\documentclass[11pt]{amsart}

\usepackage[margin=1.2in]{geometry}
\usepackage{mathtools}
\usepackage{dsfont}
\usepackage{setspace}

\newtheorem{theorem}{Theorem}[section]
\newtheorem{lemma}[theorem]{Lemma}

\newtheorem{proposition}[theorem]{Proposition}

\theoremstyle{definition}
\newtheorem{definition}[theorem]{Definition}
\newtheorem{example}[theorem]{Example}

\theoremstyle{remark}
\newtheorem{remark}[theorem]{Remark}

\numberwithin{equation}{section}

\begin{document}

\title[A  derivative concept with respect to an arbitrary kernel]{A  derivative concept with respect to an arbitrary kernel  and applications to fractional calculus}

\author[M. Jleli]{Mohamed Jleli}

\address[M. Jleli]{Department of Mathematics, College of Science, King Saud University, P.O. Box 2455, Riyadh, 11451, Saudi Arabia}
\email{jleli@ksu.edu.sa}

\author[M. Kirane]{Mokhtar Kirane}
\address[M. Kirane]{LaSIE, Facult\'e des Sciences et Technologies, Universit\'e de La Rochelle, La Rochelle Cedex France\newline
NAAM Research Group, Department of Mathematics, Faculty of Science, King Abdulaziz University, Jeddah, Saudi Arabia}
\email{mkirane@univ-lr.fr}

\author[B. Samet]{Bessem Samet}
\address[B. Samet]{Department of Mathematics, College of Science, King Saud University, P.O. Box 2455, Riyadh, 11451, Saudi Arabia}
\email{bsamet@ksu.edu.sa}

\subjclass[2010]{47B34; 26A24; 26A33}



\keywords{Conjugate kernels; $k$-derivative; fractional calculus; boundary value problem}

\begin{abstract}
In this paper,  we propose a new concept of  derivative  with respect to an arbitrary  kernel-function.  Several properties related to this new operator, like inversion rules, integration by parts, etc. are studied. In particular, we introduce the notion of conjugate kernels, which will be useful to guaranty that the proposed derivative operator admits a right inverse. The proposed concept includes as special cases Riemann-Liouville fractional derivatives, Hadamard fractional derivatives, and many other fractional operators. Moreover, using our concept, new fractional operators involving certain special functions  are introduced, and some of their properties are studied. Finally, an existence result for a boundary value problem involving the introduced derivative operator is proved.  
\end{abstract}

\maketitle

\section{Introduction}

In ordinary calculus, differentiation and integration are regarded as discrete operations, in the sense that we differentiate or integrate a function once, twice, or any whole number of times. The natural extension is to try to present new definitions for derivatives and integrals with arbitrary real order $\alpha>0$,  in which the standard definitions are just particular cases. This subject is known in the literature as fractional calculus, and  it goes back to Leibniz \cite{LE} in 1695 when, in a note sent to L'Hospital, he discussed the meaning of the derivative of order $\frac{1}{2}$. That note represented in fact the birth of  the theory of integrals and derivatives of an arbitrary order. For more than two centuries,  this theory has been treated as a purely theoretical mathematical field, and many mathematicians, such  as Liouville  \cite{LI}, Gr\"{u}nwald \cite{GR}, Letnikov \cite{LET}, Marchaud \cite{MA} and Riemann \cite{RI}, have developed this field of research by introducing new definitions and studying their most important properties. 
However, in the past decades, this subject has proven to be useful in many   areas of physics and engineering, such as image processing \cite{GH}, fluid mechanics \cite{KU}, viscoelasticity \cite{BA,FA,MAI}, stochastic processes \cite{CO,TI},   pollution  phenomena \cite{JKS}, geology \cite{AB}, thermal conductivity \cite{KJB}, turbulent flows \cite{GNN}, etc.

The classical fractional calculus is based  on the well-known Riemann-Liouville fractional integrals 
$$
(I_a^\alpha f)(x)=\frac{1}{\Gamma(\alpha)}\int_0^x(x-y)^{\alpha-1}f(y)\,dy,\quad \alpha>0
$$ 
and derivatives of order $\alpha$
$$
(D_0^\alpha f)(x)=\left(\frac{d}{dx}\right)^n (I_0^{n-\alpha} f)(x),
$$
where $\Gamma$ is the Gamma function and $n=[\alpha]+1$, or on the  Erd\'elyi-Kober operators as their immediate generalizations. For more details on these fractional operators and their properties, we refer the reader to Kilbas et al. \cite{KST},  Samko et al. \cite{SKM} and Sneddon \cite{SN}. Due to the importance of fractional calculus in applications, different definitions of fractional operators were introduced by many authors, see for example \cite{AG,AL,ABA,CA,GA,HA,HI,KI0}.  On the other hand, one of the important criteria of a fractional derivative is its dependence on a certain parameter. Moreover, one should get standard derivative of integer order for  integer values of the parameter, see for example  \cite{OM}. In \cite{TA}, Tarasov demonstrated that a violation of the Leibniz rule is a characteristic property of derivatives of fractional orders.

Motivated by the above cited works, a new approach on integrals and derivatives with respect to arbitrary kernels is proposed in this paper. We, first, introduce the notion of conjugate kernels. Next, we define left-sided and right-sided $k$-integral operators and study their properties.  Further, we introduce  left-sided and right-sided $k'$-derivative operators, where $\mbox{conj}(k')$, the set of kernel- functions $k$ such that $k$ and $k'$ are conjugate, is supposed to be nonempty. The relation between $k'$-derivatives and $k$-integrals, where $k\in \mbox{conj}(k')$,  is discussed, and many other properties,  like inversion rules, integration by parts, etc. are established. Moreover, using the introduced concepts, new fractional operators are defined and some of their properties are derived.  We give also an existence result for a boundary value problem involving $k'$-derivative.

The rest of the paper is organized as follows. In Section \ref{sec2}, we first define the set of kernels $\mathcal{K}_\omega$ that depends on an arbitrary non-negative weight function $\omega$ satisfying $\omega, \omega^{-1}\in L^\infty([a,b];\mathbb{R})$, $(a,b)\in \mathbb{R}^2$, $a<b$.
Next, we introduce the concept of conjugate kernels. As we shall see later, this notion  is very important in the proof of the inversion relations between integrals and derivatives with respect to arbitrary kernels. In Section \ref{sec3}, we introduce the class of $k$-integral operators $I_a^k$ and $I_b^k$, where $k$ is an arbitrary kernel that belongs to the set $\mathcal{K}_\omega$, and establish some properties related to such operators. In Section \ref{sec4}, we introduce $k'$-derivative operators $D_a^{k'}$ and $D_b^{k'}$, where $\mbox{conj}(k')\neq\emptyset$. Several properties related to such operators are established. Moreover, we show that for particular choices of $k'$, we obtain Riemann-Liouville and Hadamard fractional derivatives. In Section \ref{sec5}, using the introduced concepts, we define new fractional operators involving exponential integral functions and regularized lower Gamma functions. Several properties for such operators are deduced.
Finally, in Section \ref{sec6},  an existence result for a boundary value problem involving $k'$-derivative is established.

\section{Preliminaries}\label{sec2}

Let $\mathbb{R}_+=]0,+\infty[$ and  $(a,b)\in \mathbb{R}^2$ be such that  $a<b$. We denote by $\Omega$ the subset of $[a,b]\times [a,b]$ defined by
$$
\Omega=\{(x,y)\in [a,b]\times [a,b]\mbox{ such that } x> y\}.
$$
We introduce the set of weight functions
$$
\mathcal{W}=\left\{\omega: [a,b]\to \mathbb{R}_+ \mbox{ such that } \omega, \omega^{-1}\in L^\infty([a,b];\mathbb{R})\right\},
$$
where the space $L^\infty([a,b];\mathbb{R})$ is constructed from the space of measurable functions, bounded almost everywhere.

Let $\omega\in \mathcal{W}$ be a fixed weight function. For a given kernel-function $k: \Omega \to \mathbb{R}_+$, let
$$
F_k: y\mapsto \int_y^b k(x,y)\omega(x)\,dx \quad\mbox{and}\quad G_k: y\mapsto \int_a^y k(y,x)\omega(x)\,dx.
$$
We denote by $\mathcal{K}_\omega$ the set of kernel-functions 
defined by
$$
\mathcal{K}_\omega=\left\{k: \Omega\to \mathbb{R}_+ \mbox{ such that } F_k\in L^\infty([a,b[;\mathbb{R}),\, 
G_k\in L^\infty(]a,b];\mathbb{R})\right\}.
$$
For $(k_1,k_2)\in \mathcal{K}_\omega\times \mathcal{K}_\omega$, let 
$$
\delta_{k_1,k_2}(x,y)=\int_y^x k_1(x,z)k_2(z,y)\omega(z)\,dz,\quad (x,y)\in \Omega.
$$
We introduce the binary relation $\mathcal{R}$ in $\mathcal{K}_\omega$  by
$$
(k_1,k_2)\in \mathcal{K}_\omega\times \mathcal{K}_\omega,\quad k_1\mathcal{R} k_2 \Longleftrightarrow 0<\delta_{k_1,k_2}(x,y)<\infty,\, (x,y)\in \Omega.
$$
\begin{proposition}\label{PR1}
Let $(k_1,k_2)\in \mathcal{K}_\omega\times \mathcal{K}_\omega$ be such that $k_1\mathcal{R} k_2$. Then 
$$
\delta_{k_1,k_2}\in \mathcal{K}_\omega.
$$
\end{proposition}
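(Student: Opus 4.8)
The plan is to verify directly the two defining conditions of $\mathcal{K}_\omega$ for the kernel $\delta := \delta_{k_1,k_2}$, namely that $F_\delta\in L^\infty([a,b[;\mathbb{R})$ and $G_\delta\in L^\infty(]a,b];\mathbb{R})$. Note first that, since $k_1\mathcal{R}k_2$, we have $0<\delta(x,y)<\infty$ on $\Omega$, so $\delta$ is a genuine kernel-function $\Omega\to\mathbb{R}_+$; it therefore remains only to control its two weighted marginals $F_\delta$ and $G_\delta$.

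For the first condition, I would compute $F_\delta(y)$ by inserting the definition of $\delta$ and interchanging the order of integration. Explicitly, for $y\in[a,b[$,
$$
F_\delta(y)=\int_y^b\left(\int_y^x k_1(x,z)k_2(z,y)\omega(z)\,dz\right)\omega(x)\,dx,
$$
and since the integrand is non-negative, Tonelli's theorem permits swapping the integrals over the region $\{(x,z):y<z<x<b\}$, yielding
$$
F_\delta(y)=\int_y^b k_2(z,y)\omega(z)\left(\int_z^b k_1(x,z)\omega(x)\,dx\right)dz=\int_y^b k_2(z,y)\omega(z)F_{k_1}(z)\,dz.
$$
Bounding $F_{k_1}(z)\le\|F_{k_1}\|_\infty$ for a.e.\ $z$ (finite because $k_1\in\mathcal{K}_\omega$) gives $F_\delta(y)\le\|F_{k_1}\|_\infty\,F_{k_2}(y)$, and since $F_{k_2}\in L^\infty([a,b[)$ I conclude $F_\delta\in L^\infty([a,b[)$ with $\|F_\delta\|_\infty\le\|F_{k_1}\|_\infty\|F_{k_2}\|_\infty$.

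The second condition is handled symmetrically. For $y\in\,]a,b]$ one has $\delta(y,x)=\int_x^y k_1(y,z)k_2(z,x)\omega(z)\,dz$, so
$$
G_\delta(y)=\int_a^y\left(\int_x^y k_1(y,z)k_2(z,x)\omega(z)\,dz\right)\omega(x)\,dx.
$$
Applying Tonelli over $\{(x,z):a<x<z<y\}$ yields $G_\delta(y)=\int_a^y k_1(y,z)\omega(z)G_{k_2}(z)\,dz$, and bounding $G_{k_2}(z)\le\|G_{k_2}\|_\infty$ gives $G_\delta(y)\le\|G_{k_2}\|_\infty\,G_{k_1}(y)$, whence $G_\delta\in L^\infty(]a,b])$ using $G_{k_1}\in L^\infty(]a,b])$.

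The computations themselves are routine; the only step requiring genuine care is the interchange of the order of integration. Because $k_1$, $k_2$ and $\omega$ are all non-negative, Tonelli's theorem applies with no integrability hypothesis and simultaneously guarantees measurability of the resulting marginals, so the main thing to get right is simply the correct description of the regions of integration (and hence the new limits) after each swap. The hypothesis $k_1\mathcal{R}k_2$ plays the role of ensuring that $\delta$ is everywhere finite and positive on $\Omega$, so that $\delta$ already qualifies as a kernel-function $\Omega\to\mathbb{R}_+$ before the $L^\infty$ bounds are invoked.
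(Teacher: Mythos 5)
Your proof is correct and follows essentially the same route as the paper: interchange the order of integration to get $F_\delta(y)=\int_y^b k_2(z,y)\omega(z)F_{k_1}(z)\,dz\le\|F_{k_1}\|_\infty F_{k_2}(y)$ and the symmetric identity for $G_\delta$. The only (harmless, arguably cleaner) difference is that you invoke Tonelli for non-negative integrands where the paper cites Fubini.
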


\begin{proof}
First, by the definition of the binary relation $\mathcal{R}$, the function
$$
\delta_{k_1,k_2}: \Omega\to \mathbb{R}_+
$$
is well-defined. Let $a\leq y< b$, using Fubini's theorem, we have
\begin{eqnarray*}
\left|F_{\delta_{k_1,k_2}}(y)\right|&=&F_{\delta_{k_1,k_2}}(y)\\
&=&\int_y^b \int_y^x k_1(x,z)k_2(z,y)\omega(z)\,dz \omega(x)\,dx\\
&=& \int_y^b k_2(z,y) \omega(z)  \left(\int_z^b   k_1(x,z)\omega(x) \,dx\right)         \,dz\\
&=& \int_y^b k_2(z,y) \omega(z) F_{k_1}(z)\,dz\\
&\leq & \|F_{k_1}\|_{L^\infty([a,b[;\mathbb{R})}  \int_y^b k_2(z,y)\omega(z)\,dz\\
&=&\|F_{k_1}\|_{L^\infty([a,b[;\mathbb{R})}   F_{k_2}(y)\\
&\leq & \|F_{k_1}\|_{L^\infty([a,b[;\mathbb{R})} \|F_{k_2}\|_{L^\infty([a,b[;\mathbb{R})},
\end{eqnarray*}
which proves that $F_{\delta_{k_1,k_2}}\in L^\infty([a,b[;\mathbb{R})$. Similarly, for $ a<y\leq b$, we obtain
$$
\left|G_{\delta_{k_1,k_2}}(y)\right|\leq \|G_{k_1}\|_{L^\infty(]a,b];\mathbb{R})} \|G_{k_2}\|_{L^\infty(]a,b];\mathbb{R})},
$$
which proves that $G_{\delta_{k_1,k_2}}\in L^\infty(]a,b];\mathbb{R})$. Therefore, $\delta_{k_1,k_2}$ belongs to $ \mathcal{K}_\omega$.
\end{proof}

Now, we introduce the notion of conjugate kernels, which will play a central role later.

\begin{definition}\label{dcc}
Let $(k,k')\in \mathcal{K}_\omega\times \mathcal{K}_\omega$. We say that $k$ and $k'$ are conjugate if
$$
\delta_{k,k'}(x,y)=\delta_{k',k}(x,y)=1,\quad (x,y)\in \Omega.
$$ 
\end{definition}

Given $k'\in \mathcal{K}_\omega$, let 
$$
\mbox{conj}(k')=\{k\in \mathcal{K}_\omega \mbox{ such that } k \mbox{ and }k' \mbox{ are conjugate}\}.
$$
Clearly, we have
$$
k\in \mbox{conj}(k') \Longleftrightarrow k'\in \mbox{conj}(k).
$$

Let us provide some examples of conjugate kernels.

\begin{example}\label{exconjRL}
Let $0<\alpha<1$ and  
$$
\omega(x)=1,\quad a\leq x\leq b.
$$
We define the kernel-functions $k_\alpha$ and $k'_\alpha$ by
$$
k_\alpha(x,y)=\frac{(x-y)^{\alpha-1} }{\Gamma(\alpha)},\quad (x,y)\in \Omega
$$
and 
$$
k'_\alpha(x,y)=\frac{(x-y)^{-\alpha} }{\Gamma(1-\alpha)},\quad (x,y)\in \Omega.
$$
Then $k_\alpha, k'_\alpha\in \mathcal{K}_\omega$. Moreover, $k_\alpha$ and $k'_\alpha$ are conjugate.
\end{example}

\begin{example}\label{exconjHD}
Let $(a,b)\in \mathbb{R}^2$ be such that $0<a<b$. Let $0<\alpha<1$ and
$$
\omega(x)=\frac{1}{x},\quad x\in [a,b].
$$
We define the kernel-functions $k_\alpha$ and $k'_\alpha$ by
$$
k_\alpha(x,y)=\frac{1}{\Gamma(\alpha)} \left(\ln \frac{x}{y}\right)^{\alpha-1},\quad (x,y)\in \Omega
$$
and
$$
k'_\alpha(x,y)=\frac{1}{\Gamma(1-\alpha)} \left(\ln \frac{x}{y}\right)^{-\alpha},\quad (x,y)\in \Omega.
$$
Then $k_\alpha, k'_\alpha\in \mathcal{K}_\omega$. Moreover, $k_\alpha$ and $k'_\alpha$ are conjugate.
\end{example}

\begin{example}\label{exconjER}
Let $(a,b)\in \mathbb{R}^2$ be such that $0<a<b$, $\sigma>0$ and $0<\alpha<1$. Let
$$
\omega(x)=\sigma x^{\sigma-1},\quad x\in [a,b].
$$
We define the kernel-functions $k_\alpha$ and $k'_\alpha$ by
$$
k_\alpha(x,y)=\frac{1}{\Gamma(\alpha)} \left(x^\sigma-y^\sigma\right)^{\alpha-1},\quad (x,y)\in \Omega
$$
and
$$
k'_\alpha(x,y)=\frac{1}{\Gamma(1-\alpha)} \left(x^\sigma-y^\sigma\right)^{-\alpha},\quad (x,y)\in \Omega.
$$
Then $k_\alpha, k'_\alpha\in \mathcal{K}_\omega$. Moreover, $k_\alpha$ and $k'_\alpha$ are conjugate.
\end{example}

\begin{example}[Sonine kernels]
Let   $(a,b)=(0,1)$ and
$$
\omega(x)=1,\quad 0\leq x\leq 1.
$$
Let
$$
k_1(x,y)=\mathbb{K}_1(x-y),\quad (x,y)\in \Omega,
$$
where the kernel $\mathbb{K}_1$ is assumed to satisfy the condition that there exists another kernel 
$$
k_2(x,y)=\mathbb{K}_2(x-y),\quad (x,y)\in \Omega,
$$
such that 
\begin{equation}\label{Sonine}
\int_0^t \mathbb{K}_1(t-s)\mathbb{K}_2(s)\,ds=1,\quad t>0.
\end{equation}
Then $k_1$ and $k_2$ are conjugate. Note that \eqref{Sonine} is known in the literature as Sonine condition, which is due  to Sonine \cite{Son}.
\end{example}

\section{Integral operators involving a kernel-function}\label{sec3}

Let $\omega\in \mathcal{W}$ be a fixed weight function and $k: \Omega\to \mathbb{R}_+$ be an arbitrary kernel-function that belongs to $\mathcal{K}_\omega$. We define the $k$-integrals of a function $f\in L^1([a,b];\mathbb{R})$ as follows.

\begin{definition}
Let $f\in L^1([a,b];\mathbb{R})$. The left-sided $k$-integral of $f$ is given by
$$
(I_{a}^kf)(x)=\int_a^x k(x,y)f(y)\omega(y)\,dy,\quad \mbox{ a.e. } x\in [a,b].
$$
The right-sided $k$-integral of $f$ is given by
$$
(I_{b}^kf)(x)=\int_x^b k(y,x)f(y)\omega(y)\,dy,\quad \mbox{ a.e. } x\in [a,b].
$$
\end{definition}
Next, we shall establish some properties related to the $k$-integral operators defined above.

\begin{proposition}\label{pr1}
The left-sided $k$-integral operator 
$$
I_{a}^k: L^1([a,b];\mathbb{R})\to L^1([a,b];\mathbb{R})
$$
is  linear and continuous. Moreover, for all $f\in L^1([a,b];\mathbb{R})$, we have
\begin{equation}\label{p1}
\|I_{a}^kf\|_{L^1([a,b];\mathbb{R})}\leq  \|\omega^{-1}\|_{L^\infty([a,b];\mathbb{R})} \|\omega\|_{L^\infty([a,b];\mathbb{R})}\|F_{k}\|_{L^\infty([a,b[;\mathbb{R})}\|f\|_{L^1([a,b];\mathbb{R})}.
\end{equation}
\end{proposition}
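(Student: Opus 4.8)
The plan is to dispose of linearity immediately and devote the bulk of the argument to the estimate \eqref{p1}, from which continuity follows at once. Linearity of $I_a^k$ is inherited directly from the linearity of the integral in $f$, since $k$ and $\omega$ do not depend on $f$. For the quantitative bound I would begin from the definition and use the triangle inequality together with the non-negativity of $k$ and $\omega$ to write
$$
\|I_a^kf\|_{L^1([a,b];\mathbb{R})}=\int_a^b\left|\int_a^x k(x,y)f(y)\omega(y)\,dy\right|dx\leq \int_a^b\int_a^x k(x,y)|f(y)|\omega(y)\,dy\,dx.
$$

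The key step is to apply Tonelli's theorem to this non-negative integrand over the triangular region $\{(x,y):a\leq y\leq x\leq b\}$, exchanging the order of integration to obtain
$$
\int_a^b |f(y)|\omega(y)\left(\int_y^b k(x,y)\,dx\right)dy.
$$
Here the main obstacle — really a bookkeeping subtlety rather than a deep difficulty — is that the inner $x$-integral carries no factor $\omega(x)$, whereas $F_k(y)$ is defined \emph{with} that weight present. I would resolve this by inserting $\omega(x)\omega(x)^{-1}\equiv 1$ and bounding $\omega(x)^{-1}\leq \|\omega^{-1}\|_{L^\infty([a,b];\mathbb{R})}$, which gives $\int_y^b k(x,y)\,dx\leq \|\omega^{-1}\|_{L^\infty([a,b];\mathbb{R})}\,F_k(y)$.

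Finally I would bound $F_k(y)\leq \|F_k\|_{L^\infty([a,b[;\mathbb{R})}$, which is legitimate precisely because $k\in\mathcal{K}_\omega$, and $\omega(y)\leq \|\omega\|_{L^\infty([a,b];\mathbb{R})}$, then pull all three constants out of the remaining integral $\int_a^b|f(y)|\,dy=\|f\|_{L^1([a,b];\mathbb{R})}$. This chain of inequalities reproduces exactly \eqref{p1}. As a byproduct, the finiteness of the double integral established above justifies, through Fubini's theorem, that $(I_a^kf)(x)$ is finite for a.e. $x$ and defines an element of $L^1([a,b];\mathbb{R})$, so that $I_a^k$ genuinely maps into the asserted target space; boundedness, hence continuity of the linear operator, is then immediate from \eqref{p1}.
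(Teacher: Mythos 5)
Your proposal is correct and follows essentially the same route as the paper's proof: exchange the order of integration over the triangle, insert $\omega(x)\omega^{-1}(x)$ to recognize $F_k(y)$, and bound by the three $L^\infty$ norms. Your invocation of Tonelli (rather than Fubini) for the non-negative integrand, and your explicit remark that the finiteness of the double integral justifies that $I_a^kf$ is well-defined in $L^1$, are minor but welcome refinements of the same argument.
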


\begin{proof}
Let $f\in L^1([a,b];\mathbb{R})$. By Fubini's theorem, we have
\begin{eqnarray*}
\|I_{a}^kf\|_{L^1([a,b];\mathbb{R})}&=&\int_a^b |(I_{a}^kf)(x)|\,dx\\
&\leq & \int_a^b \int_a^x k(x,y)|f(y)|\omega(y)\,dy\,dx\\
&=& \int_a^b |f(y)| \omega(y)\left(\int_y^b k(x,y)\,dx\right)\,dy\\
&=& \int_a^b |f(y)| \omega(y)\left(\int_y^b \omega^{-1}(x)k(x,y)\omega(x)\,dx\right)\,dy\\
&\leq & \|\omega^{-1}\|_{L^\infty([a,b];\mathbb{R})} 
\int_a^b |f(y)|\omega(y) F_k(y)\,dy\\
&\leq &   \|\omega^{-1}\|_{L^\infty([a,b];\mathbb{R})} \|\omega\|_{L^\infty([a,b];\mathbb{R})}\|F_{k}\|_{L^\infty([a,b[;\mathbb{R})}\|f\|_{L^1([a,b];\mathbb{R})},
\end{eqnarray*}
which proves \eqref{p1}.
\end{proof}

Similarly, we have the following property for the right-sided $k$-integral operator.

\begin{proposition}\label{pr1b}
The right-sided $k$-integral operator 
$$
I_{b}^k: L^1([a,b];\mathbb{R})\to L^1([a,b];\mathbb{R})
$$
is  linear and continuous. Moreover, for all $f\in L^1([a,b];\mathbb{R})$, we have
$$
\|I_{b}^kf\|_{L^1([a,b];\mathbb{R})}\leq  \|\omega^{-1}\|_{L^\infty([a,b];\mathbb{R})} \|\omega\|_{L^\infty([a,b];\mathbb{R})}\|G_{k}\|_{L^\infty(]a,b];\mathbb{R})}\|f\|_{L^1([a,b];\mathbb{R})}.
$$
\end{proposition}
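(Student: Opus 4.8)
The plan is to mirror the proof of Proposition~\ref{pr1} almost verbatim, exchanging the roles played by the functions $F_k$ and $G_k$ and reversing the orientation of the iterated integral. Recall that the right-sided operator is defined by $(I_b^k f)(x)=\int_x^b k(y,x)f(y)\omega(y)\,dy$, so the inner integration runs over the variable $y$ with $y>x$, and the outer $L^1$-integration runs over $x$. First I would write $\|I_b^k f\|_{L^1}=\int_a^b |(I_b^k f)(x)|\,dx$ and bound the absolute value by passing it inside both integrals, using that $k\geq 0$ and $\omega>0$, to obtain $\int_a^b \int_x^b k(y,x)|f(y)|\omega(y)\,dy\,dx$.

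The key step is the application of Fubini's theorem to interchange the order of integration. The region of integration is $\{(x,y): a\le x\le b,\ x\le y\le b\}=\{(x,y): a\le y\le b,\ a\le x\le y\}$, so after swapping we integrate $x$ from $a$ to $y$ inside and $y$ from $a$ to $b$ outside. This yields $\int_a^b |f(y)|\omega(y)\left(\int_a^y k(y,x)\,dx\right)\,dy$. To bring in the weight factor $\omega$ appearing in the definition of $G_k$, I would insert $\omega^{-1}(x)\omega(x)=1$ into the inner integral, writing $\int_a^y k(y,x)\,dx=\int_a^y \omega^{-1}(x) k(y,x)\omega(x)\,dx$, and then bound $\omega^{-1}(x)$ by its essential supremum. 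Pulling this constant out identifies the remaining inner integral as exactly $G_k(y)=\int_a^y k(y,x)\omega(x)\,dx$.

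It then remains to estimate $\int_a^b |f(y)|\omega(y) G_k(y)\,dy$ by bounding $\omega(y)\le \|\omega\|_{L^\infty}$ and $G_k(y)\le \|G_k\|_{L^\infty(]a,b];\mathbb{R})}$, leaving the factor $\int_a^b |f(y)|\,dy=\|f\|_{L^1}$. Collecting the three essential-supremum constants gives precisely the stated inequality, which simultaneously establishes that $I_b^k$ maps $L^1([a,b];\mathbb{R})$ into itself and is continuous; linearity is immediate from the linearity of the integral in $f$.

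I do not anticipate a genuine obstacle here, since the argument is structurally identical to the left-sided case; the only point requiring care is the correct description of the swapped domain of integration under Fubini, so that the inner integral genuinely reproduces $G_k(y)$ rather than $F_k(y)$. Since $k\in\mathcal{K}_\omega$ guarantees $G_k\in L^\infty(]a,b];\mathbb{R})$, and $\omega,\omega^{-1}\in L^\infty$ by the hypothesis $\omega\in\mathcal{W}$, every supremum invoked is finite, so the bound is meaningful and the proof is complete.
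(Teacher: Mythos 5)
Your proposal is correct and is exactly the argument the paper intends: the paper omits this proof, remarking only that it is similar to the left-sided case, and your adaptation (Fubini over the region $\{a\le x\le y\le b\}$, inserting $\omega^{-1}(x)\omega(x)$ to recognize the inner integral as $G_k(y)$, then bounding by the three essential suprema) mirrors the proof of Proposition~\ref{pr1} precisely as the authors suggest.
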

The proof of the above result is similar to that of Proposition \ref{PR1}, so we omit it.

Further,  we calculate $I_a^k f$ and $I_b^kf$ for some particular functions $f\in L^1([a,b];\mathbb{R})$. First, we consider the case $f\equiv C$, where $C\in \mathbb{R}$ is a certain constant. Then, we have
$$
(I_{a}^kf)(x)=C \int_a^x k(x,y)\omega(y)\,dy,\quad \mbox{ a.e. } x\in [a,b].
$$ 
Let us introduce the unit kernel-function $\mathds{1}:\Omega\to \mathbb{R}_+$  defined by
\begin{equation}\label{onef}
\mathds{1}(x,y)=1,\quad (x,y)\in \Omega.
\end{equation}
It can be  seen that $\mathds{1}\in \mathcal{K}_\omega$. 
On the other hand, we have
\begin{eqnarray*}
\int_a^x k(x,y)\omega(y)\,dy&=& \int_a^x k(x,z)\mathds{1}(z,a)\omega(z)\,dz\\
&=&\delta_{k,\mathds{1}}(x,a),\quad \mbox{ a.e. } x\in [a,b].
\end{eqnarray*}
Therefore, we have the following result.

\begin{proposition}\label{ex1} 
Let $f\equiv C$, where $C\in \mathbb{R}$ is a certain constant.  Then
$$
(I_{a}^kf)(x)= C \delta_{k,\mathds{1}}(x,a),\quad \mbox{ a.e. }x\in [a,b],
$$
where $\mathds{1}$ is the unit kernel-function defined by \eqref{onef}.
\end{proposition}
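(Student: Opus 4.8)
The plan is to read the identity directly off the definitions, since substituting $f\equiv C$ reduces the left-sided $k$-integral to precisely the quantity $\delta_{k,\mathds{1}}(x,a)$. First I would start from the definition of $I_a^k$, which for the constant function $f\equiv C$ gives
$$
(I_{a}^kf)(x)=\int_a^x k(x,y)\,C\,\omega(y)\,dy=C\int_a^x k(x,y)\omega(y)\,dy,\quad \mbox{a.e. } x\in [a,b],
$$
where $C$ is pulled out by linearity of the integral. This is the display already recorded in the text immediately preceding the statement.

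Next I would recognize the remaining integral as an instance of $\delta_{k,\mathds{1}}$. Since $\mathds{1}(z,a)=1$ for every $(z,a)\in \Omega$, inserting this factor changes nothing, and after renaming the integration variable $y$ to $z$ one obtains
$$
\int_a^x k(x,y)\omega(y)\,dy=\int_a^x k(x,z)\mathds{1}(z,a)\omega(z)\,dz=\delta_{k,\mathds{1}}(x,a),\quad \mbox{a.e. } x\in [a,b],
$$
which is exactly the computation displayed before the proposition. Combining the two displays yields $(I_{a}^kf)(x)=C\,\delta_{k,\mathds{1}}(x,a)$ for a.e. $x\in[a,b]$, as claimed.

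The only point deserving a word of justification is that every object in the final identity is well-defined almost everywhere. For this I would invoke that $\mathds{1}\in \mathcal{K}_\omega$, as noted in the text, so that $\delta_{k,\mathds{1}}$ is a legitimate expression; and, to guarantee finiteness, I would apply Proposition \ref{pr1} to the particular constant function corresponding to $C=1$, whose $k$-integral is exactly $x\mapsto \delta_{k,\mathds{1}}(x,a)$ and which therefore lies in $L^1([a,b];\mathbb{R})$ and is finite for a.e. $x$. In truth there is no genuine obstacle here: the statement is an immediate unpacking of the definitions of $I_a^k$ and $\delta_{k,\mathds{1}}$, and the substantive content lies entirely in having introduced the unit kernel $\mathds{1}$ and verified its membership in $\mathcal{K}_\omega$.
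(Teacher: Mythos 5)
Your proof is correct and follows exactly the paper's own argument, which is the two-line computation displayed immediately before the proposition: pull the constant $C$ out of the integral and identify $\int_a^x k(x,z)\mathds{1}(z,a)\omega(z)\,dz$ with $\delta_{k,\mathds{1}}(x,a)$ using $\mathds{1}\in\mathcal{K}_\omega$. The extra remark on a.e.\ finiteness via Proposition \ref{pr1} is a harmless addition the paper leaves implicit.
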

Similarly, we have
\begin{proposition}\label{ex1b} 
Let $f\equiv C$, where $C\in \mathbb{R}$ is a certain constant.  Then
$$
(I_{b}^kf)(x)= C \delta_{\mathds{1},k}(b,x),\quad \mbox{ a.e. }x\in [a,b].
$$
\end{proposition}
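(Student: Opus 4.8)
The plan is to carry out the same elementary computation used just before Proposition \ref{ex1}, now adapted to the right-sided operator. First I would insert $f\equiv C$ directly into the definition of the right-sided $k$-integral, obtaining
$$
(I_{b}^kf)(x)=\int_x^b k(y,x)\, C\, \omega(y)\,dy= C\int_x^b k(y,x)\omega(y)\,dy,\quad \mbox{ a.e. } x\in [a,b].
$$
The goal is then to identify the remaining integral with $\delta_{\mathds{1},k}(b,x)$.

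Next I would evaluate $\delta_{\mathds{1},k}$ at the point $(b,x)$, which lies in $\Omega$ for a.e. $x\in[a,b]$ since $x<b$ there. Using the definition of $\delta_{k_1,k_2}$ with $k_1=\mathds{1}$ and $k_2=k$, together with $\mathds{1}(b,z)=1$, I obtain
$$
\delta_{\mathds{1},k}(b,x)=\int_x^b \mathds{1}(b,z)k(z,x)\omega(z)\,dz=\int_x^b k(z,x)\omega(z)\,dz.
$$
Renaming the dummy variable $z$ to $y$, this is exactly the integral appearing above, whence $(I_{b}^kf)(x)=C\,\delta_{\mathds{1},k}(b,x)$ a.e., as claimed.

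There is no genuine obstacle here; the computation is a one-line substitution, and no appeal to Fubini or to the binary relation $\mathcal{R}$ is needed. The only point requiring a moment's care is the placement of the variables in the right-sided kernel: in $I_{b}^k$ the kernel is evaluated as $k(y,x)$ with the integration variable in the first (larger) slot, and correspondingly in $\delta_{\mathds{1},k}(b,x)$ the kernel $k$ sits as the second factor $k(z,x)$, so that the outer argument $b$ is absorbed by the trivial factor $\mathds{1}(b,z)=1$. Matching these two conventions is what makes the right endpoint $b$ appear as the \emph{first} argument of $\delta$, in contrast to the left-sided identity of Proposition \ref{ex1}, where the left endpoint $a$ appears as the \emph{second} argument.
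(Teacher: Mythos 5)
Your computation is correct and is exactly the argument the paper uses (the paper derives the left-sided identity of Proposition \ref{ex1} by the same one-line substitution and states Proposition \ref{ex1b} as the ``similar'' right-sided analogue, omitting the details). Your careful matching of the variable slots in $k(y,x)$ versus $\mathds{1}(b,z)k(z,x)$ is precisely the point that makes $b$ appear as the first argument of $\delta_{\mathds{1},k}$, so nothing is missing.
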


Next, we consider the case 
$$
f(x)=k'(x,a),\quad \mbox{ a.e. } x\in [a,b],
$$
where $k'\in \mathcal{K}_\omega$ is a certain kernel-function. Observe that 
\begin{eqnarray*}
\int_a^b |f(x)|\,dx &=& \int_a^b k'(x,a)\,dx\\
&=& \int_a^b k'(x,a)\omega(x) \omega^{-1}(x)\,dx\\
&\leq & \|\omega^{-1}\|_{L^\infty([a,b];\mathbb{R})} \int_a^b k'(x,a)\omega(x) \,dx\\
&=&\|\omega^{-1}\|_{L^\infty([a,b];\mathbb{R})} F_{k'}(a)\\
&\leq & \|\omega^{-1}\|_{L^\infty([a,b];\mathbb{R})} \|F_{k'}\|_{L^\infty([a,b[;\mathbb{R})},
\end{eqnarray*}
which proves that $f\in L^1([a,b];\mathbb{R})$. On the other hand, for a.e. $x\in [a,b]$, we have
\begin{eqnarray*}
(I_a^{k}f)(x)&=& \int_a^x k(x,z) f(z)\omega(z)\,dz\\
&=& \int_a^x k(x,z) k'(z,a)\omega(z)\,dz\\
&=&\delta_{k,k'}(x,a).
\end{eqnarray*}
Therefore, we have the following result.
\begin{proposition}\label{ex2}
Let $f: [a,b]\to \mathbb{R}$ be the function given by
$$
f(x)=k'(x,a),\quad \mbox{ a.e. } x\in [a,b],
$$
where $k'\in \mathcal{K}_\omega$.  Then $f\in L^1([a,b];\mathbb{R})$. Moreover, we have 
$$
(I_a^{k}f)(x)=\delta_{k,k'}(x,a),\quad \mbox{ a.e. } x\in [a,b].
$$
\end{proposition}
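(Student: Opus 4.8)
The plan is to verify the two assertions separately, each being a direct consequence of the definitions together with the membership $k'\in\mathcal{K}_\omega$ and the fact that $\omega\in\mathcal{W}$.

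First I would establish integrability. Since $k'$ takes values in $\mathbb{R}_+$, the function $f$ is non-negative, so $\int_a^b|f(x)|\,dx=\int_a^b k'(x,a)\,dx$. The idea is to insert the factor $\omega(x)\omega^{-1}(x)=1$ into the integrand and then bound $\omega^{-1}(x)$ pointwise by $\|\omega^{-1}\|_{L^\infty([a,b];\mathbb{R})}$, which is finite precisely because $\omega\in\mathcal{W}$. This reduces the integral to $\|\omega^{-1}\|_{L^\infty([a,b];\mathbb{R})}\int_a^b k'(x,a)\omega(x)\,dx$, and the remaining integral is exactly $F_{k'}(a)$ by the definition of the function $F_{k'}$. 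Finally, since $k'\in\mathcal{K}_\omega$ guarantees $F_{k'}\in L^\infty([a,b[;\mathbb{R})$, one bounds $F_{k'}(a)\leq\|F_{k'}\|_{L^\infty([a,b[;\mathbb{R})}<\infty$, which yields $f\in L^1([a,b];\mathbb{R})$.

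Second, for the identity I would simply substitute $f(z)=k'(z,a)$ into the definition of the left-sided $k$-integral. For a.e. $x\in[a,b]$ this gives $(I_a^kf)(x)=\int_a^x k(x,z)k'(z,a)\omega(z)\,dz$, which is by definition $\delta_{k,k'}(x,a)$. No further argument is needed once $f\in L^1([a,b];\mathbb{R})$ has been verified, so that $I_a^kf$ is well-defined almost everywhere.

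The computation is entirely mechanical, so there is no genuine obstacle; the only step that uses the hypotheses rather than bare substitution is the integrability estimate, which hinges on the defining property of $\mathcal{K}_\omega$ (namely $F_{k'}\in L^\infty$) together with $\omega,\omega^{-1}\in L^\infty$. Once these are invoked, both conclusions follow at once by reading off the definitions of $F_{k'}$ and $\delta_{k,k'}$.
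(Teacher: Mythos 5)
Your proposal is correct and follows essentially the same route as the paper's own argument: the integrability is obtained by inserting $\omega(x)\omega^{-1}(x)$, bounding $\omega^{-1}$ by its $L^\infty$ norm, and recognizing $F_{k'}(a)\leq\|F_{k'}\|_{L^\infty([a,b[;\mathbb{R})}$, while the identity is a direct substitution into the definition of $I_a^{k}$ and of $\delta_{k,k'}$. Nothing further is needed.
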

Similarly, we have
\begin{proposition}\label{ex2b}
Let $f: [a,b]\to \mathbb{R}$ be the function given by
$$
f(x)=k'(b,x),\quad \mbox{ a.e. } x\in [a,b],
$$
where $k'\in \mathcal{K}_\omega$.  Then $f\in L^1([a,b];\mathbb{R})$. Moreover, we have 
$$
(I_b^{k}f)(x)=\delta_{k',k}(b,x),\quad \mbox{ a.e. } x\in [a,b].
$$
\end{proposition}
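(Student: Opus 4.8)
The plan is to mirror the argument used for Proposition \ref{ex2}, interchanging the roles of $F_{k'}$ and $G_{k'}$, since we are now working with the right-sided operator $I_b^k$ rather than the left-sided one.

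First I would verify that $f\in L^1([a,b];\mathbb{R})$. Starting from $\int_a^b |f(x)|\,dx=\int_a^b k'(b,x)\,dx$, I would insert the factor $\omega(x)\omega^{-1}(x)$ and pull out $\|\omega^{-1}\|_{L^\infty([a,b];\mathbb{R})}$, recognizing the remaining integral $\int_a^b k'(b,x)\omega(x)\,dx$ as precisely $G_{k'}(b)$ by the definition of $G_{k'}$. Since $k'\in\mathcal{K}_\omega$ guarantees $G_{k'}\in L^\infty(]a,b];\mathbb{R})$, this quantity is bounded by $\|\omega^{-1}\|_{L^\infty([a,b];\mathbb{R})}\,\|G_{k'}\|_{L^\infty(]a,b];\mathbb{R})}$, which establishes integrability.

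The second step is a direct computation. By the definition of the right-sided $k$-integral,
$$
(I_b^k f)(x)=\int_x^b k(y,x)f(y)\omega(y)\,dy=\int_x^b k(y,x)k'(b,y)\omega(y)\,dy.
$$
Relabeling the dummy variable $y$ as $z$ and comparing with
$$
\delta_{k',k}(b,x)=\int_x^b k'(b,z)k(z,x)\omega(z)\,dz
$$
shows that the two expressions coincide, which is the desired identity.

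The only point requiring care is keeping the kernel arguments in the correct order: in the right-sided setting the integrand is $k(y,x)$, with the variable of integration as the \emph{first} argument, and the definition of $\delta$ places its first kernel first, so the matching is with $\delta_{k',k}(b,x)$ rather than with $\delta_{k,k'}$. Apart from this bookkeeping I do not expect any genuine obstacle, as the argument is entirely parallel to that of Proposition \ref{ex2}.
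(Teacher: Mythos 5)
Your proposal is correct and is exactly the ``similar'' argument the paper intends for Proposition \ref{ex2b}: integrability via $\int_a^b k'(b,x)\,dx \le \|\omega^{-1}\|_{L^\infty([a,b];\mathbb{R})}\,G_{k'}(b) \le \|\omega^{-1}\|_{L^\infty([a,b];\mathbb{R})}\,\|G_{k'}\|_{L^\infty(]a,b];\mathbb{R})}$, mirroring the use of $F_{k'}(a)$ in Proposition \ref{ex2}, followed by the direct identification $\int_x^b k(z,x)k'(b,z)\omega(z)\,dz=\delta_{k',k}(b,x)$. The kernel-argument bookkeeping you flag is handled correctly, so nothing further is needed.
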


Further, we study the composition between   $k_i$-integral operators, $i=1,2$, where $(k_1,k_2)\in \mathcal{K}_\omega\times \mathcal{K}_\omega$. We have the following result.

\begin{theorem}\label{pr2}
Let $(k_1,k_2)\in \mathcal{K}_\omega\times \mathcal{K}_\omega$ be such that $k_1\mathcal{R}k_2$. Given a function $f\in L^1([a,b];\mathbb{R})$, we have
\begin{equation}\label{p2}
I_a^{k_1}\left(I_a^{k_2} f\right)(x)=\left(I_a^{k_3} f\right)(x),\quad \mbox{ a.e. }x\in [a,b],
\end{equation}
where $k_3=\delta_{k_1,k_2}$.
\end{theorem}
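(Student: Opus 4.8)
The plan is to compute the left-hand side of \eqref{p2} by unfolding the two integral operators into an iterated integral and then interchanging the order of integration so that the inner integral reproduces the kernel $\delta_{k_1,k_2}$. Concretely, for a.e. $x\in[a,b]$ I would write
$$
I_a^{k_1}\left(I_a^{k_2} f\right)(x)=\int_a^x k_1(x,z)\left(\int_a^z k_2(z,y)f(y)\omega(y)\,dy\right)\omega(z)\,dz,
$$
where the inner expression is $(I_a^{k_2}f)(z)$. The region of integration is the triangle $\{(y,z):a\le y\le z\le x\}$. Swapping the order, for fixed $y$ the variable $z$ runs over $[y,x]$ while $y$ runs over $[a,x]$, so the iterated integral becomes
$$
\int_a^x f(y)\omega(y)\left(\int_y^x k_1(x,z)k_2(z,y)\omega(z)\,dz\right)dy=\int_a^x k_3(x,y)f(y)\omega(y)\,dy=(I_a^{k_3}f)(x),
$$
using the very definition $k_3(x,y)=\delta_{k_1,k_2}(x,y)=\int_y^x k_1(x,z)k_2(z,y)\omega(z)\,dz$.

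The only real content is to justify the interchange of integrals, i.e. to invoke Fubini's theorem legitimately. Since all the kernels and $\omega$ are non-negative, Tonelli's theorem applies unconditionally to $|f|$ and yields
$$
\int_a^x k_1(x,z)\left(\int_a^z k_2(z,y)|f(y)|\omega(y)\,dy\right)\omega(z)\,dz=\int_a^x \delta_{k_1,k_2}(x,y)|f(y)|\omega(y)\,dy=(I_a^{k_3}|f|)(x).
$$
Thus I must check that the right-hand side is finite for a.e. $x$. Here the hypotheses pay off: since $k_1\mathcal{R}k_2$, Proposition \ref{PR1} guarantees $k_3=\delta_{k_1,k_2}\in\mathcal{K}_\omega$, so by Proposition \ref{pr1} the operator $I_a^{k_3}$ maps $L^1([a,b];\mathbb{R})$ into itself; as $|f|\in L^1([a,b];\mathbb{R})$, the function $I_a^{k_3}|f|$ is integrable and hence finite a.e. This establishes the a.e. finiteness of the majorizing iterated integral, which is exactly the condition needed to apply Fubini's theorem to the signed integrand.

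The main obstacle is therefore bookkeeping rather than a deep difficulty: I need to confirm that the left-hand side of \eqref{p2} is itself well-defined a.e. before manipulating it, which follows because $I_a^{k_2}f\in L^1([a,b];\mathbb{R})$ by Proposition \ref{pr1}, so $I_a^{k_1}$ may be applied to it and $I_a^{k_1}(I_a^{k_2}f)\in L^1([a,b];\mathbb{R})$. Once absolute convergence is secured from the Tonelli step, Fubini licenses the reordering for a.e. $x$, and the computation above closes the proof. A symmetric argument, interchanging the roles of the endpoints and using Proposition \ref{pr1b}, would yield the corresponding composition rule for the right-sided operators $I_b^k$.
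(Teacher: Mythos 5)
Your proof is correct and follows essentially the same route as the paper's: unfold the composition into an iterated integral, invoke Fubini to swap the order of integration, and recognize the inner integral as $\delta_{k_1,k_2}(x,y)$. The only difference is that you spell out the Tonelli/Fubini justification (using $\delta_{k_1,k_2}\in\mathcal{K}_\omega$ from Proposition \ref{PR1} and the boundedness of $I_a^{k_3}$ on $L^1$) more explicitly than the paper does, which is a welcome refinement rather than a departure.
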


\begin{proof}
First, from Proposition \ref{pr1}, the mapping 
$$
I_a^{k_1}\circ I_a^{k_2}: L^1([a,b];\mathbb{R})\to L^1([a,b];\mathbb{R})
$$
is well-defined. Moreover, by Proposition \ref{PR1}, since  $k_1\mathcal{R}k_2$, we have $\delta_{k_1,k_2}\in \mathcal{K}_\omega$. Next, given $f\in L^1([a,b];\mathbb{R})$, 
for a.e. $x\in [a,b]$, using Fubini's theorem, we have
\begin{eqnarray*}
I_a^{k_1}\left(I_a^{k_2} f\right)(x)&=& \int_a^x k_1(x,y)\left(I_a^{k_2} f\right)(y)\omega(y)\,dy\\
&=& \int_a^x k_1(x,y) \int_a^y k_2(y,z)f(z)\omega(z)\,dz\,\omega(y)dy\\
&=& \int_a^x \left(\int_z^x  k_1(x,y)  k_2(y,z) \omega(y)  \,dy\right) f(z) \omega(z)\,dz\\
&=&\int_a^x \delta_{k_1,k_2}(x,z)f(z)\omega(z)\,dz\\
&=&\left(I_a^{\delta_{k_1,k_2}} f\right)(x),
\end{eqnarray*}
which proves \eqref{p2}.
\end{proof}
Using a similar argument as above, we obtain the following composition result for right-sided $k_i$-integral operators.

\begin{theorem}\label{pr2b}
Let $(k_1,k_2)\in \mathcal{K}_\omega\times \mathcal{K}_\omega$ be such that $k_2\mathcal{R}k_1$. Given a function $f\in L^1([a,b];\mathbb{R})$, we have
$$
I_b^{k_1}\left(I_b^{k_2} f\right)(x)=\left(I_b^{k_3} f\right)(x),\quad \mbox{ a.e. }x\in [a,b],
$$
where $k_3=\delta_{k_2,k_1}$.
\end{theorem}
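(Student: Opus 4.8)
The plan is to mirror the proof of Theorem~\ref{pr2}, replacing the left-sided kernel evaluations by the right-sided ones and applying Fubini's theorem over the appropriate simplex. First I would verify that both sides are well-defined: by Proposition~\ref{pr1b} the composition $I_b^{k_1}\circ I_b^{k_2}$ maps $L^1([a,b];\mathbb{R})$ into itself, and since $k_2\mathcal{R}k_1$, Proposition~\ref{PR1} ensures that $k_3=\delta_{k_2,k_1}$ belongs to $\mathcal{K}_\omega$, so $I_b^{k_3}f$ is also well-defined.

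The core of the argument is a single interchange of the order of integration. For $f\in L^1([a,b];\mathbb{R})$ and a.e.\ $x\in[a,b]$, unfolding the two right-sided integrals gives
\begin{equation*}
I_b^{k_1}\left(I_b^{k_2}f\right)(x)=\int_x^b k_1(y,x)\left(\int_y^b k_2(z,y)f(z)\omega(z)\,dz\right)\omega(y)\,dy.
\end{equation*}
The region of integration is the simplex $\{x\leq y\leq z\leq b\}$. I would interchange the order so that $z$ becomes the outer variable (ranging over $[x,b]$) and $y$ the inner one (ranging over $[x,z]$), which yields
\begin{equation*}
I_b^{k_1}\left(I_b^{k_2}f\right)(x)=\int_x^b \left(\int_x^z k_2(z,y)k_1(y,x)\omega(y)\,dy\right)f(z)\omega(z)\,dz.
\end{equation*}
The key recognition step is that the inner integral is precisely $\delta_{k_2,k_1}(z,x)=k_3(z,x)$, by the definition of $\delta_{k_2,k_1}$. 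Substituting this back gives $I_b^{k_1}(I_b^{k_2}f)(x)=\int_x^b k_3(z,x)f(z)\omega(z)\,dz=\left(I_b^{k_3}f\right)(x)$, which is the claimed identity.

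The main obstacle, exactly as in Theorem~\ref{pr2}, is purely bookkeeping rather than analytic: one must keep every right-sided kernel argument ordered as $k(\text{larger},\text{smaller})$ throughout, and check that after the Fubini swap the inner limits $[x,z]$ together with the integrand $k_2(z,y)k_1(y,x)\omega(y)$ match the definition of $\delta_{k_2,k_1}(z,x)$ — noting in particular that the subscript order is reversed relative to the left-sided statement, so that the composition produces $\delta_{k_2,k_1}$ and not $\delta_{k_1,k_2}$. The legitimacy of the Fubini interchange follows from the integrability already established in Proposition~\ref{pr1b} applied to $|f|$, so no new estimate is required.
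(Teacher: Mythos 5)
Your proof is correct and is exactly the adaptation the paper intends: the paper omits the proof of this theorem, noting only that it follows by the same argument as Theorem~\ref{pr2}, and your Fubini interchange over the simplex $\{x\leq y\leq z\leq b\}$ with the identification of the inner integral as $\delta_{k_2,k_1}(z,x)$ is precisely that adaptation. The reversed subscript order $\delta_{k_2,k_1}$ and the justification of Fubini via the $L^1$ estimate of Proposition~\ref{pr1b} are both handled correctly.
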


\begin{remark}
Observe that in the particular case when 
$$
\omega\equiv 1,\,\, k_1(x,y)=\mathbb{K}_1(x-y),\,\,
k_2(x,y)=\mathbb{K}_2(x-y),\quad (x,y)\in \Omega,
$$
we have
\begin{equation}\label{TRV1}
\left(I_a^{k_i}f\right)(x)=\left(\mathbb{K}_i*f\right)(x),\quad \mbox{a.e. } x\in [a,b],\,\, i=1,2,
\end{equation}
where $*$ denotes the convolution product. Moreover, for $(x,y)\in \Omega$, using the change of variable $\tau=z-y$, we have
\begin{eqnarray*}
k_3(x,y)&:=&\delta_{k_1,k_2}(x,y)\\
&=& \int_y^x k_1(x,z)k_2(z,y)\,dz\\
&=&\int_y^x \mathbb{K}_1(x-z)\mathbb{K}_2(z-y)\,dz\\
&=&\int_0^{x-y} \mathbb{K}_1((x-y)-\tau)\mathbb{K}_2(\tau)\,d\tau\\
&=& \left(\mathbb{K}_1*\mathbb{K}_2\right)(x-y).
\end{eqnarray*}
Therefore, 
\begin{equation}\label{TRV2}
\left(I_a^{k_3}f\right)(x)=\left(\left(\mathbb{K}_1*\mathbb{K}_2\right)*f\right)(x),\quad \mbox{a.e. } x\in [a,b].
\end{equation}
Hence, using \eqref{TRV1}, \eqref{TRV2} and  Theorem \ref{pr2}, we obtain
$$
\left(\mathbb{K}_1*\left(\mathbb{K}_2*f\right)\right)(x)=\left(\left(\mathbb{K}_1*\mathbb{K}_2\right)*f\right)(x),\quad \mbox{a.e. } x\in [a,b],
$$
which is the standard  associativity property for  convolution products.

For general kernels $(k_1,k_2)\in \mathcal{K}_\omega\times \mathcal{K}_\omega$,  Theorem \ref{pr2} can be considered as a generalization of the  associativity property for convolution products. 
\end{remark}

\begin{remark}
Observe that if $\omega\equiv 1$, $k_1$ and $k_2$ are conjugate (see Definition \ref{dcc}) and $k_3=\delta_{k_1,k_2}$, then 
$$
\left(I_a^{k_3}f\right)(x)= \int_a^x f(y)\,dy,\quad \mbox{a.e. } x\in [a,b]
$$
and 
$$
\left(I_b^{k_3}f\right)(x)= \int_x^b f(y)\,dy,\quad \mbox{a.e. } x\in [a,b].
$$
Therefore, by Theorems \ref{pr2} and \ref{pr2b}, we obtain
$$
I_a^{k_1}\left(I_a^{k_2} f\right)(x)=\int_a^x f(y)\,dy,\quad \mbox{ a.e. }x\in [a,b]
$$
and
$$
I_b^{k_1}\left(I_b^{k_2} f\right)(x)= \int_x^b f(y)\,dy,\quad \mbox{ a.e. }x\in [a,b].
$$
\end{remark}

Next, we shall establish the following integration by parts rule.

\begin{theorem}\label{INTPA}
Let $f\in L^1([a,b];\mathbb{R})$ and $g\in L^\infty([a,b];\mathbb{R})$.  Then
\begin{equation}\label{RL1}
\int_a^b (I_a^kf)(x) g(x) \omega(x)\,dx=\int_a^b I_b^k(g)(x) f(x)\omega(x)\,dx.
\end{equation}
\end{theorem}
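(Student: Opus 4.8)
The plan is to reduce \eqref{RL1} to a single application of Fubini's theorem, since both $k$-integrals are integrals against the same kernel $k$ but over complementary ranges. First I would expand the left-hand side by the definition of $I_a^k$, writing
$$\int_a^b (I_a^kf)(x)g(x)\omega(x)\,dx = \int_a^b \left(\int_a^x k(x,y)f(y)\omega(y)\,dy\right) g(x)\omega(x)\,dx,$$
so that the integration runs over the triangle $\{(x,y):a\le y\le x\le b\}$.

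Before exchanging the order of integration, I would verify absolute integrability over this triangle to license Fubini. Since $g\in L^\infty([a,b];\mathbb{R})$, estimating the integrand by its absolute value and integrating first in $x$ gives the bound
$$\int_a^b |f(y)|\omega(y)\left(\int_y^b k(x,y)|g(x)|\omega(x)\,dx\right)dy \le \|g\|_{L^\infty([a,b];\mathbb{R})} \int_a^b |f(y)|\omega(y) F_k(y)\,dy,$$
which is finite because $k\in\mathcal{K}_\omega$ ensures $F_k\in L^\infty([a,b[;\mathbb{R})$, together with $\omega\in L^\infty([a,b];\mathbb{R})$ and $f\in L^1([a,b];\mathbb{R})$; indeed the right-hand side is at most $\|g\|_{L^\infty([a,b];\mathbb{R})}\|F_k\|_{L^\infty([a,b[;\mathbb{R})}\|\omega\|_{L^\infty([a,b];\mathbb{R})}\|f\|_{L^1([a,b];\mathbb{R})}$. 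This is essentially the same estimate already carried out in the proof of Proposition~\ref{pr1}.

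With Fubini justified, I would interchange the order of integration on the triangle, integrating first in $x$ over $[y,b]$ and then in $y$ over $[a,b]$, obtaining
$$\int_a^b f(y)\omega(y)\left(\int_y^b k(x,y)g(x)\omega(x)\,dx\right)dy.$$
The only substantive step is then to recognize the inner integral as $(I_b^kg)(y)$: by the definition of the right-sided $k$-integral one has $(I_b^kg)(y)=\int_y^b k(x,y)g(x)\omega(x)\,dx$, where the running variable $x$ occupies the first slot of $k$ and the fixed point $y$ the second, exactly matching what appears above. Substituting this and renaming the dummy variable $y$ as $x$ yields the right-hand side of \eqref{RL1}.

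I do not expect any genuine obstacle here: the identity is a symmetry between $I_a^k$ and $I_b^k$ built directly into their definitions, and the one point requiring care is confirming that the standing hypotheses ($f\in L^1$, $g\in L^\infty$, $k\in\mathcal{K}_\omega$, $\omega,\omega^{-1}\in L^\infty$) suffice to apply Fubini's theorem, which the estimate above settles.
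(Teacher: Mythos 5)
Your proposal is correct and follows essentially the same route as the paper's own proof: expand $I_a^kf$, justify the interchange via the $\|F_k\|_{L^\infty}$ estimate (the paper cites the integrability of $(I_a^kf)g\omega$, which rests on the same bound from Proposition~\ref{pr1}), apply Fubini, and identify the inner integral with $(I_b^kg)(y)$. No gaps.
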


\begin{proof}
First, since $f\in L^1([a,b];\mathbb{R})$ and $g\omega \in L^\infty([a,b];\mathbb{R})$, we have $(I_a^kf)g\omega \in L^1([a,b];\mathbb{R})$. Next, using Fubini's theorem, we obtain
\begin{eqnarray*}
\int_a^b (I_a^kf)(x) g(x)\omega(x)\,dx&=&\int_a^b  \int_a^x k(x,y) f(y)\omega(y)\,dy g(x)\omega(x)\,dx\\
&=&\int_a^b \left(\int_y^b k(x,y)g(x)\omega(x)\,dx\right) f(y)\omega(y)\,dy\\
&=& \int_a^b I_b^k(g)(y) f(y)\omega(y)\,dy,
\end{eqnarray*}
which proves \eqref{RL1}.
\end{proof}

\begin{example}[Riemann-Liouville fractional integrals]\label{exRLI}
Let $\alpha>0$ and
$$
\omega(x)=1,\quad a\leq x\leq b.
$$
Let $k_\alpha$ be the Riemann-Liouville fractional kernel-function given by
$$
k_\alpha(x,y)=\frac{(x-y)^{\alpha-1}}{\Gamma(\alpha)}, \quad (x,y)\in \Omega.
$$
Then $I_a^{k_\alpha}$ is the left-sided Riemann-Liouville fractional integral of order $\alpha$, and  $I_b^{k_\alpha}$ is the right-sided Riemann-Liouville fractional integral of order $\alpha$ (see \cite{KST,SKM}).
\end{example}

\begin{example}[Hadamard fractional integrals]\label{exHDI}
Let $(a,b)\in \mathbb{R}^2$ be such that $0<a<b$. Let $\alpha>0$ and 
$$
\omega(x)=\frac{1}{x},\quad x\in [a,b].
$$
Let $k_\alpha$  be the Hadamard fractional kernel-function given  by
$$
k_\alpha(x,y)=\frac{1}{\Gamma(\alpha)} \left(\ln \frac{x}{y}\right)^{\alpha-1},\quad (x,y)\in \Omega.
$$
Then $I_a^{k_\alpha}$ is the left-sided Hadamard fractional integral of order $\alpha$, and  $I_b^{k_\alpha}$ is the right-sided Hadamard fractional integral of order $\alpha$ (see \cite{KST,SKM}).
\end{example}

\begin{remark}
Note that the most well-known properties related to Riemann-Liouville fractional integrals and Hadamard  fractional integrals can be deduced  from the obtained results in this section.
\end{remark}

\section{Derivative operators with respect to a kernel-function}\label{sec4}

In this section, we introduce derivative operators with respect to a kernel-function. First, let us recall briefly some notions that will be used later. 
\begin{definition}
Let $\varphi\in L^1([a,b];\mathbb{R})$. We say that $\varphi$ has  an absolutely continuous representative, if there exists a function $\psi\in AC([a,b];\mathbb{R})$, where $AC([a,b];\mathbb{R})$ denotes the space of real valued and absolutely continuous functions in $[a,b]$, such that 
$$
\varphi(x)=\psi(x),\quad \mbox{ a.e. } x\in [a,b].
$$
In this case, the function $\varphi$ is identified to its absolutely continuous representative $\psi$.
\end{definition} 
Recall that a function $\psi$ belongs to the functional space 
$AC([a,b];\mathbb{R})$ if and only if there exist a constant $c\in \mathbb{R}$ and a function $\mu \in L^1([a,b];\mathbb{R})$ such that
$$
\psi(x)=c+\int_a^x \mu(t)\,dt,\quad x\in [a,b].
$$
In this case, we have
$$
c=\psi(a)
$$
and
$$
\frac{d\psi}{dx}(x)=\mu(x),\quad \mbox{ a.e. } x\in [a,b].
$$
For more details on such spaces, see for example \cite{KST,SKM}.

Next, let us fix $\omega\in \mathcal{W}$ and $k'\in \mathcal{K}_\omega$.

\begin{definition}
Let $f\in L^1([a,b];\mathbb{R})$ be such that $I_a^{k'}f$ has an absolutely continuous representative.  We define the left-sided $k'$-derivative of $f$  by
$$
(D_a^{k'}f)(x)= \left(\frac{1}{\omega(x)}\frac{d}{dx}\right) (I_a^{k'} f)(x),\quad \mbox{ a.e. } x\in [a,b].
$$
Let $f\in L^1([a,b];\mathbb{R})$ be such that $I_b^{k'}f$ has an absolutely continuous representative.  We define the right-sided $k'$-derivative of $f$  by
$$
(D_b^{k'}f)(x)= -\left(\frac{1}{\omega(x)}\frac{d}{dx}\right) (I_b^{k'} f)(x),\quad \mbox{ a.e. } x\in [a,b].
$$
\end{definition}

Further, let us give some examples, where we compute the $k'$-derivatives of some particular functions. 

First, we consider the case $f\equiv C$, where $C\in \mathbb{R}$ is a certain constant. By Proposition \ref{ex1}, we know that 
$$
(I_a^{k'}f)(x)=C \delta_{k',\mathds{1}}(x,a),\quad \mbox{ a.e. }x\in [a,b],
$$
i.e., 
$$
I_a^{k'}f= C \delta_{k',\mathds{1}}(\cdot,a)
$$
in $L^1([a,b];\mathbb{R})$.  Hence, if $\delta_{k',\mathds{1}}(\cdot,a)$ has an absolutely continuous representative, then
$$
(D_a^{k'}f)(x)= C \left(\frac{1}{\omega(x)}\frac{d}{dx}\right) \delta_{k',\mathds{1}}(x,a),\quad \mbox{ a.e. }x\in [a,b].
$$
Therefore, we have the following result.

\begin{proposition}\label{ex21}
Suppose that $\delta_{k',\mathds{1}}(\cdot,a)$ has an absolutely continuous representative. Let $f\equiv C$, where $C\in \mathbb{R}$ is a certain constant. Then
$$
(D_a^{k'}f)(x)= C \left(\frac{1}{\omega(x)}\frac{d}{dx}\right) \delta_{k',\mathds{1}}(x,a),\quad \mbox{ a.e. }x\in [a,b].
$$
\end{proposition}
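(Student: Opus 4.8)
The plan is to derive the formula directly from the definition of the left-sided $k'$-derivative, using Proposition \ref{ex1} to evaluate the relevant $k'$-integral of the constant function. Since $f\equiv C$, Proposition \ref{ex1} (read with $k'$ in the role of the kernel $k$) gives
$$
(I_a^{k'}f)(x)=C\,\delta_{k',\mathds{1}}(x,a),\quad\mbox{a.e. } x\in[a,b],
$$
so that $I_a^{k'}f=C\,\delta_{k',\mathds{1}}(\cdot,a)$ as elements of $L^1([a,b];\mathbb{R})$. This identity is the only structural input; everything else is bookkeeping with the definition of $D_a^{k'}$.

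First I would verify that $D_a^{k'}f$ is actually well-defined, which is the sole place where the hypothesis enters. By assumption $\delta_{k',\mathds{1}}(\cdot,a)$ admits an absolutely continuous representative $\psi\in AC([a,b];\mathbb{R})$; then $C\psi$ is again absolutely continuous, since a scalar multiple of an $AC$ function is $AC$, and $C\psi$ represents $I_a^{k'}f$ in $L^1([a,b];\mathbb{R})$. Hence $I_a^{k'}f$ has an absolutely continuous representative, so the definition of the left-sided $k'$-derivative applies to $f$.

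Next I would simply unfold the definition and pull the constant through the differentiation. Identifying $I_a^{k'}f$ with its absolutely continuous representative $C\psi$, and using that both $\frac{d}{dx}$ (taken a.e.\ on an $AC$ function) and multiplication by $\omega^{-1}(x)$ are linear in the scalar $C$, I obtain
$$
(D_a^{k'}f)(x)=\left(\frac{1}{\omega(x)}\frac{d}{dx}\right)(I_a^{k'}f)(x)=\left(\frac{1}{\omega(x)}\frac{d}{dx}\right)\bigl(C\,\delta_{k',\mathds{1}}(x,a)\bigr)=C\left(\frac{1}{\omega(x)}\frac{d}{dx}\right)\delta_{k',\mathds{1}}(x,a)
$$
for a.e.\ $x\in[a,b]$, which is exactly the asserted identity.

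There is no genuine obstacle here: the statement is a routine consequence of Proposition \ref{ex1} combined with the definition of the $k'$-derivative. The only step deserving a word of care is the well-definedness of $D_a^{k'}f$, namely checking that $I_a^{k'}f$ inherits an absolutely continuous representative from the hypothesis placed on $\delta_{k',\mathds{1}}(\cdot,a)$; once that observation is recorded, the remaining computation is immediate.
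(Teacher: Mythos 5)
Your proof is correct and follows essentially the same route as the paper: apply Proposition \ref{ex1} with $k'$ in place of $k$ to get $I_a^{k'}f=C\,\delta_{k',\mathds{1}}(\cdot,a)$, then invoke the absolute-continuity hypothesis and the definition of $D_a^{k'}$. Your extra remark that a scalar multiple of an $AC$ function is again $AC$ is a small point of care the paper leaves implicit, but the argument is otherwise identical.
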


Similarly, using Proposition \ref{ex1b}, we deduce the following result.

\begin{proposition}\label{ex21b}
Suppose that $\delta_{\mathds{1},k'}(b,\cdot)$ has an absolutely continuous representative. Let $f\equiv C$, where $C\in \mathbb{R}$ is a certain constant. Then
$$
(D_b^{k'}f)(x)= -C \left(\frac{1}{\omega(x)}\frac{d}{dx}\right) \delta_{\mathds{1},k'}(b,x),\quad \mbox{ a.e. }x\in [a,b].
$$
\end{proposition}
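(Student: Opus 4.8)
The plan is to mirror the proof of Proposition~\ref{ex21}, replacing the left-sided objects by their right-sided counterparts. First I would apply Proposition~\ref{ex1b} to the constant function $f\equiv C$, which yields
$$
(I_b^{k'}f)(x)= C\,\delta_{\mathds{1},k'}(b,x),\quad \text{a.e. } x\in[a,b],
$$
so that $I_b^{k'}f = C\,\delta_{\mathds{1},k'}(b,\cdot)$ as an element of $L^1([a,b];\mathbb{R})$.

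Next I would invoke the standing hypothesis that $\delta_{\mathds{1},k'}(b,\cdot)$ has an absolutely continuous representative. Since a scalar multiple of an absolutely continuous function is again absolutely continuous, it follows that $I_b^{k'}f$ has an absolutely continuous representative, namely $C\,\delta_{\mathds{1},k'}(b,\cdot)$. In particular, the right-sided $k'$-derivative $D_b^{k'}f$ is well-defined in the sense of the governing definition.

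Finally I would apply that definition directly, using the linearity of the operator $\frac{1}{\omega(x)}\frac{d}{dx}$ to pull the constant $C$ through the differentiation, obtaining
$$
(D_b^{k'}f)(x)= -\left(\frac{1}{\omega(x)}\frac{d}{dx}\right)(I_b^{k'}f)(x)= -C\left(\frac{1}{\omega(x)}\frac{d}{dx}\right)\delta_{\mathds{1},k'}(b,x),\quad \text{a.e. } x\in[a,b],
$$
which is precisely the claimed identity.

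The argument is essentially routine, so I do not anticipate a genuine obstacle; the only point requiring a moment's care is the transfer of the absolutely continuous representative hypothesis from $\delta_{\mathds{1},k'}(b,\cdot)$ to $I_b^{k'}f$, together with the bookkeeping of the two minus signs, so that the sign built into the definition of $D_b^{k'}$ is correctly reproduced in the statement. The degenerate case $C=0$ is trivial, since both sides then vanish.
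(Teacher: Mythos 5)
Your proposal is correct and follows exactly the route the paper intends: apply Proposition~\ref{ex1b} to the constant function, transfer the absolutely continuous representative hypothesis to $I_b^{k'}f$, and then invoke the definition of $D_b^{k'}$ with its built-in minus sign. This matches the paper's (implicit) proof, which simply mirrors the argument for Proposition~\ref{ex21}.
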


Next, we consider the case when
$$
f(x)=k''(x,a),\quad \mbox{ a.e. } x\in [a,b],
$$
where $k''\in \mathcal{K}_\omega$ is a certain kernel-function. By Proposition \ref{ex2}, we know that 
$$
(I_a^{k'}f)(x)=\delta_{k',k''}(x,a),\quad \mbox{ a.e. } x\in [a,b].
$$
Therefore, if $\delta_{k',k''}(\cdot,a)$ has an absolutely continuous representative, then
$$
(D_{a}^{k'}f)(x)=\left(\frac{1}{\omega(x)}\frac{d}{dx}\right)\delta_{k',k''}(x,a),\quad \mbox{ a.e. } x\in [a,b].
$$ 
Hence, we obtain the following result.

\begin{proposition}\label{pd2k}
Let $k''\in \mathcal{K}_\omega$. Let $f: [a,b]\to \mathbb{R}$ be the function given by
$$
f(x)=k''(x,a), \quad \mbox{ a.e. } x\in [a,b].
$$ 
If $\delta_{k',k''}(\cdot,a)$ has an absolutely continuous representative, then
$$
(D_{a}^{k'}f)(x)=\left(\frac{1}{\omega(x)}\frac{d}{dx}\right)\delta_{k',k''}(x,a),\quad \mbox{ a.e. } x\in [a,b].
$$ 
\end{proposition}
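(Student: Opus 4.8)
The plan is to reduce the statement to Proposition~\ref{ex2} followed by the defining formula for $D_a^{k'}$, so the proof is essentially a two-step bookkeeping argument. First I would record that the hypothesis $k''\in\mathcal{K}_\omega$ lets me apply Proposition~\ref{ex2} with the integrating kernel taken to be $k'$: since $f(x)=k''(x,a)$ is exactly a function of the form covered there, I obtain at once that $f\in L^1([a,b];\mathbb{R})$ and that
$$
(I_a^{k'}f)(x)=\delta_{k',k''}(x,a),\quad \mbox{ a.e. } x\in [a,b].
$$
In other words, in $L^1([a,b];\mathbb{R})$ the function $I_a^{k'}f$ coincides with $\delta_{k',k''}(\cdot,a)$.

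Next I would use the hypothesis that $\delta_{k',k''}(\cdot,a)$ has an absolutely continuous representative. Because $I_a^{k'}f$ and $\delta_{k',k''}(\cdot,a)$ agree almost everywhere, they represent the same element of $L^1([a,b];\mathbb{R})$, and having an absolutely continuous representative is a property of the $L^1$-class; hence $I_a^{k'}f$ also has an absolutely continuous representative. This is precisely the condition required for the left-sided $k'$-derivative $D_a^{k'}f$ to be well-defined.

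Finally I would invoke the definition of $D_a^{k'}$ directly: substituting the computed value of $I_a^{k'}f$ gives
$$
(D_a^{k'}f)(x)=\left(\frac{1}{\omega(x)}\frac{d}{dx}\right)(I_a^{k'}f)(x)=\left(\frac{1}{\omega(x)}\frac{d}{dx}\right)\delta_{k',k''}(x,a),\quad \mbox{ a.e. } x\in [a,b],
$$
which is the claimed identity. Since every ingredient is already available, I do not expect a genuine obstacle here; the only point demanding care is the correct matching of kernels when applying Proposition~\ref{ex2}, namely that the integrating kernel is $k'$ and the kernel defining the integrand is $k''$, so that the resulting composition index appears in the order $\delta_{k',k''}$ rather than $\delta_{k'',k'}$.
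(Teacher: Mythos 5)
Your proposal is correct and follows exactly the argument the paper gives just before the statement: apply Proposition \ref{ex2} with integrating kernel $k'$ to get $(I_a^{k'}f)(x)=\delta_{k',k''}(x,a)$ a.e., transfer the absolutely continuous representative across the a.e.\ equality, and then apply the definition of $D_a^{k'}$. There is nothing to add; the kernel-ordering caveat you note is the only point of care and you handle it correctly.
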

Similarly, using Proposition \ref{ex2b}, we obtain the following result.
\begin{proposition}\label{pd2kb}
Let $k''\in \mathcal{K}_\omega$. Let $f: [a,b]\to \mathbb{R}$ be the function given by
$$
f(x)=k''(b,x), \quad \mbox{ a.e. } x\in [a,b].
$$ 
If $\delta_{k'',k'}(b,\cdot)$ has an absolutely continuous representative, then
$$
(D_{b}^{k'}f)(x)=-\left(\frac{1}{\omega(x)}\frac{d}{dx}\right)\delta_{k'',k'}(b,x),\quad \mbox{ a.e. } x\in [a,b].
$$ 
\end{proposition}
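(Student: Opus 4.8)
The plan is to mirror exactly the argument that yields the left-sided counterpart in Proposition \ref{pd2k}, transporting it to the right-sided setting by invoking Proposition \ref{ex2b} in place of Proposition \ref{ex2}. The whole point is that the right-sided $k'$-integral of $f$ has already been computed in closed form, so the proposition reduces to differentiating a known function and reading off the definition of $D_b^{k'}$.

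First I would pin down the right-sided $k'$-integral of $f$. Applying Proposition \ref{ex2b} with the kernel generating $f$ taken to be $k''$ and the integration kernel taken to be $k'$ (that is, substituting $k''$ for the ``$k'$'' and $k'$ for the ``$k$'' appearing in the statement of \ref{ex2b}), I obtain at once that $f\in L^1([a,b];\mathbb{R})$ and
$$
(I_b^{k'}f)(x)=\delta_{k'',k'}(b,x),\quad \mbox{ a.e. } x\in [a,b].
$$
Hence $I_b^{k'}f$ coincides, as an element of $L^1([a,b];\mathbb{R})$, with the function $\delta_{k'',k'}(b,\cdot)$.

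Next I would use the standing hypothesis that $\delta_{k'',k'}(b,\cdot)$ has an absolutely continuous representative. Since $I_b^{k'}f$ equals this function almost everywhere, it shares that representative, so $D_b^{k'}f$ is well-defined. By the very definition of the right-sided $k'$-derivative, it then follows that
$$
(D_b^{k'}f)(x)=-\left(\frac{1}{\omega(x)}\frac{d}{dx}\right)(I_b^{k'}f)(x)=-\left(\frac{1}{\omega(x)}\frac{d}{dx}\right)\delta_{k'',k'}(b,x),\quad \mbox{ a.e. } x\in [a,b],
$$
which is precisely the asserted formula.

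Because the argument is a direct substitution into results already established, there is no genuine obstacle. The only points that deserve care, rather than any real difficulty, are bookkeeping ones: when applying Proposition \ref{ex2b} one must keep the kernels in the correct slots, so that the generating kernel $k''$ lands in the first argument of $\delta$ and the integration kernel $k'$ in the second, giving $\delta_{k'',k'}$ and not $\delta_{k',k''}$; and the minus sign built into the definition of $D_b^{k'}$ must be carried through, which is exactly why the final formula differs from its left-sided analogue in Proposition \ref{pd2k} by that sign.
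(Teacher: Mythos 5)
Your proof is correct and follows exactly the route the paper intends: compute $(I_b^{k'}f)(x)=\delta_{k'',k'}(b,x)$ via Proposition \ref{ex2b} with the kernels in the right slots, then invoke the absolute continuity hypothesis and the definition of $D_b^{k'}$ (with its built-in minus sign). The paper gives no separate proof, merely remarking that the result follows ``similarly'' from Proposition \ref{ex2b}, and your argument is precisely that omitted verification.
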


Now, we discuss the relation between $k$-integral and $k'$-derivative operators, where $k\in \mbox{conj}(k')$. In this part, the reader will be aware of the importance of the conjugate kernels concept.

Let $k'\in \mathcal{K}_\omega$ and
$$
k\in \mbox{conj}(k').
$$

\begin{theorem}\label{TN1}
For every $f\in L^1([a,b];\mathbb{R})$, we have
\begin{equation}\label{inv1}
D_a^{k'}(I_a^kf)(x)= f(x),\quad \mbox{ a.e. }x\in [a,b]
\end{equation}
and
\begin{equation}\label{inv1b}
D_b^{k'}(I_b^kf)(x)= f(x),\quad \mbox{ a.e. }x\in [a,b].
\end{equation}
\end{theorem}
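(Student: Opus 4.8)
The plan is to reduce both identities to the fundamental theorem of calculus, by combining the composition rule for $k$-integrals with the defining property of conjugate kernels. I focus on the left-sided identity \eqref{inv1}; the right-sided one is entirely parallel.

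First I would observe that, since $k\in\mbox{conj}(k')$, we have $\delta_{k',k}(x,y)=1$ for every $(x,y)\in\Omega$; in particular $0<\delta_{k',k}(x,y)<\infty$, so $k'\mathcal{R}k$ holds and Theorem~\ref{pr2} applies to the composition $I_a^{k'}\circ I_a^k$. Concretely, for $f\in L^1([a,b];\mathbb{R})$,
$$
I_a^{k'}\left(I_a^k f\right)(x)=\left(I_a^{\delta_{k',k}}f\right)(x)=\left(I_a^{\mathds{1}}f\right)(x)=\int_a^x f(y)\omega(y)\,dy,\quad\mbox{a.e. }x\in[a,b],
$$
where the second equality uses $\delta_{k',k}=\mathds{1}$ and the last is just the definition of $I_a^{\mathds{1}}$. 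This is the conceptual heart of the argument: the notion of conjugacy is arranged precisely so that the kernel of the composed operator collapses to the unit kernel $\mathds{1}$, turning the double integral into ordinary integration.

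Next I would note that $f\omega\in L^1([a,b];\mathbb{R})$ (since $f\in L^1$ and $\omega\in L^\infty$), so the function $x\mapsto\int_a^x f(y)\omega(y)\,dy$ is absolutely continuous. Hence $I_a^{k'}(I_a^k f)$ has an absolutely continuous representative, which guarantees that $D_a^{k'}(I_a^k f)$ is well-defined in the sense of the definition of the left-sided $k'$-derivative. Applying that operator together with the Lebesgue fundamental theorem of calculus then gives
$$
D_a^{k'}\left(I_a^k f\right)(x)=\frac{1}{\omega(x)}\frac{d}{dx}\int_a^x f(y)\omega(y)\,dy=\frac{1}{\omega(x)}\,f(x)\omega(x)=f(x),\quad\mbox{a.e. }x\in[a,b],
$$
which is \eqref{inv1}.

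For \eqref{inv1b} I would argue identically, replacing Theorem~\ref{pr2} by Theorem~\ref{pr2b}: conjugacy now supplies $\delta_{k,k'}\equiv1$, hence $k\mathcal{R}k'$ and $I_b^{k'}(I_b^k f)(x)=(I_b^{\mathds{1}}f)(x)=\int_x^b f(y)\omega(y)\,dy$, again absolutely continuous; the extra minus sign in the definition of $D_b^{k'}$ cancels the minus sign produced by differentiating $x\mapsto\int_x^b f(y)\omega(y)\,dy$, yielding $f$ once more. The only points that require attention are the verification that the composition theorems are applicable (immediate from $0<1<\infty$) and the passage through the absolutely continuous representative so that the a.e. derivative really equals $f\omega$; I do not anticipate any genuine obstacle beyond these routine checks, since the essential work has already been packaged into Theorems~\ref{pr2} and~\ref{pr2b} and the definition of conjugate kernels.
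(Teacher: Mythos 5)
Your proposal is correct and follows essentially the same route as the paper: compose via Theorem~\ref{pr2} (resp.\ \ref{pr2b}), use conjugacy to collapse $\delta_{k',k}$ (resp.\ $\delta_{k,k'}$) to the unit kernel, identify the result with the absolutely continuous function $x\mapsto\int_a^x f(y)\omega(y)\,dy$ (resp.\ $x\mapsto\int_x^b f(y)\omega(y)\,dy$), and differentiate. Your explicit check that conjugacy implies $k'\mathcal{R}k$ so that the composition theorem applies is a small point the paper leaves implicit, but otherwise the arguments coincide.
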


\begin{proof}
Let $f\in L^1([a,b];\mathbb{R})$. First, using Theorem \ref{pr2}, we have
$$
I_a^{k'}\circ I_a^kf=I_a^{\delta_{k',k}}f.
$$
Since $k$ and $k'$ are  conjugate, we have 
$$
\delta_{k',k}(x,y)=1,\quad (x,y)\in \Omega.
$$
Therefore, 
$$
I_a^{k'}(I_a^kf)(x)=\int_a^x f(y)\omega(y)\,dy,\quad \mbox{ a.e. } x\in [a,b].
$$
Observe that the function
$$
x\mapsto \int_a^x f(y)\omega(y)\,dy
$$
is an absolutely continuous representative of $I_a^{k'}(I_a^kf)$. Hence, for a.e. $x\in [a,b]$, we obtain
\begin{eqnarray*}
D_a^{k'}(I_a^kf)(x)&=&\left(\frac{1}{\omega(x)}\frac{d}{dx}\right) \int_a^x f(y)\omega(y)\,dy\\
&=&\frac{1}{\omega(x)} f(x) \omega(x)\\
&=& f(x),
\end{eqnarray*}
which proves \eqref{inv1}. Similarly, using Theorem \ref{pr2b} and the fact that $k$ and $k'$ are conjugate, we obtain
$$
I_b^{k'}(I_b^kf)(x)=\int_x^b f(y)\omega(y)\,dy,\quad \mbox{ a.e. } x\in [a,b],
$$
i.e.,
$$
I_b^{k'}(I_b^kf)(x)=\int_a^b f(y)\omega(y)\,dy+\int_a^x -f(y)\omega(y)\,dy,\quad \mbox{ a.e. } x\in [a,b].
$$
Therefore, the function
$$
x\mapsto \int_x^b f(y)\omega(y)\,dy
$$
is an absolutely continuous representative of $I_b^{k'}(I_b^kf)$. Hence, for a.e. $x\in [a,b]$, we obtain
\begin{eqnarray*}
D_b^{k'}(I_b^kf)(x)&=&-\left(\frac{1}{\omega(x)}\frac{d}{dx}\right) \int_x^b f(y)\omega(y)\,dy\\
&=&\frac{1}{\omega(x)} f(x) \omega(x)\\
&=& f(x),
\end{eqnarray*}
which proves \eqref{inv1b}.
\end{proof}

\begin{remark}
In \cite{KT}, Kochubei introduced a certain    class of integro-differential operators involving a kernel function 
$$
k(t,s)=\mathbb{K}(t-s).
$$
In order to guaranty that the operator possesses a right inverse,  Sonine condition is imposed. 
For more general kernels, we use the conjugate kernels condition (see  Theorem \ref{TN1}).  \end{remark}

Further,  we denote by $I_a^k(L^1)$ the set of functions $f\in L^1([a,b];\mathbb{R})$ such that there exists $\varphi\in L^1([a,b];\mathbb{R})$ satisfying
$$
f(x)=(I_a^k \varphi)(x),\quad \mbox{ a.e. } x\in [a,b].
$$
Similarly, we denote by $I_b^k(L^1)$ the set of functions $f\in L^1([a,b];\mathbb{R})$ such that there exists $\varphi\in L^1([a,b];\mathbb{R})$ satisfying
$$
f(x)=(I_b^k \varphi)(x),\quad \mbox{ a.e. } x\in [a,b].
$$

\begin{theorem}\label{TN2}
For every $f\in I_a^k(L^1)$, we have
\begin{equation}\label{inv2}
I_a^k \left(D_a^{k'}f\right)(x)= f(x),\quad \mbox{ a.e. }x\in [a,b].
\end{equation}
For every $f\in I_b^k(L^1)$, we have
\begin{equation}\label{inv2b}
I_b^k \left(D_b^{k'}f\right)(x)= f(x),\quad \mbox{ a.e. }x\in [a,b].
\end{equation}
\end{theorem}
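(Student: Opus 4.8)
The plan is to reduce both identities directly to the left-inverse relation already established in Theorem \ref{TN1}. The essential point is that the hypothesis $f\in I_a^k(L^1)$ is precisely what allows us to write $f$ in the form to which Theorem \ref{TN1} applies, and conceptually it is exactly the restriction that removes the boundary/constant term obstructing $I_a^k\circ D_a^{k'}$ from being the identity on all of $L^1$ (analogously to the classical identity $I^\alpha D^\alpha f = f$ minus initial-value terms in Riemann-Liouville calculus).

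First I would fix $f\in I_a^k(L^1)$ and invoke the definition of this set to produce a function $\varphi\in L^1([a,b];\mathbb{R})$ with $f=I_a^k\varphi$ a.e. on $[a,b]$. Before differentiating I would record that $D_a^{k'}f$ is well-defined: indeed, the computation carried out in the proof of Theorem \ref{TN1} shows that $I_a^{k'}f=I_a^{k'}(I_a^k\varphi)$ coincides a.e. with the function $x\mapsto\int_a^x\varphi(y)\omega(y)\,dy$, and since $\varphi\in L^1([a,b];\mathbb{R})$ and $\omega\in L^\infty([a,b];\mathbb{R})$ the integrand $\varphi\omega$ lies in $L^1([a,b];\mathbb{R})$, so this function is absolutely continuous. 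Hence $I_a^{k'}f$ has an absolutely continuous representative and $D_a^{k'}f$ exists.

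With this in hand, the inversion relation \eqref{inv1} of Theorem \ref{TN1} gives $D_a^{k'}f=D_a^{k'}(I_a^k\varphi)=\varphi$ a.e. on $[a,b]$. Applying the left-sided $k$-integral operator to this equality and using $f=I_a^k\varphi$ once more yields $I_a^k(D_a^{k'}f)=I_a^k\varphi=f$ a.e., which is \eqref{inv2}. The right-sided identity \eqref{inv2b} then follows by the identical argument: for $f\in I_b^k(L^1)$ I would write $f=I_b^k\varphi$, check as above (using Theorem \ref{pr2b}) that $D_b^{k'}f$ is well-defined, apply relation \eqref{inv1b} to obtain $D_b^{k'}f=\varphi$, and integrate to conclude $I_b^k(D_b^{k'}f)=I_b^k\varphi=f$.

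I do not anticipate any serious obstacle: the argument is essentially a one-line consequence of Theorem \ref{TN1} once the representation $f=I_a^k\varphi$ is unpacked. The only point requiring a moment's care is the well-definedness of $D_a^{k'}f$, which is why I would verify the absolute continuity of $I_a^{k'}f$ explicitly before applying the inversion relation.
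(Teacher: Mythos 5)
Your proposal is correct and follows essentially the same route as the paper: both unpack $f=I_a^k\varphi$, use conjugacy to identify $I_a^{k'}f$ with $x\mapsto\int_a^x\varphi(y)\omega(y)\,dy$, conclude $D_a^{k'}f=\varphi$, and apply $I_a^k$. The only cosmetic difference is that you invoke Theorem \ref{TN1} for the step $D_a^{k'}(I_a^k\varphi)=\varphi$ while the paper re-derives that computation inline; your explicit check that $I_a^{k'}f$ has an absolutely continuous representative is a welcome (if minor) addition.
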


\begin{proof}
Let $f\in I_a^k(L^1)$. Then there exists some $\varphi\in L^1([a,b];\mathbb{R})$ such that 
\begin{equation}\label{JAL1}
f(x)=(I_a^k \varphi)(x),\quad \mbox{ a.e. } x\in [a,b].
\end{equation}
Using Theorem \ref{pr2}, we obtain
$$
I_a^{k'}f(x) =\left(I_a^{\delta_{k',k}}\varphi\right), \quad \mbox{ a.e. } x\in [a,b].
$$
Since $k$ and $k'$ are  conjugate, we have 
$$
\delta_{k',k}(x,y)=1,\quad (x,y)\in \Omega,
$$
which yields
$$
I_a^{k'}f(x) =\int_a^x \varphi(y)\omega(y)\,dy,\quad \mbox{ a.e. } x\in [a,b].
$$
Hence,
\begin{eqnarray*}
(D_a^{k'}f)(x)&=&\left(\frac{1}{\omega(x)}\frac{d}{dx}\right) (I_a^{k'}f)(x)\\
&=& \left(\frac{1}{\omega(x)}\frac{d}{dx}\right) \int_a^x \varphi(y)\omega(y)\,dy\\
&=&\frac{1}{\omega(x)} \varphi(x) \omega(x)\\
&=& \varphi(x),\quad \mbox{ a.e. } x\in [a,b],
\end{eqnarray*}
which implies that 
\begin{equation}\label{JAL2}
I_a^k \left(D_a^{k'}f\right)(x)=(I_a^k \varphi)(x),\quad \mbox{ a.e. } x\in [a,b].
\end{equation}
Combining \eqref{JAL1} with \eqref{JAL2}, \eqref{inv2} follows. Next, let $f\in I_b^k(L^1)$. Then there exists some $\varphi\in L^1([a,b];\mathbb{R})$ such that 
\begin{equation}\label{JAL1b}
f(x)=(I_b^k \varphi)(x),\quad \mbox{ a.e. } x\in [a,b].
\end{equation}
Using Theorem \ref{pr2b}, we obtain
$$
I_b^{k'}f(x) =\left(I_b^{\delta_{k,k'}}\varphi\right), \quad \mbox{ a.e. } x\in [a,b],
$$
which yields
$$
I_b^{k'}f(x) =\int_x^b \varphi(y)\omega(y)\,dy,\quad \mbox{ a.e. } x\in [a,b].
$$
Hence,
\begin{eqnarray*}
(D_b^{k'}f)(x)&=&-\left(\frac{1}{\omega(x)}\frac{d}{dx}\right) (I_b^{k'}f)(x)\\
&=& -\left(\frac{1}{\omega(x)}\frac{d}{dx}\right) \int_x^b \varphi(y)\omega(y)\,dy\\
&=&\frac{1}{\omega(x)} \varphi(x) \omega(x)\\
&=& \varphi(x),\quad \mbox{ a.e. } x\in [a,b],
\end{eqnarray*}
which implies that 
\begin{equation}\label{JAL2b}
I_b^k \left(D_b^{k'}f\right)(x)=(I_b^k \varphi)(x),\quad \mbox{ a.e. } x\in [a,b].
\end{equation}
Combining \eqref{JAL1b} with \eqref{JAL2b}, \eqref{inv2b} follows.
\end{proof}

Note that in general, \eqref{inv2} and \eqref{inv2b} are not true for any $f\in L^1([a,b];\mathbb{R})$. The following results show this fact.

\begin{theorem}\label{TN3}
Let $f\in L^1([a,b];\mathbb{R})$ be such that $I_a^{k'}f$
has an absolutely continuous representative. Then
\begin{equation}\label{inv3}
I_a^k \left(D_a^{k'}f\right)(x)=f(x)-(I_a^{k'}f)(a) \left(\frac{1}{\omega(x)}\frac{d}{dx}\right)\delta_{k,\mathds{1}}(x,a),\quad \mbox{ a.e. } x\in [a,b].
\end{equation}
\end{theorem}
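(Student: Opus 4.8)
The plan is to reduce everything to the inversion law of Theorem \ref{TN1} and the composition rule of Theorem \ref{pr2}, the only genuine work being the careful tracking of absolutely continuous representatives. First I would set $C:=(I_a^{k'}f)(a)$, which is meaningful because $I_a^{k'}f$ is identified with its absolutely continuous representative $\psi$, and put $h:=D_a^{k'}f$. Here $h\in L^1([a,b];\mathbb{R})$, since $\psi'\in L^1([a,b];\mathbb{R})$ and $h=\psi'/\omega$ with $\omega^{-1}\in L^\infty([a,b];\mathbb{R})$. As $\psi(x)=C+\int_a^x \psi'(t)\,dt$ and $\psi'=\omega\,h$ a.e., the fundamental theorem of calculus gives
$$
(I_a^{k'}f)(x)=C+\int_a^x h(y)\,\omega(y)\,dy,\quad \mbox{a.e. } x\in[a,b].
$$

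Next I would recognize the integral on the right as $I_a^{\mathds{1}}h$, where $\mathds{1}$ is the unit kernel \eqref{onef}. Since $k$ and $k'$ are conjugate, $\delta_{k',k}=\mathds{1}$ and $k'\mathcal{R}k$, so Theorem \ref{pr2} yields $I_a^{\mathds{1}}h=I_a^{k'}(I_a^k h)$. Combining this with the displayed identity and using linearity gives
$$
I_a^{k'}\big(f-I_a^k h\big)=C\quad\mbox{(the constant function)}.
$$
To invert $I_a^{k'}$ I would apply Theorem \ref{TN1} with the roles of $k$ and $k'$ interchanged, which is legitimate because $k\in\mathrm{conj}(k')\Leftrightarrow k'\in\mathrm{conj}(k)$; this shows $D_a^k\circ I_a^{k'}=\mathrm{id}$ on $L^1([a,b];\mathbb{R})$ (in particular $I_a^{k'}$ is injective). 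Applying $D_a^k$ to the last equation gives $f-I_a^k h=D_a^k(C)$, and Proposition \ref{ex21} (used with the kernel $k$) identifies $D_a^k(C)=C\,(\tfrac1\omega\tfrac{d}{dx})\delta_{k,\mathds{1}}(\cdot,a)$, which is exactly \eqref{inv3}.

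I expect the one delicate point to be this last application of $D_a^k$ to the constant $C$: it presupposes that $\delta_{k,\mathds{1}}(\cdot,a)=I_a^k(1)$ (Proposition \ref{ex1}) admits an absolutely continuous representative, which is precisely the hypothesis of Proposition \ref{ex21}. Rather than assume it, I would derive it from the relation $I_a^{k'}(f-I_a^k h)=C$ itself: composing once more with $I_a^k$ and using $\delta_{k,k'}=\mathds{1}$ in Theorem \ref{pr2} gives
$$
C\,\delta_{k,\mathds{1}}(x,a)=I_a^k(C)(x)=I_a^{\mathds{1}}\big(f-I_a^k h\big)(x)=\int_a^x\big(f-I_a^k h\big)(y)\,\omega(y)\,dy,
$$
whose right-hand side is absolutely continuous. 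Hence, when $C\neq 0$, the function $\delta_{k,\mathds{1}}(\cdot,a)$ inherits an absolutely continuous representative, may be differentiated, and applying $\tfrac1\omega\tfrac{d}{dx}$ to both sides returns $f-I_a^k h=C\,(\tfrac1\omega\tfrac{d}{dx})\delta_{k,\mathds{1}}(\cdot,a)$ directly; the case $C=0$ is immediate from the injectivity of $I_a^{k'}$, the correction term vanishing. This representative bookkeeping — ensuring each use of the fundamental theorem of calculus is licensed — is the only real obstacle, since the algebra is forced once Theorems \ref{pr2} and \ref{TN1} are in hand.
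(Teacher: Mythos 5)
Your proposal is correct and, once you discard the initial detour through $D_a^k$ and Proposition \ref{ex21} in favour of composing with $I_a^k$ and differentiating the identity $C\,\delta_{k,\mathds{1}}(x,a)=\int_a^x\bigl(f-I_a^k h\bigr)(y)\,\omega(y)\,dy$, it coincides with the paper's proof: both derive $I_a^{k'}\bigl(f-I_a^k(D_a^{k'}f)\bigr)=(I_a^{k'}f)(a)$ from the absolutely continuous representative and the conjugacy $\delta_{k',k}=\mathds{1}$, then apply $I_a^k$ and invoke Proposition \ref{ex1}. Your explicit justification that $\delta_{k,\mathds{1}}(\cdot,a)$ inherits an absolutely continuous representative (and your separate treatment of the case $C=0$ via injectivity) is a small point of added care that the paper leaves implicit.
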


\begin{proof}
Since $I_a^{k'}f$ has an absolutely continuous representative, then it can be identified to an absolutely
continuous function in $[a,b]$. Therefore, there exists some $\varphi\in L^1([a,b];\mathbb{R})$ such that
$$
(I_a^{k'}f)(x)=(I_a^{k'}f)(a)+\int_a^x \varphi(z)\,dz,\quad 
a\leq x\leq b.
$$
Therefore, 
$$
\frac{d}{dx} (I_a^{k'}f)(x)=\varphi(x),\quad \mbox{ a.e. } x\in [a,b],
$$
which yields
\begin{equation}\label{nsitt}
(D_a^{k'}f)(x)=\varphi(x)\omega^{-1}(x),\quad \mbox{ a.e. } x\in [a,b].
\end{equation}
On the other hand, since $k$ and $k'$ are conjugate, we have
\begin{eqnarray*}
(I_a^{k'}f)(x)&=&(I_a^{k'}f)(a)+\int_a^x \omega^{-1}(z) \varphi(z)\omega(z)\,dz\\
&=& (I_a^{k'}f)(a)+ (I_a^{\mathds{1}}\omega^{-1}\varphi)(x)\\
&=& (I_a^{k'}f)(a)+\left(I_a^{\delta_{k',k}}\omega^{-1}\varphi\right)(x),\quad  \mbox{ a.e. } x\in [a,b].
\end{eqnarray*}
By Theorem \ref{pr2}, we have
$$
\left(I_a^{\delta_{k',k}}\omega^{-1}\varphi\right)(x)=I_a^{k'}\left(I_a^k \omega^{-1}\varphi\right)(x),
\quad \mbox{ a.e. } x\in [a,b].
$$
Therefore,
$$
(I_a^{k'}f)(x)=(I_a^{k'}f)(a)+ I_a^{k'}\left(I_a^k\omega^{-1}\varphi\right)(x),\quad \mbox{ a.e. } x\in [a,b],
$$
which yields
$$
I_a^{k'}\left(f-I_a^k\omega^{-1}\varphi\right)(x)=(I_a^{k'}f)(a),\quad \mbox{ a.e. } x\in [a,b].
$$
Then, we obtain
\begin{equation}\label{gjl}
I_a^k \left(I_a^{k'}\left(f-I_a^k\omega^{-1}\varphi\right)\right)(x)=I_a^k\left((I_a^{k'}f)(a)\right)(x),\quad \mbox{ a.e. } x\in [a,b].
\end{equation}
Using Proposition \ref{ex1}, we obtain
\begin{equation}\label{hmd}
I_a^k\left((I_a^{k'}f)(a)\right)(x)=(I_a^{k'}f)(a) \delta_{k,\mathds{1}}(x,a),\quad \mbox{ a.e. } x\in [a,b].
\end{equation}
Again, using Theorem \ref{pr2} and the fact that $k$ and $k'$ are conjugate,  we get
$$
I_a^k \left(I_a^{k'}\left(f-I_a^k\omega^{-1}\varphi\right)\right)(x)=
I_a^{\mathds{1}} \left(f-I_a^k\omega^{-1}\varphi\right)(x),\quad \mbox{ a.e. } x\in [a,b],
$$
i.e.,
\begin{equation}\label{hmD}
I_a^k \left(I_a^{k'}\left(f-I_a^k\omega^{-1}\varphi\right)\right)(x)= \int_a^x \left(f-I_a^k\omega^{-1}\varphi\right)(y)\omega(y)\,dy, \quad \mbox{ a.e. } x\in [a,b].
\end{equation}
Using \eqref{gjl}, \eqref{hmd} and \eqref{hmD}, we obtain
$$
\int_a^x \left(f-I_a^k\omega^{-1}\varphi\right)(y)\omega(y)\,dy=(I_a^{k'}f)(a) \delta_{k,\mathds{1}}(x,a),\quad \mbox{ a.e. } x\in [a,b],
$$
which yields
$$
\left(f-I_a^k\omega^{-1}\varphi\right)(x)=(I_a^{k'}f)(a) \left(\frac{1}{\omega(x)}\frac{d}{dx}\right) \delta_{k,\mathds{1}}(x,a),\quad \mbox{ a.e. } x\in [a,b],
$$
i.e.,
$$
(I_a^k\omega^{-1}\varphi)(x)=f(x)-(I_a^{k'}f)(a) \left(\frac{1}{\omega(x)}\frac{d}{dx}\right) \delta_{k,\mathds{1}}(x,a),\quad \mbox{ a.e. } x\in [a,b].
$$
Finally, by \eqref{nsitt}, we obtain
$$
I_a^k \left(D_a^{k'}f\right)(x)=f(x)-(I_a^{k'}f)(a) \left(\frac{1}{\omega(x)}\frac{d}{dx}\right) \delta_{k,\mathds{1}}(x,a),\quad \mbox{ a.e. } x\in [a,b],
$$
which proves \eqref{inv3}.
\end{proof}

Using a similar argument as above, we obtain the following result.

\begin{theorem}\label{TN3b}
Let $f\in L^1([a,b];\mathbb{R})$ be such that $I_b^{k'}f$
has an absolutely continuous representative. Then
$$
I_b^k \left(D_b^{k'}f\right)(x)=f(x)+(I_b^{k'}f)(b) \left(\frac{1}{\omega(x)}\frac{d}{dx}\right) \delta_{\mathds{1},k}(b,x),\quad \mbox{ a.e. } x\in [a,b].
$$
\end{theorem}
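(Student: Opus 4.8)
The plan is to run the proof of Theorem~\ref{TN3} verbatim at the right endpoint, the only genuine subtlety being the careful tracking of two sign reversals that are absent in the left-sided case. First I would use the hypothesis that $I_b^{k'}f$ has an absolutely continuous representative to write
$$
(I_b^{k'}f)(x)=(I_b^{k'}f)(b)-\int_x^b \varphi(z)\,dz,\quad a\leq x\leq b,
$$
for a suitable $\varphi\in L^1([a,b];\mathbb{R})$, so that $\frac{d}{dx}(I_b^{k'}f)(x)=\varphi(x)$ and hence, by the definition of the right-sided derivative, $(D_b^{k'}f)(x)=-\omega^{-1}(x)\varphi(x)$ for a.e. $x\in[a,b]$.

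Next I would rewrite $\int_x^b \varphi(z)\,dz=\int_x^b \omega^{-1}(z)\varphi(z)\,\omega(z)\,dz$ and, exactly as in the proof of Theorem~\ref{TN1}, invoke Theorem~\ref{pr2b} together with the conjugacy relation $\delta_{k,k'}=1$ to identify this integral with $I_b^{k'}\big(I_b^k\omega^{-1}\varphi\big)(x)$. Substituting back and using the linearity of $I_b^{k'}$ then gives
$$
I_b^{k'}\big(f+I_b^k\omega^{-1}\varphi\big)(x)=(I_b^{k'}f)(b),\quad \mbox{ a.e. } x\in[a,b],
$$
whose right-hand side is constant in $x$.

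I would then apply $I_b^k$ to both sides. On the left, Theorem~\ref{pr2b} and the relation $\delta_{k',k}=1$ collapse $I_b^k\circ I_b^{k'}$ into $I_b^{\mathds{1}}$, i.e.\ into the operator $h\mapsto \int_x^b h(y)\,\omega(y)\,dy$; on the right, Proposition~\ref{ex1b} turns the constant into $(I_b^{k'}f)(b)\,\delta_{\mathds{1},k}(b,x)$. This yields
$$
\int_x^b \big(f+I_b^k\omega^{-1}\varphi\big)(y)\,\omega(y)\,dy=(I_b^{k'}f)(b)\,\delta_{\mathds{1},k}(b,x),\quad \mbox{ a.e. } x\in[a,b].
$$
Applying $-\frac{1}{\omega(x)}\frac{d}{dx}$ to both sides, the identity $\frac{d}{dx}\int_x^b h(y)\omega(y)\,dy=-h(x)\omega(x)$ cancels the external minus on the left and recovers $\big(f+I_b^k\omega^{-1}\varphi\big)(x)$, while the right-hand side becomes $-(I_b^{k'}f)(b)\left(\frac{1}{\omega(x)}\frac{d}{dx}\right)\delta_{\mathds{1},k}(b,x)$. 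Recalling that $\omega^{-1}\varphi=-D_b^{k'}f$, so that $I_b^k\omega^{-1}\varphi=-I_b^k\big(D_b^{k'}f\big)$, a final rearrangement produces the asserted formula.

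The computational content is identical to that of Theorem~\ref{TN3}, so I expect no analytic obstacle; the one place demanding genuine care is the sign bookkeeping. The plus sign in the conclusion, in contrast with the minus appearing in Theorem~\ref{TN3}, arises precisely from the interplay of the minus in the definition of $D_b^{k'}$ with the minus produced upon differentiating an integral of the form $\int_x^b$, and one must also confirm that the boundary term is governed by $\delta_{\mathds{1},k}(b,x)$ via Proposition~\ref{ex1b} rather than by $\delta_{k,\mathds{1}}(x,a)$.
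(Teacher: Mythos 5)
Your proposal is correct, and it is exactly the argument the paper intends: the paper gives no separate proof of Theorem \ref{TN3b}, stating only that it follows ``using a similar argument'' to Theorem \ref{TN3}, and your right-endpoint transcription—anchoring the absolutely continuous representative at $b$, using $\delta_{k,k'}=\delta_{k',k}=\mathds{1}$ via Theorem \ref{pr2b}, invoking Proposition \ref{ex1b} for the constant term, and tracking the two sign reversals from $D_b^{k'}$ and from differentiating $\int_x^b$—is precisely that argument carried out correctly.
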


We end this section with some standard examples of $k'$-derivatives.

\begin{example}\label{exRLD}
Let $0<\alpha<1$ and  
$$
\omega(x)=1,\quad a\leq x\leq b.
$$
We define the kernel-function $k'_\alpha$ by
$$
k'_\alpha(x,y)=\frac{(x-y)^{-\alpha} }{\Gamma(1-\alpha)},\quad (x,y)\in \Omega.
$$
We mentioned in Example \ref{exconjRL} that 
$$
k_\alpha(x,y)=\frac{(x-y)^{\alpha-1} }{\Gamma(\alpha)},\quad (x,y)\in \Omega,
$$
belongs to $\mbox{conj}(k'_\alpha)$, which means that 
$$
\mbox{conj}(k'_\alpha)\neq\emptyset.
$$
It can be  seen (see \cite{KST,SKM}) that 
$D_a^{k'_\alpha}$ is the left-sided Riemann-Liouville fractional derivative of order $\alpha$ and 
$D_b^{k'_\alpha}$ is the right-sided Riemann-Liouville fractional derivative of order $\alpha$. Therefore, the most well-known properties related to Riemann-Liouville fractional derivatives can be deduced  from the obtained results in this section.
\end{example}

\begin{example}\label{exHD}
Let $(a,b)\in \mathbb{R}^2$ be such that $0<a<b$. Let $0<\alpha<1$ and
$$
\omega(x)=\frac{1}{x},\quad x\in [a,b].
$$
We define the kernel-function $k'_\alpha$ by
$$
k'_\alpha(x,y)=\frac{1}{\Gamma(1-\alpha)} \left(\ln \frac{x}{y}\right)^{-\alpha},\quad (x,y)\in \Omega.
$$
We mentioned in Example \ref{exconjHD} that
$$
k_\alpha(x,y)=\frac{1}{\Gamma(\alpha)} \left(\ln \frac{x}{y}\right)^{\alpha-1},\quad (x,y)\in \Omega
$$
belongs to $\mbox{conj}(k'_\alpha)$, which means that 
$$
\mbox{conj}(k'_\alpha)\neq\emptyset.
$$
It can be seen (see \cite{KST,SKM}) that 
$D_a^{k'_\alpha}$ is the left-sided Hadamard fractional derivative of order $\alpha$ and 
$D_b^{k'_\alpha}$ is the right-sided Hadamard fractional derivative of order $\alpha$. Therefore, the most well-known properties related to Hadamard fractional derivatives can be deduced  from the obtained results in this section.
\end{example}

\section{New fractional operators}\label{sec5}

In this section, using the general approach presented previously, new fractional operators, which are different to those existing in the literature,  are introduced. Moreover, several interesting properties related to this kind of operators  are established. First, we need the following definitions and properties (see, for example \cite{AS}).

The exponential integral function is defined as:
$$
E_1(x)=\int_x^\infty \frac{e^{-t}}{t}\,dt,\quad  x>0.
$$
The lower incomplete gamma function is defined as:
$$
\gamma(s,x)=\int_0^x t^{s-1}e^{-t} \,dt,\quad s>0,\, x>0.
$$
The regularized lower Gamma function is given by 
$$
P(s,x)=\frac{\gamma(s,x)}{\Gamma(s)}, \quad s>0,\, x>0.
$$
The derivative of $P(s,x)$ with respect to $x$ is given by
\begin{equation}\label{dig}
\frac{d}{dx}P(s,x)=\frac{x^{s-1} e^{-x}}{\Gamma(s)},\quad s>0,\, x>0.
\end{equation}

Next, we take $[a,b]=[0,1]$ and 
$$
\omega(x)=1,\quad x\in [0,1].
$$
In this case, we have
$$
\Omega=\{(x,y)\in [0,1]\times [0,1]:\, x>y\}.
$$
For $\alpha>0$, we define the  kernel-functions
$$
k_\alpha(x,y)=\left(\int_0^\infty \frac{\left(\frac{x-y}{\alpha}\right)^{s-1}}{\Gamma(s)}\,ds\right)e^{-\left(\frac{x-y}{\alpha}\right)},\quad (x,y)\in \Omega
$$
and
$$
k'_\alpha(x,y)=\frac{1}{\alpha}E_1\left(\frac{x-y}{\alpha}\right),\quad (x,y)\in \Omega.
$$

\begin{proposition}\label{kk'}
For all $\alpha>0$, we have
$$
(k_\alpha,k'_\alpha)\in \mathcal{K}_\omega\times \mathcal{K}_\omega.
$$
\end{proposition}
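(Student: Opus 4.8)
The plan is to exploit the two special features of this setting: the weight is trivial, $\omega\equiv 1$, and both kernels are of convolution type. Writing
\[
\mathbb{K}_\alpha(u)=\left(\int_0^\infty \frac{(u/\alpha)^{s-1}}{\Gamma(s)}\,ds\right)e^{-u/\alpha},\qquad \mathbb{K}'_\alpha(u)=\frac{1}{\alpha}E_1\!\left(\frac{u}{\alpha}\right),\quad u\in(0,1),
\]
we have $k_\alpha(x,y)=\mathbb{K}_\alpha(x-y)$ and $k'_\alpha(x,y)=\mathbb{K}'_\alpha(x-y)$. For any nonnegative convolution kernel $k(x,y)=\mathbb{K}(x-y)$ with $\omega\equiv1$, the changes of variable $u=x-y$ and $u=y-x$ give $F_k(y)=\int_0^{1-y}\mathbb{K}(u)\,du$ and $G_k(y)=\int_0^{y}\mathbb{K}(u)\,du$, both dominated by $\|\mathbb{K}\|_{L^1([0,1])}$. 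Hence the whole proposition reduces to checking that $\mathbb{K}_\alpha$ and $\mathbb{K}'_\alpha$ are $\mathbb{R}_+$-valued and lie in $L^1([0,1])$. Positivity and finiteness are immediate for $\mathbb{K}'_\alpha$, since $0<E_1(t)<\infty$ for $t>0$, and for $\mathbb{K}_\alpha$, since the integrand $t^{s-1}/\Gamma(s)$ is positive and, for fixed $t>0$, decays super-exponentially in $s$ (by Stirling), so the $s$-integral converges.

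For $\mathbb{K}'_\alpha$ the $L^1$ bound is the easy part: the substitution $t=u/\alpha$ gives $\int_0^1\mathbb{K}'_\alpha(u)\,du=\int_0^{1/\alpha}E_1(t)\,dt$, and integration by parts using $E_1'(t)=-e^{-t}/t$ together with $tE_1(t)\to0$ both as $t\to0^+$ and as $t\to\infty$ yields $\int_0^\infty E_1(t)\,dt=1$; hence $\int_0^1\mathbb{K}'_\alpha(u)\,du\le1<\infty$.

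The main obstacle is the $L^1$ bound for $\mathbb{K}_\alpha$, where the naive estimate fails: setting $\nu(t):=\int_0^\infty \frac{t^{s-1}}{\Gamma(s)}\,ds$ and integrating over the whole half-line, Tonelli's theorem gives $\int_0^\infty\nu(t)e^{-t}\,dt=\int_0^\infty\frac{\Gamma(s)}{\Gamma(s)}\,ds=\infty$, so one must keep the finite upper limit. Writing $T=1/\alpha$ and substituting $t=u/\alpha$, I would express $\int_0^1\mathbb{K}_\alpha(u)\,du=\alpha\int_0^{T}\nu(t)e^{-t}\,dt$ and swap the order of integration (all integrands nonnegative, so Tonelli applies):
\[
\int_0^{T}\nu(t)e^{-t}\,dt=\int_0^\infty\frac{1}{\Gamma(s)}\left(\int_0^{T}t^{s-1}e^{-t}\,dt\right)ds=\int_0^\infty\frac{\gamma(s,T)}{\Gamma(s)}\,ds=\int_0^\infty P(s,T)\,ds,
\]
which is exactly the regularized lower Gamma function introduced at the start of this section. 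It then remains to show $\int_0^\infty P(s,T)\,ds<\infty$ for finite $T$; this follows from the elementary bound $\gamma(s,T)\le\int_0^{T}t^{s-1}\,dt=T^s/s$, which gives $P(s,T)\le T^s/\Gamma(s+1)$, so that $\int_0^\infty P(s,T)\,ds\le\int_0^\infty \frac{T^s}{\Gamma(s+1)}\,ds<\infty$, the last integral converging because $\Gamma(s+1)$ outgrows $T^s$ super-exponentially. Combining the two $L^1$ estimates with the reduction of the first paragraph shows $F_{k_\alpha},G_{k_\alpha},F_{k'_\alpha},G_{k'_\alpha}\in L^\infty$, i.e. $(k_\alpha,k'_\alpha)\in\mathcal{K}_\omega\times\mathcal{K}_\omega$.
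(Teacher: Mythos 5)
Your proof is correct and follows essentially the same route as the paper's: the same change of variables $t=(x-y)/\alpha$, the same application of Tonelli to rewrite the relevant integral for $k_\alpha$ as $\alpha\int_0^\infty P(s,T)\,ds$ with $P$ the regularized lower Gamma function, and an equivalent computation for $k'_\alpha$ (the paper evaluates $F_{k'_\alpha}(y)=1+XE_1(X)-e^{-X}$ and bounds it by $1$ via $zE_1(z)\le e^{-z}$, which amounts to your identity $\int_0^\infty E_1(t)\,dt=1$). Two points where you go beyond the paper are worth noting. First, you package the $F$ and $G$ estimates together by observing that for a convolution kernel with $\omega\equiv 1$ both are dominated by $\|\mathbb{K}\|_{L^1([0,1])}$, whereas the paper handles $G_{k_\alpha}$ and $G_{k'_\alpha}$ only by ``a similar calculation.'' Second, and more substantively, the paper stops at the bound $|F_{k_\alpha}(y)|\le\alpha\int_0^\infty P\left(s,\frac{1}{\alpha}\right)ds$ and declares $F_{k_\alpha}\in L^\infty$ without verifying that this quantity is finite; you supply the missing estimate $P(s,T)\le T^s/\Gamma(s+1)$ together with the convergence of $\int_0^\infty T^s/\Gamma(s+1)\,ds$. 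That step is not cosmetic: as your observation that $\int_0^\infty\nu(t)e^{-t}\,dt=\int_0^\infty ds=\infty$ shows, finiteness genuinely depends on the truncation of the $t$-integral at $T<\infty$, so your version closes a real gap in the published argument.
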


\begin{proof}
Let us fix $\alpha>0$.  Given $0\leq y<1$, we have
\begin{eqnarray*}
|F_{k_\alpha}(y)| &=& \int_y^1 k_\alpha(x,y)\,dx \\
&=& \int_y^1 \left(\int_0^\infty \frac{\left(\frac{x-y}{\alpha}\right)^{s-1}}{\Gamma(s)}\,ds\right)e^{-\left(\frac{x-y}{\alpha}\right)}\,dx\\
&=& \alpha \int_0^{\frac{1-y}{\alpha}} \left(\int_0^\infty \frac{z^{s-1}}{\Gamma(s)}\,ds\right)e^{-z}\,dz\\
&=& \alpha \int_0^\infty \left(\int_0^{\frac{1-y}{\alpha}} z^{s-1} e^{-z}\,dz\right) \frac{1}{\Gamma(s)}\,ds\\
&=&\alpha \int_0^\infty \frac{\gamma\left(s,\frac{1-y}{\alpha}\right)}{\Gamma(s)}\,ds\\
&=&\alpha \int_0^\infty P\left(s,\frac{1-y}{\alpha}\right)\,ds.
\end{eqnarray*}
On the other hand, by \eqref{dig}, we know that $P(s,\cdot)$ is non-decreasing in $\mathbb{R}_+$. Therefore, we get
$$
|F_{k_\alpha}(y)|\leq \alpha \int_0^\infty P\left(s,\frac{1}{\alpha}\right)\,ds,\quad 0\leq y<1,
$$
which yields $F_{k_\alpha}\in L^\infty([0,1[;\mathbb{R})$. Using a similar calculation, we obtain $G_{k_\alpha}\in L^\infty([0,1[;\mathbb{R})$. Therefore, we have $k_\alpha\in \mathcal{K}_\omega$. Further, for $0\leq y<1$,  a simple calculation yields
$$
|F_{k'_\alpha}(y)|=1+ XE_1(X)-e^{-X},
$$
where
$$
X=\frac{1-y}{\alpha}>0.
$$
On the other hand, observe that
$$
zE_1(z)\leq e^{-z},\quad z>0.
$$ 
Therefore, we deduce that 
$$
|F_{k'_\alpha}(y)|\leq 1,\quad 0\leq y<1,
$$
which yields $F_{k'_\alpha}\in L^\infty([0,1[;\mathbb{R})$. Using a similar calculation, we get  $G_{k'_\alpha}\in L^\infty(]0,1];\mathbb{R})$. Therefore, we have $k'_\alpha\in \mathcal{K}_\omega$.
\end{proof}

\begin{lemma}\label{Laplace}
We have
$$
\left(\mathcal{L}E_1\right)(\lambda)=\frac{\ln(1+\lambda)}{\lambda},\quad \lambda>0
$$
and
$$
\left(\mathcal{L}F\right)(\lambda)=\frac{1}{\ln(1+\lambda)},\quad \lambda>0,
$$
where $\mathcal{L}$ is the Laplace transform operator and
\begin{equation}\label{FF}
F(\lambda)=\left(\int_0^\infty \frac{\lambda^{t-1}}{\Gamma(t)}\,dt\right)e^{-\lambda},\quad \lambda>0.
\end{equation}
\end{lemma}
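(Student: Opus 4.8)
The plan is to treat both identities as direct Laplace-transform computations, in each case exploiting the integral representation of the function being transformed and interchanging the order of integration via Fubini's theorem. Since every integrand that appears is nonnegative, Tonelli's theorem guarantees that the interchange is always legitimate in $[0,+\infty]$, and the finiteness of the resulting value (computed below) then upgrades this to the full conclusion of Fubini's theorem.

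For the first identity I would start from the definition
$$
(\mathcal{L}E_1)(\lambda)=\int_0^\infty e^{-\lambda x}\left(\int_x^\infty \frac{e^{-t}}{t}\,dt\right)dx
$$
and swap the order of integration over the region $\{0<x<t<\infty\}$. The inner integral in $x$ becomes $\int_0^t e^{-\lambda x}\,dx=\frac{1-e^{-\lambda t}}{\lambda}$, leaving
$$
(\mathcal{L}E_1)(\lambda)=\frac{1}{\lambda}\int_0^\infty \frac{e^{-t}-e^{-(1+\lambda)t}}{t}\,dt .
$$
This is a Frullani-type integral. I would evaluate $I(\lambda):=\int_0^\infty \frac{e^{-t}-e^{-(1+\lambda)t}}{t}\,dt$ by differentiating under the integral sign: since $\partial_\lambda\bigl[\frac{e^{-t}-e^{-(1+\lambda)t}}{t}\bigr]=e^{-(1+\lambda)t}$, one gets $I'(\lambda)=\int_0^\infty e^{-(1+\lambda)t}\,dt=\frac{1}{1+\lambda}$, and together with $I(0)=0$ this yields $I(\lambda)=\ln(1+\lambda)$, hence $(\mathcal{L}E_1)(\lambda)=\frac{\ln(1+\lambda)}{\lambda}$.

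For the second identity I would insert the representation \eqref{FF} and interchange the order of integration:
$$
(\mathcal{L}F)(\lambda)=\int_0^\infty e^{-(1+\lambda)x}\left(\int_0^\infty \frac{x^{t-1}}{\Gamma(t)}\,dt\right)dx=\int_0^\infty \frac{1}{\Gamma(t)}\left(\int_0^\infty e^{-(1+\lambda)x}x^{t-1}\,dx\right)dt .
$$
The inner $x$-integral is the standard Gamma integral, equal to $\Gamma(t)(1+\lambda)^{-t}$, which cancels the factor $1/\Gamma(t)$, so
$$
(\mathcal{L}F)(\lambda)=\int_0^\infty (1+\lambda)^{-t}\,dt=\int_0^\infty e^{-t\ln(1+\lambda)}\,dt=\frac{1}{\ln(1+\lambda)},
$$
the last step using $\ln(1+\lambda)>0$ for $\lambda>0$.

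The routine parts are the elementary $x$-integrals. The steps requiring genuine care are the applications of Fubini/Tonelli and, for the first identity, the fact that a naive integration by parts fails because $E_1$ carries a logarithmic singularity at the origin; the Fubini route sidesteps this entirely. The only remaining delicate point is justifying the differentiation under the integral sign for $I(\lambda)$, which I would obtain from a dominated-convergence bound uniform in $\lambda$ on compact subsets of $(0,+\infty)$. I expect this parameter-differentiation justification, together with the bookkeeping of the Frullani cancellation, to be the main obstacle; the second identity is comparatively immediate once the Gamma integral is invoked.
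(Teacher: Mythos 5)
Your argument is correct, but there is nothing in the paper to compare it against: Lemma \ref{Laplace} is stated without any proof (the two transforms are classical, implicitly taken from tables such as \cite{AS}; the second is essentially the Laplace transform of the derivative of the Volterra function). Your computation therefore supplies details the authors omit. Both Tonelli interchanges are legitimate exactly as you say, the Gamma-integral evaluation $\int_0^\infty e^{-(1+\lambda)x}x^{t-1}\,dx=\Gamma(t)(1+\lambda)^{-t}$ is valid for all $t>0$, $\lambda>0$, and the final integral $\int_0^\infty(1+\lambda)^{-t}\,dt$ converges precisely because $\ln(1+\lambda)>0$, so the second identity is complete. For the first identity you could simply quote the Frullani formula $\int_0^\infty\frac{e^{-at}-e^{-bt}}{t}\,dt=\ln(b/a)$, but your differentiation-under-the-integral derivation works; the one cosmetic point is that you anchor the antiderivative at $\lambda=0$ while claiming domination only on compact subsets of the open half-line $(0,+\infty)$. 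The bound $\bigl|e^{-(1+\lambda)t}\bigr|\le e^{-t}$, valid for all $\lambda\ge 0$, gives domination uniformly on $[0,+\infty)$, so $I'(\lambda)=\frac{1}{1+\lambda}$ holds down to $\lambda=0$ and the anchoring $I(0)=0$ is justified; alternatively, the estimate $0\le\frac{e^{-t}(1-e^{-\lambda t})}{t}\le\lambda e^{-t}$ shows directly that $I(\lambda)\to 0$ as $\lambda\to 0^+$. With that half-line fixed, the proof is complete.
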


\begin{proposition}\label{conjpn}
For every $\alpha>0$, the kernel-functions $k_\alpha$ and $k'_\alpha$ are conjugate.
\end{proposition}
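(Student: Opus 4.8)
The plan is to reduce the conjugacy identity to a Sonine-type convolution identity and then verify it by the Laplace transform. Since here $\omega\equiv 1$ and both $k_\alpha$ and $k'_\alpha$ are difference kernels, I would write $k_\alpha(x,y)=\mathbb{K}_1(x-y)$ with $\mathbb{K}_1(t)=F(t/\alpha)$ (where $F$ is the function in \eqref{FF}) and $k'_\alpha(x,y)=\mathbb{K}_2(x-y)$ with $\mathbb{K}_2(t)=\frac{1}{\alpha}E_1(t/\alpha)$. By the convolution computation recorded in Section \ref{sec3}, for $(x,y)\in\Omega$ one has
$$
\delta_{k_\alpha,k'_\alpha}(x,y)=(\mathbb{K}_1*\mathbb{K}_2)(x-y),\qquad \delta_{k'_\alpha,k_\alpha}(x,y)=(\mathbb{K}_2*\mathbb{K}_1)(x-y).
$$
By commutativity of the convolution product these two quantities coincide, so it suffices to prove the single identity $(\mathbb{K}_1*\mathbb{K}_2)(t)=1$ for every $t>0$, which is exactly the Sonine condition \eqref{Sonine} for the pair $(\mathbb{K}_1,\mathbb{K}_2)$.

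To establish this, I would apply the Laplace transform together with the convolution theorem $\mathcal{L}(\mathbb{K}_1*\mathbb{K}_2)=(\mathcal{L}\mathbb{K}_1)(\mathcal{L}\mathbb{K}_2)$. The key inputs are Lemma \ref{Laplace}, which supplies $(\mathcal{L}E_1)(\lambda)=\ln(1+\lambda)/\lambda$ and $(\mathcal{L}F)(\lambda)=1/\ln(1+\lambda)$, together with the elementary scaling rule $\mathcal{L}[g(\cdot/\alpha)](\lambda)=\alpha\,(\mathcal{L}g)(\alpha\lambda)$. Applying the scaling rule gives
$$
(\mathcal{L}\mathbb{K}_1)(\lambda)=\alpha\,(\mathcal{L}F)(\alpha\lambda)=\frac{\alpha}{\ln(1+\alpha\lambda)},\qquad (\mathcal{L}\mathbb{K}_2)(\lambda)=(\mathcal{L}E_1)(\alpha\lambda)=\frac{\ln(1+\alpha\lambda)}{\alpha\lambda},
$$
whence their product collapses to $\mathcal{L}(\mathbb{K}_1*\mathbb{K}_2)(\lambda)=1/\lambda$. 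Since $1/\lambda$ is the Laplace transform of the constant function equal to $1$, and the Laplace transform is injective on the relevant class of locally integrable functions, I conclude $(\mathbb{K}_1*\mathbb{K}_2)(t)=1$ for all $t>0$, so that $\delta_{k_\alpha,k'_\alpha}\equiv\delta_{k'_\alpha,k_\alpha}\equiv 1$ on $\Omega$; that is, $k_\alpha$ and $k'_\alpha$ are conjugate in the sense of Definition \ref{dcc}.

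The routine parts are the convolution reduction (already carried out in Section \ref{sec3}) and the product cancellation, which is immediate once the scaling is applied. The main obstacle is the analytic justification supporting the Laplace-transform argument: one must check that $\mathbb{K}_1$ and $\mathbb{K}_2$ are locally integrable with well-defined Laplace transforms (the membership $k_\alpha,k'_\alpha\in\mathcal{K}_\omega$ from Proposition \ref{kk'} provides the needed integrability near the origin), that Fubini's theorem applies so that the convolution theorem is valid here, and---most delicately---that Lemma \ref{Laplace} is in hand, since it is precisely the transform $(\mathcal{L}F)(\lambda)=1/\ln(1+\lambda)$ of the somewhat unusual function $F$ that makes the product telescope to $1/\lambda$. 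Granting Lemma \ref{Laplace} and the standard injectivity of the Laplace transform, the argument is complete.
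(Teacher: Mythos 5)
Your proposal is correct and follows essentially the same route as the paper: both reduce the conjugacy identity to the single convolution identity $E_1*F\equiv 1$ (using commutativity of convolution) and verify it via the Laplace transform with Lemma \ref{Laplace}, the only cosmetic difference being that the paper rescales by the change of variable $t=(z-y)/\alpha$ before transforming, whereas you keep the $\alpha$-scaled kernels and invoke the Laplace scaling rule.
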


\begin{proof}
Let us fix $\alpha>0$. To show that $k_\alpha$ and $k'_\alpha$ are conjugate kernels, we have to prove that 
\begin{equation}\label{apr}
\delta_{k_\alpha,k'_\alpha}(x,y)=\delta_{k'_\alpha,k_\alpha}(x,y)=1,\quad (x,y)\in \Omega.
\end{equation}
Let $(x,y)\in \Omega$ be fixed. We have
\begin{eqnarray*}
\delta_{k'_\alpha,k_\alpha}(x,y)&=&\int_y^x k'_\alpha(x,z)k_\alpha(z,y)\,dz\\
&=& \int_y^x \frac{1}{\alpha}E_1\left(\frac{x-z}{\alpha}\right) F\left(\frac{z-y}{\alpha}\right)\,dz,
\end{eqnarray*}
where $F$ is the function given by \eqref{FF}. Using the change of variable $t=\frac{z-y}{\alpha}$, we obtain
$$
\delta_{k'_\alpha,k_\alpha}(x,y)=\int_0^{\frac{x-y}{\alpha}} E_1\left(\frac{x-y}{\alpha}-t\right)F(t)\,dt,
$$
i.e.,
$$
\delta_{k'_\alpha,k_\alpha}(x,y)=(E_1*F)\left(\frac{x-y}{\alpha}\right).
$$
Since the convolution product is symmetric, we have
$$
\delta_{k'_\alpha,k_\alpha}(x,y)=\delta_{k_\alpha,k'_\alpha}(x,y).
$$
Therefore, to show \eqref{apr}, we have just to prove that
\begin{equation}\label{naim}
(E_1*F)\left(\frac{x-y}{\alpha}\right)=1.
\end{equation}
On the other hand, we have
$$
\mathcal{L}(E_1*F)(\lambda)=\mathcal{L}(E_1)(\lambda)\,\mathcal{L}(F)(\lambda),\quad \lambda>0,
$$
which gives us by Lemma  \ref{Laplace} that
$$
\mathcal{L}(E_1*F)(\lambda)=\frac{1}{\lambda},\quad \lambda>0.
$$
Note that 
$$
\frac{1}{\lambda}=\mathcal{L}(1)(\lambda),\quad \lambda>0.
$$
Therefore, we have
$$
\mathcal{L}(E_1*F)=\mathcal{L}(1),
$$
which yields \eqref{naim}.
\end{proof}

Now, we introduce the following fractional integral operators.

\begin{definition}[Fractional integrals of type (I)]
Let $\alpha>0$ and $f\in L^1([0,1];\mathbb{R})$.
The left-sided fractional integral of type (I) of order $\alpha$ of $f$ is given  by
$$
(H_0^\alpha f)(x)=\int_0^x  F\left(\frac{x-y}{\alpha}\right)f(y)\,dy,\quad \mbox{a.e. } x\in [0,1],
$$
where $F$ is given by \eqref{FF}. The right-sided fractional integral of type (I) of order $\alpha$ of $f$ is given  by
$$
(H_1^\alpha f)(x)=\int_x^1  F\left(\frac{y-x}{\alpha}\right)f(y)\,dy,\quad \mbox{a.e. } x\in [0,1].
$$
\end{definition}

\begin{definition}[Fractional integrals of type (II)]
Let $\alpha>0$ and $f\in L^1([0,1];\mathbb{R})$.
The left-sided fractional integral of type (II) of order $\alpha$ of $f$ is given  by
$$
(S_0^\alpha f)(x)=\frac{1}{\alpha}\int_0^x  E_1\left(\frac{x-y}{\alpha}\right)f(y)\,dy,\quad \mbox{a.e. } x\in [0,1].
$$
The right-sided fractional integral of type (II) of order $\alpha$ of $f$ is given  by
$$
(S_1^\alpha f)(x)=\frac{1}{\alpha}\int_x^1  E_1\left(\frac{y-x}{\alpha}\right)f(y)\,dy,\quad \mbox{a.e. } x\in [0,1].
$$
\end{definition}

Next, several properties related to the above fractional integrals can be deduced from the obtained results in Section \ref{sec3}.

\begin{proposition}\label{TH1}
For every $\alpha>0$,  
$$
H_0^\alpha, H_1^\alpha: L^1([0,1];\mathbb{R})\to L^1([0,1];\mathbb{R})
$$
are linear and continuous operators. 
\end{proposition}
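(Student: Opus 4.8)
The plan is to recognize that $H_0^\alpha$ and $H_1^\alpha$ are precisely the left-sided and right-sided $k_\alpha$-integral operators introduced in Section \ref{sec3}, and then to invoke the continuity results already established there. Since we are working with $[a,b]=[0,1]$ and $\omega\equiv 1$, and since the very definition of $F$ in \eqref{FF} gives $k_\alpha(x,y)=F\left(\frac{x-y}{\alpha}\right)$, a direct comparison of the integral formulas shows that
$$
(H_0^\alpha f)(x)=\int_0^x k_\alpha(x,y)f(y)\,dy=(I_0^{k_\alpha}f)(x)
$$
and
$$
(H_1^\alpha f)(x)=\int_x^1 k_\alpha(y,x)f(y)\,dy=(I_1^{k_\alpha}f)(x),
$$
for a.e. $x\in[0,1]$ and every $f\in L^1([0,1];\mathbb{R})$.

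Next I would invoke Proposition \ref{kk'}, which guarantees that $k_\alpha\in\mathcal{K}_\omega$ for every $\alpha>0$; this is exactly the hypothesis needed to feed the abstract theory. With $k_\alpha\in\mathcal{K}_\omega$ in hand, Proposition \ref{pr1} applies verbatim to give that $I_0^{k_\alpha}=H_0^\alpha$ is linear and continuous from $L^1([0,1];\mathbb{R})$ into itself, together with the explicit bound
$$
\|H_0^\alpha f\|_{L^1([0,1];\mathbb{R})}\leq \|F_{k_\alpha}\|_{L^\infty([0,1[;\mathbb{R})}\,\|f\|_{L^1([0,1];\mathbb{R})},
$$
where the weight factors $\|\omega\|_{L^\infty}$ and $\|\omega^{-1}\|_{L^\infty}$ both equal $1$ because $\omega\equiv 1$. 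Symmetrically, Proposition \ref{pr1b} yields the continuity of $I_1^{k_\alpha}=H_1^\alpha$, with the analogous estimate involving $\|G_{k_\alpha}\|_{L^\infty(]0,1];\mathbb{R})}$. Linearity of both operators is immediate from linearity of the integral.

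I expect no genuine obstacle here: the substantive estimates $F_{k_\alpha}\in L^\infty([0,1[;\mathbb{R})$ and $G_{k_\alpha}\in L^\infty(]0,1];\mathbb{R})$ were already carried out inside the proof of Proposition \ref{kk'}, so the only remaining step is the bookkeeping identification of $H_i^\alpha$ with $I_i^{k_\alpha}$. The conceptual point worth stressing is that these new operators are bona fide instances of the abstract $k$-integral framework of Section \ref{sec3}; consequently all the structural properties developed there (the composition rules of Theorems \ref{pr2} and \ref{pr2b}, the integration-by-parts identity of Theorem \ref{INTPA}, and the inversion relations of Theorems \ref{TN1}--\ref{TN3b}) transfer to $H_0^\alpha$ and $H_1^\alpha$ automatically once membership in $\mathcal{K}_\omega$ has been verified.
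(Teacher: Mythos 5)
Your proposal is correct and follows exactly the paper's own argument: identify $H_i^\alpha$ with $I_i^{k_\alpha}$, use Proposition \ref{kk'} to get $k_\alpha\in\mathcal{K}_\omega$, and then apply Propositions \ref{pr1} and \ref{pr1b}. The extra detail you supply (the explicit bound with $\|\omega\|_{L^\infty}=\|\omega^{-1}\|_{L^\infty}=1$) is a harmless elaboration of the same route.
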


\begin{proof}
First, observe that for every $\alpha>0$, we have
\begin{equation}\label{obs1H}
H_i^\alpha=I_i^{k_\alpha},\quad i=0,1.
\end{equation}
Therefore, the desired result follows from Propositions \ref{kk'}, \ref{pr1} and \ref{pr1b}.
\end{proof}

\begin{proposition}\label{TS1}
For every $\alpha>0$,  
$$
S_0^\alpha, S_1^\alpha: L^1([0,1];\mathbb{R})\to L^1([0,1];\mathbb{R})
$$
are linear and continuous operators. 
\end{proposition}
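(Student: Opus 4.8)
The plan is to recognize that $S_0^\alpha$ and $S_1^\alpha$ coincide with the left-sided and right-sided $k'_\alpha$-integral operators, exactly mirroring the identification $H_i^\alpha=I_i^{k_\alpha}$ used in the proof of Proposition \ref{TH1}. Once this reduction is in place, the statement follows by invoking the already-established membership of $k'_\alpha$ in $\mathcal{K}_\omega$ together with the general continuity results for $k$-integral operators.

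First I would verify the two kernel identities directly from the definitions. For the left-sided case, taking $a=0$, $\omega\equiv 1$, and $k=k'_\alpha$ in the definition of $I_a^k$ gives
$$
(I_0^{k'_\alpha} f)(x)=\int_0^x k'_\alpha(x,y) f(y)\,dy=\frac{1}{\alpha}\int_0^x E_1\left(\frac{x-y}{\alpha}\right) f(y)\,dy=(S_0^\alpha f)(x).
$$
For the right-sided case, taking $b=1$, $\omega\equiv 1$, and $k=k'_\alpha$ in the definition of $I_b^k$ gives
$$
(I_1^{k'_\alpha} f)(x)=\int_x^1 k'_\alpha(y,x) f(y)\,dy=\frac{1}{\alpha}\int_x^1 E_1\left(\frac{y-x}{\alpha}\right) f(y)\,dy=(S_1^\alpha f)(x).
$$
Thus $S_i^\alpha=I_i^{k'_\alpha}$ for $i=0,1$.

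Next I would appeal to Proposition \ref{kk'}, which guarantees that $k'_\alpha\in\mathcal{K}_\omega$ for every $\alpha>0$; this is the hypothesis required to apply the general continuity results. With membership in $\mathcal{K}_\omega$ secured, Propositions \ref{pr1} and \ref{pr1b} immediately yield that $I_0^{k'_\alpha}$ and $I_1^{k'_\alpha}$ are linear and continuous operators on $L^1([0,1];\mathbb{R})$, the continuity bounds being governed by $\|F_{k'_\alpha}\|_{L^\infty([0,1[;\mathbb{R})}$ and $\|G_{k'_\alpha}\|_{L^\infty(]0,1];\mathbb{R})}$ respectively, both of which are finite by Proposition \ref{kk'}.

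There is essentially no obstacle in this argument: the entire content lies in the kernel identification, which is immediate from the defining formulas. The only point deserving attention is that $k'_\alpha$ genuinely belongs to $\mathcal{K}_\omega$, but this has already been settled in Proposition \ref{kk'} via the estimate $zE_1(z)\leq e^{-z}$; hence the proof reduces to citing Propositions \ref{kk'}, \ref{pr1}, and \ref{pr1b}.
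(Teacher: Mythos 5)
Your proposal is correct and follows exactly the paper's own argument: identify $S_i^\alpha=I_i^{k'_\alpha}$ for $i=0,1$, then invoke Proposition \ref{kk'} for membership of $k'_\alpha$ in $\mathcal{K}_\omega$ and Propositions \ref{pr1} and \ref{pr1b} for linearity and continuity. You simply spell out the kernel identification more explicitly than the paper does.
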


\begin{proof}
We have just to  observe that for every $\alpha>0$, we have
\begin{equation}\label{obs2S}
S_i^\alpha=I_i^{k'_\alpha},\quad i=0,1.
\end{equation}
Next, the desired result follows from Propositions \ref{kk'}, \ref{pr1} and \ref{pr1b}.
\end{proof}

\begin{remark}
Note that unlike the case of Riemann-Liouville fractional integrals, by Propositions \ref{pr2} and \ref{pr2b}, for $\alpha,\beta>0$, we have
$$
H_i^\alpha(H_i^\beta f)\not\equiv H_i^{\alpha+\beta}f,\quad i=0,1
$$
and
$$
S_i^\alpha(S_i^\beta f)\not\equiv S_i^{\alpha+\beta}f,\quad i=0,1.
$$
\end{remark}

Next, using Proposition \ref{conjpn}, we shall prove the following composition results between fractional integrals of type (I) and fractional integrals of type (II).

\begin{theorem}\label{COMPHS}
Let $\alpha>0$ and $f\in L^1([0,1];\mathbb{R})$. Then
\begin{equation}\label{r1a}
H_0^\alpha(S_0^\alpha f)(x)=S_0^\alpha(H_0^\alpha f)(x)=\int_0^x f(y)\,dy,\quad \mbox{a.e. } x\in [0,1]
\end{equation}
and
\begin{equation}\label{r2q}
H_1^\alpha(S_1^\alpha f)(x)=S_1^\alpha(H_1^\alpha f)(x)=\int_x^1 f(y)\,dy,\quad \mbox{a.e. } x\in [0,1].
\end{equation}
\end{theorem}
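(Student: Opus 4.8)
The plan is to reduce all four compositions to the composition Theorems \ref{pr2} and \ref{pr2b} together with the conjugacy of $k_\alpha$ and $k'_\alpha$ established in Proposition \ref{conjpn}. The starting point is the identifications \eqref{obs1H} and \eqref{obs2S}, which say that $H_i^\alpha=I_i^{k_\alpha}$ and $S_i^\alpha=I_i^{k'_\alpha}$ for $i=0,1$. Thus each product in the statement is a composition of two $k$-integral operators, and the only inputs I need are the values of the kernels $\delta_{k_\alpha,k'_\alpha}$ and $\delta_{k'_\alpha,k_\alpha}$, which by Proposition \ref{conjpn} both equal the unit kernel $\mathds{1}$. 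In particular, since $\delta_{k_\alpha,k'_\alpha}\equiv 1$ and $\delta_{k'_\alpha,k_\alpha}\equiv 1$ are strictly positive and finite, both relations $k_\alpha\mathcal{R}k'_\alpha$ and $k'_\alpha\mathcal{R}k_\alpha$ hold by the definition of $\mathcal{R}$.

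For the left-sided identities I would treat the two compositions separately. For $H_0^\alpha(S_0^\alpha f)=I_0^{k_\alpha}(I_0^{k'_\alpha}f)$, the relation $k_\alpha\mathcal{R}k'_\alpha$ lets me apply Theorem \ref{pr2}, giving $I_0^{k_\alpha}(I_0^{k'_\alpha}f)=I_0^{\delta_{k_\alpha,k'_\alpha}}f=I_0^{\mathds{1}}f$. Since $\omega\equiv 1$ on $[0,1]$, the unit-kernel integral is $(I_0^{\mathds{1}}f)(x)=\int_0^x f(y)\,dy$. Symmetrically, for $S_0^\alpha(H_0^\alpha f)=I_0^{k'_\alpha}(I_0^{k_\alpha}f)$ I would use $k'_\alpha\mathcal{R}k_\alpha$, so Theorem \ref{pr2} again yields $I_0^{\delta_{k'_\alpha,k_\alpha}}f=I_0^{\mathds{1}}f=\int_0^x f(y)\,dy$. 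This establishes \eqref{r1a}.

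For the right-sided identities \eqref{r2q} I would argue in the same way, now invoking Theorem \ref{pr2b}, whose hypothesis asks for $k_2\mathcal{R}k_1$ and whose conclusion involves $\delta_{k_2,k_1}$. For $H_1^\alpha(S_1^\alpha f)=I_1^{k_\alpha}(I_1^{k'_\alpha}f)$ this requires $k'_\alpha\mathcal{R}k_\alpha$ and produces $I_1^{\delta_{k'_\alpha,k_\alpha}}f$; for $S_1^\alpha(H_1^\alpha f)=I_1^{k'_\alpha}(I_1^{k_\alpha}f)$ it requires $k_\alpha\mathcal{R}k'_\alpha$ and produces $I_1^{\delta_{k_\alpha,k'_\alpha}}f$. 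Both $\delta$-kernels equal $\mathds{1}$ by Proposition \ref{conjpn}, and with $\omega\equiv 1$ the right-sided unit-kernel integral is $(I_1^{\mathds{1}}f)(x)=\int_x^1 f(y)\,dy$, giving \eqref{r2q}.

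Since the heavy lifting is already carried out in Proposition \ref{conjpn} (the conjugacy, proved via the Laplace-transform computation of Lemma \ref{Laplace}) and in the composition theorems, no serious obstacle remains. The one point requiring care is the bookkeeping of the index order in Theorems \ref{pr2} and \ref{pr2b}, namely making sure that for each of the four products the correct relation, $k_\alpha\mathcal{R}k'_\alpha$ or $k'_\alpha\mathcal{R}k_\alpha$, is the one being invoked. Because conjugacy makes both $\delta_{k_\alpha,k'_\alpha}$ and $\delta_{k'_\alpha,k_\alpha}$ equal to $\mathds{1}$, all four cases collapse to the same antiderivative, so the symmetry of the two middle equalities in \eqref{r1a} and \eqref{r2q} follows automatically.
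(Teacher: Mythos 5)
Your proof is correct and follows exactly the route the paper takes: identify $H_i^\alpha=I_i^{k_\alpha}$ and $S_i^\alpha=I_i^{k'_\alpha}$, apply Theorems \ref{pr2} and \ref{pr2b} using the conjugacy from Proposition \ref{conjpn} to reduce each composition to the unit-kernel integral, and use $\omega\equiv 1$ to identify that integral with the antiderivative. The only difference is that you spell out the index bookkeeping that the paper leaves implicit, which is a fair (and careful) expansion of the same argument.
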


\begin{proof}
Using Theorem \ref{pr2}, Proposition \ref{conjpn}, \eqref{obs1H} and \eqref{obs2S},  we obtain \eqref{r1a}. Similarly, using Theorem \ref{pr2b}, Proposition \ref{conjpn}, \eqref{obs1H} and \eqref{obs2S},  we obtain \eqref{r2q}.
\end{proof}

\begin{theorem}[Integration by parts rule for fractional integrals of type (I)]\label{CHT}
Let $\alpha>0$, $f\in L^1([0,1];\mathbb{R})$ and $g\in L^\infty([0,1];\mathbb{R})$.  Then
$$
\int_0^1 (H_0^\alpha f)(x) g(x)\,dx=\int_0^1 (H_1^\alpha g)(x) f(x)\,dx.
$$
\end{theorem}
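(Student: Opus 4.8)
The plan is to recognize this as a direct specialization of the general integration by parts rule already established in Theorem \ref{INTPA}, rather than to compute anything from scratch. Recall that throughout this section the weight is $\omega\equiv 1$ and the interval is $[a,b]=[0,1]$, and that by \eqref{obs1H} the type (I) operators are precisely the $k_\alpha$-integrals, namely $H_0^\alpha=I_0^{k_\alpha}$ and $H_1^\alpha=I_1^{k_\alpha}$. The only structural fact required before invoking Theorem \ref{INTPA} is that $k_\alpha$ be an admissible kernel, i.e.\ $k_\alpha\in\mathcal{K}_\omega$, and this is exactly the content of Proposition \ref{kk'}.

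First I would fix $\alpha>0$ and record, via Proposition \ref{kk'}, that $k_\alpha\in\mathcal{K}_\omega$, so that the left- and right-sided $k_\alpha$-integrals are well-defined continuous operators on $L^1([0,1];\mathbb{R})$. Next I would apply the integration by parts identity \eqref{RL1} of Theorem \ref{INTPA} with the choice $k=k_\alpha$, the given $f\in L^1([0,1];\mathbb{R})$, and $g\in L^\infty([0,1];\mathbb{R})$; since $\omega\equiv 1$, every occurrence of the weight in \eqref{RL1} drops out, leaving
$$
\int_0^1 (I_0^{k_\alpha}f)(x)\, g(x)\,dx=\int_0^1 (I_1^{k_\alpha}g)(x)\, f(x)\,dx.
$$

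Finally I would substitute the identifications $H_0^\alpha=I_0^{k_\alpha}$ and $H_1^\alpha=I_1^{k_\alpha}$ from \eqref{obs1H} into the two sides of this equality to obtain precisely the asserted identity. There is essentially no obstacle here: the hypotheses $f\in L^1$ and $g\in L^\infty$ are exactly those demanded by Theorem \ref{INTPA}, and the Fubini interchange that drives the general rule has already been carried out in its proof. The only point deserving a word of care is the membership $k_\alpha\in\mathcal{K}_\omega$ needed to license the application of the general rule, and that has been dispatched separately in Proposition \ref{kk'}; consequently the present statement is best regarded as an immediate corollary of Theorem \ref{INTPA}.
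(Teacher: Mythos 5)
Your proposal is correct and matches the paper's own argument, which likewise deduces the identity immediately from Theorem \ref{INTPA} together with the identification \eqref{obs1H}; your additional explicit appeal to Proposition \ref{kk'} to confirm $k_\alpha\in\mathcal{K}_\omega$ is a sensible and harmless bit of extra care.
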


\begin{proof}
It follows immediately from Theorem \ref{INTPA} and \eqref{obs1H}.
\end{proof}

\begin{theorem}[Integration by parts rule for fractional integrals of type (II)]
Let $\alpha>0$, $f\in L^1([0,1];\mathbb{R})$ and $g\in L^\infty([0,1];\mathbb{R})$.  Then
$$
\int_0^1 (S_0^\alpha f)(x) g(x)\,dx=\int_0^1 (S_1^\alpha g)(x) f(x)\,dx.
$$
\end{theorem}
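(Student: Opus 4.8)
The plan is to recognize that this is the exact type-(II) analogue of the integration by parts rule for type-(I) integrals (Theorem \ref{CHT}), and to derive it in the same one-step fashion: specialize the general integration by parts identity of Theorem \ref{INTPA} to the kernel $k'_\alpha$ in the present setting where $[a,b]=[0,1]$ and $\omega\equiv 1$.

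First, I would check that the hypotheses of Theorem \ref{INTPA} are met. That theorem applies to any kernel $k\in\mathcal{K}_\omega$; here Proposition \ref{kk'} guarantees $k'_\alpha\in\mathcal{K}_\omega$, so we may take $k=k'_\alpha$. With $\omega\equiv 1$ and $[a,b]=[0,1]$, Theorem \ref{INTPA} yields, for $f\in L^1([0,1];\mathbb{R})$ and $g\in L^\infty([0,1];\mathbb{R})$,
$$
\int_0^1 (I_0^{k'_\alpha}f)(x)\, g(x)\,dx=\int_0^1 (I_1^{k'_\alpha}g)(x)\, f(x)\,dx.
$$

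Next, I would invoke the identification \eqref{obs2S}, namely $S_0^\alpha=I_0^{k'_\alpha}$ and $S_1^\alpha=I_1^{k'_\alpha}$. Substituting these into the displayed equality turns its left-hand side into $\int_0^1 (S_0^\alpha f)(x)\, g(x)\,dx$ and its right-hand side into $\int_0^1 (S_1^\alpha g)(x)\, f(x)\,dx$, which is precisely the asserted identity.

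There is essentially no obstacle here: the result is an immediate corollary, its proof mirroring that of Theorem \ref{CHT} with \eqref{obs2S} in place of \eqref{obs1H}. The only point worth verifying is that the integrability conditions $f\in L^1$ and $g\in L^\infty$ (together with $\omega\equiv 1$) ensure $(I_0^{k'_\alpha}f)\,g\in L^1([0,1];\mathbb{R})$, so that the Fubini argument internal to Theorem \ref{INTPA} is justified; but this is already guaranteed by Proposition \ref{TS1}, which shows $S_0^\alpha f=I_0^{k'_\alpha}f\in L^1([0,1];\mathbb{R})$.
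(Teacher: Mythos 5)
Your proposal is correct and follows exactly the paper's own route: the paper proves this theorem by citing Theorem \ref{INTPA} together with the identification \eqref{obs2S}, which is precisely your one-step specialization (with the extra, welcome, check that $k'_\alpha\in\mathcal{K}_\omega$ via Proposition \ref{kk'}).
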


\begin{proof}
It follows immediately from Theorem \ref{INTPA} and \eqref{obs2S}.
\end{proof}

Further, we shall prove the following approximation result.

\begin{theorem}\label{appT}
Let $f\in L^1([0,1]; \mathbb{R})$.  Then 
$$
\lim_{\alpha\to 0^+}\| S_0^\alpha f-f\|_{L^1([0,1];\mathbb{R})}=0.
$$
\end{theorem}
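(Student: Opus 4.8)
The plan is to recognize $S_0^\alpha$ as a one-sided convolution against an approximate identity and to run the classical three-$\varepsilon$ argument, combining density of continuous functions with a uniform operator bound. Writing $\phi_\alpha(s)=\tfrac1\alpha E_1(s/\alpha)$ for $s>0$, we have $(S_0^\alpha f)(x)=\int_0^x \phi_\alpha(x-y)f(y)\,dy$, and the first preliminary fact I would record is the total mass
$$
\int_0^\infty \phi_\alpha(s)\,ds=\int_0^\infty E_1(t)\,dt=1,
$$
which follows either from Fubini applied to the definition of $E_1$, or from the antiderivative $tE_1(t)-e^{-t}$ of $E_1$ evaluated between $0$ and $+\infty$ (at $+\infty$ it tends to $0$, at $0^+$ it tends to $-1$).

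First I would establish a uniform bound $\|S_0^\alpha f\|_{L^1([0,1];\mathbb{R})}\le \|f\|_{L^1([0,1];\mathbb{R})}$ for all $\alpha>0$. Indeed, by \eqref{obs2S} we have $S_0^\alpha=I_0^{k'_\alpha}$ with $\omega\equiv 1$, so Proposition \ref{pr1} bounds the norm by $\|F_{k'_\alpha}\|_{L^\infty}\|f\|_{L^1}$, and the estimate $|F_{k'_\alpha}(y)|\le 1$ obtained in the proof of Proposition \ref{kk'} gives the claim. This uniform bound legitimizes a density reduction: given $f\in L^1([0,1];\mathbb{R})$ and $\varepsilon>0$, choose $g\in C([0,1];\mathbb{R})$ with $\|f-g\|_{L^1}<\varepsilon$ (continuous functions being dense in $L^1$); then
$$
\|S_0^\alpha f-f\|_{L^1}\le \|S_0^\alpha(f-g)\|_{L^1}+\|S_0^\alpha g-g\|_{L^1}+\|g-f\|_{L^1}\le 2\varepsilon+\|S_0^\alpha g-g\|_{L^1},
$$
so it suffices to treat continuous $g$.

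For continuous (hence uniformly continuous and bounded) $g$, I would split
$$
(S_0^\alpha g)(x)-g(x)=\int_0^x \phi_\alpha(x-y)\bigl[g(y)-g(x)\bigr]\,dy+g(x)\bigl[m_\alpha(x)-1\bigr],
$$
where $m_\alpha(x)=\int_0^x \phi_\alpha(s)\,ds=1+\tfrac{x}{\alpha}E_1(\tfrac{x}{\alpha})-e^{-x/\alpha}$. The boundary term is controlled by the inequality $zE_1(z)\le e^{-z}$ already used in Proposition \ref{kk'}, giving $|m_\alpha(x)-1|\le 2e^{-x/\alpha}$, hence $\int_0^1 |g(x)|\,|m_\alpha(x)-1|\,dx\le 2\|g\|_\infty\,\alpha\to 0$. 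For the first term I would substitute $s=x-y$ and split the $s$-integral at a threshold $\delta$ from the modulus of continuity of $g$: for $s<\delta$ the increment $|g(x-s)-g(x)|$ is at most $\varepsilon$ and the mass is at most $\int_0^\infty \phi_\alpha=1$, while for $s\ge\delta$ the increment is at most $2\|g\|_\infty$ and the tail mass is $\int_{\delta/\alpha}^\infty E_1(t)\,dt\le e^{-\delta/\alpha}\to 0$. Integrating in $x$ over $[0,1]$ then bounds the $L^1$-norm of the first term by $\varepsilon+2\|g\|_\infty e^{-\delta/\alpha}$, whose $\limsup$ as $\alpha\to 0^+$ is at most $\varepsilon$.

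The main obstacle is that this is a one-sided kernel on a finite interval, so the mass seen at a point $x$ is $m_\alpha(x)$ rather than exactly $1$; the logarithmic singularity of $E_1$ at the origin is harmless because $E_1$ stays integrable. The key step is therefore the mass-defect estimate $|m_\alpha(x)-1|\le 2e^{-x/\alpha}$, which simultaneously disposes of the boundary term and certifies that the truncation does not spoil the approximate-identity behaviour. Letting $\varepsilon\to 0$ yields $\|S_0^\alpha g-g\|_{L^1}\to 0$ for continuous $g$, and together with the density reduction this proves $\lim_{\alpha\to 0^+}\|S_0^\alpha f-f\|_{L^1([0,1];\mathbb{R})}=0$.
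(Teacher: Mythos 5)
Your proof is correct, but it takes a different route from the paper's. The paper proves this theorem by extending $f$ by zero to a function $F_f$ on all of $\mathbb{R}$, invoking the classical approximate-identity convergence theorem on $\mathbb{R}$ (stated as Lemma \ref{HAAA2} and quoted from a harmonic-analysis reference without proof) applied to the family $G_\alpha$, and then restricting back to $[0,1]$ via the identity $(F_f*G_\alpha)_{|[0,1]}=S_0^\alpha f$. You instead work directly on the interval and prove the approximate-identity convergence from scratch: uniform boundedness $\|S_0^\alpha f\|_{L^1}\le\|f\|_{L^1}$ (which indeed follows from Proposition \ref{pr1} together with the bound $|F_{k'_\alpha}|\le 1$ from the proof of Proposition \ref{kk'}), density of $C([0,1];\mathbb{R})$ in $L^1$, and for continuous $g$ the split into an oscillation term and a mass-defect term $g(x)\bigl[m_\alpha(x)-1\bigr]$. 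All your estimates check out: $\int_0^\infty E_1(t)\,dt=1$ via the antiderivative $tE_1(t)-e^{-t}$, the bound $|m_\alpha(x)-1|\le e^{-x/\alpha}\le 2e^{-x/\alpha}$ from $zE_1(z)\le e^{-z}$, and the tail bound $\int_{\delta/\alpha}^\infty E_1(t)\,dt\le e^{-\delta/\alpha}$. What the paper's approach buys is brevity, since the hard analysis is outsourced to a cited lemma; what yours buys is self-containedness and an explicit treatment of the one-sided truncation (the mass defect near $x=0$), which the zero-extension argument hides inside the general lemma. Both are valid proofs of the statement.
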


Before giving the proof of Theorem \ref{appT}, we need some preliminaries on Harmonic Analysis (see, for example \cite{CH}).

\begin{definition}[Approximate identity]
An approximate identity is a family
$\{h_\alpha\}_{\alpha>0}$ of real valued functions such that
\begin{itemize}
\item[\rm{(i)}] $h_\alpha\geq 0$, for all $\alpha>0$.
\item[\rm{(ii)}]$\int_{\mathbb{R}}h_\alpha(t)\,dt=1$, for all $\alpha>0$.
\item[\rm{(iii)}] For every $\delta>0$, 
$$
\lim_{\alpha\to 0^+} \int_{|x|>\delta} h_\alpha(t)\,dt =0.
$$
\end{itemize}
\end{definition}

\begin{lemma}\label{HAAA}
Let $h$ be a real valued function such that 
\begin{itemize}
\item[\rm{(i)}] $h\geq 0$.
\item[\rm{(ii)}]$\int_{\mathbb{R}}h(t)\,dt=1$.
\end{itemize}
Then the family $\{h_\alpha\}_{\alpha>0}$ given by
$$
h_\alpha(t)=\frac{1}{\alpha}h\left(\frac{t}{\alpha}\right),\quad t\in \mathbb{R},\,\alpha>0,
$$
is an approximate identity. 
\end{lemma}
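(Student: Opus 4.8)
The plan is to verify directly the three defining conditions of an approximate identity for the rescaled family $\{h_\alpha\}_{\alpha>0}$. I would first record that the hypotheses $h\geq 0$ and $\int_{\mathbb{R}}h=1$ force $h\in L^1(\mathbb{R})$, a fact needed only at the very end. Conditions (i) and (ii) of the definition are then purely formal: since $h\geq 0$ and $\frac{1}{\alpha}>0$ for every $\alpha>0$, we get $h_\alpha(t)=\frac{1}{\alpha}h(t/\alpha)\geq 0$ at once, which is (i).

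For (ii) I would perform the substitution $s=t/\alpha$ (so $dt=\alpha\,ds$), which gives
$$
\int_{\mathbb{R}}h_\alpha(t)\,dt=\int_{\mathbb{R}}\frac{1}{\alpha}h\left(\frac{t}{\alpha}\right)dt=\int_{\mathbb{R}}h(s)\,ds=1,
$$
for every $\alpha>0$, the last equality being hypothesis (ii) on $h$.

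The only step carrying any real content is condition (iii). Fixing $\delta>0$ and applying the same substitution $s=t/\alpha$, which turns the constraint $|t|>\delta$ into $|s|>\delta/\alpha$, I would obtain
$$
\int_{|t|>\delta}h_\alpha(t)\,dt=\int_{|s|>\delta/\alpha}h(s)\,ds.
$$
As $\alpha\to 0^+$ the threshold $\delta/\alpha\to+\infty$, so it suffices to prove that the tail $\int_{|s|>R}h(s)\,ds$ tends to $0$ as $R\to+\infty$. This is precisely where the integrability of $h$ enters: the functions $h\cdot\mathbf{1}_{\{|s|>R\}}$ are dominated by $h\in L^1(\mathbb{R})$ and converge pointwise to $0$ as $R\to+\infty$, so the dominated convergence theorem yields the vanishing of the tail, and hence of the left-hand side as $\alpha\to 0^+$. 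I expect this tail estimate to be the only nonformal ingredient; the remaining verifications reduce to a one-line change of variables.
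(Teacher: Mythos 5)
Your proof is correct: the rescaling argument via the substitution $s=t/\alpha$ handles (i) and (ii) immediately, and the tail estimate $\int_{|s|>\delta/\alpha}h(s)\,ds\to 0$ (which follows from $h\in L^1(\mathbb{R})$, itself a consequence of $h\geq 0$ and $\int_{\mathbb{R}}h=1$) is exactly the one nontrivial step needed for (iii). The paper does not actually prove this lemma --- it is stated without proof as a standard fact from harmonic analysis (with a reference to Champerey's handbook) --- so there is nothing to compare against; your argument is the standard one and fills that gap correctly.
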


\begin{lemma}\label{HAAA2}
Let $\{h_\alpha\}_{\alpha>0}$ be an approximate identity. If $g\in L^1(\mathbb{R};\mathbb{R})$, then
$$
g*h_\alpha\in L^1(\mathbb{R};\mathbb{R}),\quad \alpha>0.
$$
Moreover, we have
$$
\lim_{\alpha\to 0^+} \|g*h_\alpha-g\|_{L^1(\mathbb{R};\mathbb{R})}=0.
$$
\end{lemma}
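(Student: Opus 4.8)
The plan is to reduce the convergence statement to the classical continuity of translation in $L^1(\mathbb{R};\mathbb{R})$, after first disposing of the integrability claim by Young's inequality. Since property (i) gives $h_\alpha\geq 0$ and property (ii) gives $\int_{\mathbb{R}}h_\alpha(t)\,dt=1$, we have $h_\alpha\in L^1(\mathbb{R};\mathbb{R})$ with $\|h_\alpha\|_{L^1(\mathbb{R};\mathbb{R})}=1$. Young's convolution inequality then immediately yields $g*h_\alpha\in L^1(\mathbb{R};\mathbb{R})$ together with the bound $\|g*h_\alpha\|_{L^1(\mathbb{R};\mathbb{R})}\leq \|g\|_{L^1(\mathbb{R};\mathbb{R})}$, settling the first assertion.

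For the limit, the key reduction is to exploit property (ii) to write $g(x)=g(x)\int_{\mathbb{R}}h_\alpha(t)\,dt$, so that
$$
(g*h_\alpha)(x)-g(x)=\int_{\mathbb{R}}\left[g(x-t)-g(x)\right]h_\alpha(t)\,dt,\quad \mbox{a.e. } x\in \mathbb{R}.
$$
I would then take the $L^1$ norm in $x$, move the absolute value inside (triangle inequality) and swap the order of integration by Tonelli's theorem, which is legitimate because the integrand is non-negative thanks to $h_\alpha\geq 0$. This produces
$$
\|g*h_\alpha-g\|_{L^1(\mathbb{R};\mathbb{R})}\leq \int_{\mathbb{R}}\tau_g(t)\,h_\alpha(t)\,dt,\qquad \tau_g(t):=\|g(\cdot-t)-g\|_{L^1(\mathbb{R};\mathbb{R})}.
$$
Next I would invoke continuity of translation, namely $\tau_g(t)\to 0$ as $t\to 0$, while trivially $\tau_g(t)\leq 2\|g\|_{L^1(\mathbb{R};\mathbb{R})}$. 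Given $\varepsilon>0$, choose $\delta>0$ with $\tau_g(t)<\varepsilon$ for $|t|\leq\delta$ and split the integral at $|t|=\delta$:
$$
\int_{\mathbb{R}}\tau_g(t)\,h_\alpha(t)\,dt\leq \varepsilon\int_{|t|\leq\delta}h_\alpha(t)\,dt+2\|g\|_{L^1(\mathbb{R};\mathbb{R})}\int_{|t|>\delta}h_\alpha(t)\,dt\leq \varepsilon+2\|g\|_{L^1(\mathbb{R};\mathbb{R})}\int_{|t|>\delta}h_\alpha(t)\,dt.
$$
By property (iii) the last integral tends to $0$ as $\alpha\to 0^+$, so $\limsup_{\alpha\to 0^+}\|g*h_\alpha-g\|_{L^1(\mathbb{R};\mathbb{R})}\leq\varepsilon$; letting $\varepsilon\to 0$ gives the claim.

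The hard part will be the continuity of translation, which is the single nontrivial input; it is the standard fact proved by approximating $g$ in $L^1$ by compactly supported continuous functions (for which uniform continuity makes the estimate immediate) and transferring the bound back to $g$ using $\|h_\alpha\|_{L^1}=1$ and the translation invariance of Lebesgue measure. Since this lemma is collected here as a harmonic-analysis preliminary, I would cite it from \cite{CH} rather than reprove it, keeping the argument above focused on the splitting estimate driven by properties (ii) and (iii).
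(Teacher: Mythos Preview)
Your argument is correct and is in fact the standard proof of this result: Young's inequality for the first assertion, then the splitting estimate driven by continuity of translation in $L^1$ together with properties (ii) and (iii). There is no gap.

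However, the paper does not prove this lemma at all. It is stated as a harmonic-analysis preliminary, introduced by the sentence ``we need some preliminaries on Harmonic Analysis (see, for example \cite{CH})'', and is taken for granted from the reference. So there is nothing to compare: you have supplied a full proof where the paper simply cites one. Your closing remark that you would cite \cite{CH} for the continuity of translation is thus entirely in the spirit of the paper, which cites \cite{CH} for the whole lemma.
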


Let $\widetilde{E_1}: \mathbb{R}\to \mathbb{R}$ be the function given by
\begin{eqnarray*}
\widetilde{E_1}(x)=\left\{ \begin{array}{lll}
E_1(x),  &&\mbox{if } x>0,\\
0, &&\mbox{ otherwise.}
\end{array}
\right.
\end{eqnarray*}
Observe that  the function $\widetilde{E_1}$ satisfies the assumptions of Lemma \ref{HAAA}. Therefore, the family $\{G_\alpha\}_{\alpha>0}$ given by 
\begin{eqnarray*}
G_\alpha(x)=\left\{ \begin{array}{lll}
\frac{1}{\alpha}E_1\left(\frac{x}{\alpha}\right),  &&\mbox{ if } x>0,\\
0, &&\mbox{ otherwise,}
\end{array}
\right.
\end{eqnarray*}
is an approximate identity. Hence, by Lemma \ref{HAAA2}, we have the following result.
\begin{lemma}\label{HAAA3}
For every $g\in L^1(\mathbb{R};\mathbb{R})$, we have
$$
\lim_{\alpha\to 0^+} \|g*G_\alpha-g\|_{L^1(\mathbb{R};\mathbb{R})}=0.
$$
\end{lemma}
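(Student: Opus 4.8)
The plan is to recognize this lemma as an immediate consequence of the two harmonic-analysis lemmas already recorded, with the only real work being the verification of the hypotheses of Lemma \ref{HAAA} for the zero-extension $\widetilde{E_1}$. Since, by construction, $G_\alpha(x)=\frac{1}{\alpha}\widetilde{E_1}(x/\alpha)$ for all $x\in\mathbb{R}$, identifying $h=\widetilde{E_1}$ places us exactly in the setting of Lemma \ref{HAAA}; once $\widetilde{E_1}$ is shown to satisfy (i) and (ii) there, that lemma certifies $\{G_\alpha\}_{\alpha>0}$ as an approximate identity, and Lemma \ref{HAAA2} applied to this family is precisely the desired conclusion.

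First I would dispatch hypothesis (i): for $x>0$ the integrand $e^{-t}/t$ is strictly positive on $(x,\infty)$, so $E_1(x)>0$, and $\widetilde{E_1}$ vanishes for $x\leq 0$; hence $\widetilde{E_1}\geq 0$. The one genuine computation is hypothesis (ii), namely $\int_{\mathbb{R}}\widetilde{E_1}(t)\,dt=1$, which reduces to $\int_0^\infty E_1(x)\,dx=1$. I would obtain this by Tonelli's theorem, writing $E_1(x)=\int_x^\infty \frac{e^{-t}}{t}\,dt$ and interchanging the order of integration over the region $\{0<x<t\}$:
$$
\int_0^\infty E_1(x)\,dx = \int_0^\infty \frac{e^{-t}}{t}\left(\int_0^t dx\right)\,dt = \int_0^\infty e^{-t}\,dt = 1.
$$
Tonelli applies with no integrability needed in advance because the integrand is nonnegative; this is the step deserving the most care, though it remains routine.

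With (i) and (ii) established, Lemma \ref{HAAA} guarantees that $\{G_\alpha\}_{\alpha>0}$ is an approximate identity, and Lemma \ref{HAAA2} then gives, for every $g\in L^1(\mathbb{R};\mathbb{R})$, that $g*G_\alpha\in L^1(\mathbb{R};\mathbb{R})$ and $\lim_{\alpha\to 0^+}\|g*G_\alpha-g\|_{L^1(\mathbb{R};\mathbb{R})}=0$, which is exactly the assertion of the lemma. I do not anticipate any serious obstacle: the entire argument is a hypothesis check feeding into two already-proved results, and the only nontrivial ingredient is the normalization integral handled above.
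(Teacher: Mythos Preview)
Your proposal is correct and follows exactly the same route as the paper: the paper simply notes that $\widetilde{E_1}$ satisfies the hypotheses of Lemma~\ref{HAAA}, deduces that $\{G_\alpha\}_{\alpha>0}$ is an approximate identity, and then invokes Lemma~\ref{HAAA2}. Your version is in fact more explicit, since you actually carry out the normalization computation $\int_0^\infty E_1(x)\,dx=1$ via Tonelli, whereas the paper merely asserts that the hypotheses are met.
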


Now, we are ready to prove  Theorem \ref{appT}.

\begin{proof}
Let $f\in L^1([0,1];\mathbb{R})$. Then $F_f\in L^1(\mathbb{R};\mathbb{R})$, where $F_f$ is the function given by
\begin{eqnarray*}
F_f(x)=\left\{ \begin{array}{lll}
f(x),  &&\mbox{a.e. } x\in [0,1],\\
0, &&\mbox{ otherwise.}
\end{array}
\right.
\end{eqnarray*}
By Lemma \ref{HAAA3}, we have
$$
\lim_{\alpha\to 0^+} \|F_f*G_\alpha-F_f\|_{L^1(\mathbb{R};\mathbb{R})}=0,
$$
which yields
\begin{equation}\label{aya}
\lim_{\alpha\to 0^+} \|(F_f*G_\alpha)_{|[0,1]}-f\|_{L^1([0,1];\mathbb{R})}=0.
\end{equation}
On the other hand, observe that 
\begin{equation}\label{aya2}
(F_f*G_\alpha)_{|[0,1]}=S_0^\alpha f \,\mbox{ in }\, L^1([0,1];\mathbb{R}).
\end{equation}
Combining \eqref{aya} with \eqref{aya2}, the desired result follows.
\end{proof}

Using a similar argument as above, we obtain the following approximation result for $S_1^\alpha$.

\begin{theorem}\label{appT2}
Let $f\in L^1([0,1]; \mathbb{R})$.  Then 
$$
\lim_{\alpha\to 0^+}\| S_1^\alpha f-f\|_{L^1([0,1];\mathbb{R})}=0.
$$
\end{theorem}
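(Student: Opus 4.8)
The plan is to follow the proof of Theorem \ref{appT} almost verbatim, the single change being that the right-sided operator $S_1^\alpha$ is a convolution against the \emph{reflected} exponential-integral kernel rather than $\widetilde{E_1}$ itself. First I would extend $f$ to all of $\mathbb{R}$ by setting $F_f=f$ on $[0,1]$ and $F_f=0$ elsewhere, so that $F_f\in L^1(\mathbb{R};\mathbb{R})$, exactly as in the left-sided argument.

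Next I would introduce the reflected kernel $\widehat{E_1}:\mathbb{R}\to\mathbb{R}$ defined by $\widehat{E_1}(x)=E_1(-x)$ for $x<0$ and $\widehat{E_1}(x)=0$ otherwise. Reflection about the origin preserves both non-negativity and total mass, so $\widehat{E_1}$ satisfies hypotheses (i)--(ii) of Lemma \ref{HAAA}; consequently the family $\{\widehat{G}_\alpha\}_{\alpha>0}$ given by $\widehat{G}_\alpha(x)=\frac{1}{\alpha}\widehat{E_1}(x/\alpha)$, that is $\widehat{G}_\alpha(x)=\frac{1}{\alpha}E_1(-x/\alpha)$ for $x<0$ and $0$ otherwise, is an approximate identity. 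Lemma \ref{HAAA2} then gives
$$
\lim_{\alpha\to 0^+}\|F_f*\widehat{G}_\alpha-F_f\|_{L^1(\mathbb{R};\mathbb{R})}=0,
$$
and restricting to $[0,1]$ yields $\lim_{\alpha\to 0^+}\|(F_f*\widehat{G}_\alpha)_{|[0,1]}-f\|_{L^1([0,1];\mathbb{R})}=0$, the analogue of \eqref{aya}.

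The step that requires genuine care is the analogue of \eqref{aya2}, namely the identification $(F_f*\widehat{G}_\alpha)_{|[0,1]}=S_1^\alpha f$ in $L^1([0,1];\mathbb{R})$. For $x\in[0,1]$ I would write $(F_f*\widehat{G}_\alpha)(x)=\int_{\mathbb{R}}F_f(y)\widehat{G}_\alpha(x-y)\,dy$ and note that the support condition $x-y<0$ forces $y>x$; since $\widehat{G}_\alpha(x-y)=\frac{1}{\alpha}E_1\!\left(\frac{y-x}{\alpha}\right)$ there and $F_f=f$ on $[0,1]$, the integral collapses to $\frac{1}{\alpha}\int_x^1 E_1\!\left(\frac{y-x}{\alpha}\right)f(y)\,dy=(S_1^\alpha f)(x)$. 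The essential bookkeeping is that the reflection turns the convolution tail into the interval $[x,1]$ with argument $\tfrac{y-x}{\alpha}$, exactly matching the definition of $S_1^\alpha$; everything else is formally identical to the left-sided case. Combining this identity with the limit above completes the proof.
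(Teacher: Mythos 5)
Your proposal is correct and is precisely the ``similar argument'' the paper alludes to for Theorem \ref{appT2}: you repeat the proof of Theorem \ref{appT} with the reflected kernel $\widehat{E_1}(x)=E_1(-x)\mathds{1}_{\{x<0\}}$, verify it still satisfies the hypotheses of Lemma \ref{HAAA}, and check that the resulting convolution restricted to $[0,1]$ coincides with $S_1^\alpha f$. The sign and support bookkeeping in the identification $(F_f*\widehat{G}_\alpha)_{|[0,1]}=S_1^\alpha f$ is carried out correctly, so no gap remains.
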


Further, we introduce the following fractional derivative operators.

\begin{definition}\label{asd}
Let $\theta>1$. Let $f\in L^1([0,1];\mathbb{R})$ be  such that $S_0^{\theta-1} f$  has  an absolutely continuous representative.  The left-sided fractional derivative of order $\theta$ of $f$ is given by
$$
\left(\mathcal{D}_0^\theta f\right)(x)=\frac{d}{dx} \left(S_0^{\theta-1} f\right)(x),\quad \mbox{ a.e. } x\in [0,1].
$$ 
Let $f\in L^1([0,1];\mathbb{R})$ be such that $S_1^{\theta-1} f$  has  an absolutely continuous representative. The right-sided fractional derivative of order $\theta$ of $f$ is given by
$$
\left(\mathcal{D}_1^\theta f\right)(x)=-\frac{d}{dx} \left(S_1^{\theta-1} f\right)(x),\quad \mbox{ a.e. } x\in [0,1].
$$ 
\end{definition}

\begin{remark}
Note that by Proposition \ref{conjpn}, for all $\theta>1$, we have
$$
k_{\theta-1} \in \mbox{conj}\left(k'_{\theta-1}\right).
$$
Moreover, under the assumptions of Definition \ref{asd}, we have
\begin{equation}\label{derv1}
\left(\mathcal{D}_0^\theta f\right)(x)=\left(D_0^{k'_{\theta-1}}f\right)(x), \quad \mbox{ a.e. } x\in [0,1]
\end{equation}
and
\begin{equation}\label{derv2}
\left(\mathcal{D}_1^\theta f\right)(x)=\left(D_1^{k'_{\theta-1}}f\right)(x), \quad \mbox{ a.e. } x\in [0,1].
\end{equation}
\end{remark}

Using Theorem \ref{TN1}, we obtain the following result.

\begin{theorem}\label{NFDI}
Let $\theta>1$. For every $f\in L^1([0,1];\mathbb{R})$, we have
$$
\mathcal{D}_0^\theta (H_0^{\theta-1} f)(x)= f(x),\quad \mbox{ a.e. }x\in [0,1]
$$
and
$$
\mathcal{D}_1^{\theta}(H_1^{\theta-1} f)(x)= 
f(x),\quad \mbox{ a.e. }x\in [0,1].
$$
\end{theorem}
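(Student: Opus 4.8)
The plan is to recognize that both claimed identities are instances of the abstract inversion rule already established in Theorem~\ref{TN1}, once the newly defined operators are rewritten in terms of the general $k$-integral and $k'$-derivative operators. The only real work is to check that the hypotheses of Theorem~\ref{TN1} are met and that the left-hand sides are well defined.

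First I would fix $\theta>1$ and set $\alpha=\theta-1>0$. By the identification \eqref{obs1H}, we have $H_0^{\theta-1}f=I_0^{k_{\theta-1}}f$, and by \eqref{derv1} the left-sided derivative satisfies $\mathcal{D}_0^\theta g=D_0^{k'_{\theta-1}}g$ for every admissible $g$. Hence the composite operator in the statement becomes
$$
\mathcal{D}_0^\theta\bigl(H_0^{\theta-1}f\bigr)=D_0^{k'_{\theta-1}}\bigl(I_0^{k_{\theta-1}}f\bigr),
$$
reducing the problem to the general framework of Section~\ref{sec4} with $k=k_{\theta-1}$ and $k'=k'_{\theta-1}$.

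Next I would verify the conjugacy hypothesis. Since $\alpha=\theta-1>0$, Proposition~\ref{conjpn} guarantees that $k_{\theta-1}$ and $k'_{\theta-1}$ are conjugate kernels, i.e. $k_{\theta-1}\in\mbox{conj}(k'_{\theta-1})$. This is precisely the standing assumption $k\in\mbox{conj}(k')$ under which Theorem~\ref{TN1} is proved. Consequently, applying identity \eqref{inv1} of Theorem~\ref{TN1} with this choice of $(k,k')$ yields $D_0^{k'_{\theta-1}}\bigl(I_0^{k_{\theta-1}}f\bigr)(x)=f(x)$ for a.e. $x\in[0,1]$, which, after reinserting the identifications, is the first asserted equality. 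The right-sided identity follows identically, using \eqref{derv2} and \eqref{obs1H} to write $\mathcal{D}_1^\theta(H_1^{\theta-1}f)=D_1^{k'_{\theta-1}}(I_1^{k_{\theta-1}}f)$ and then invoking \eqref{inv1b} of Theorem~\ref{TN1}.

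The step most deserving of care---rather than a genuine obstacle---is the well-definedness of the left-hand side: the derivative $\mathcal{D}_0^\theta$ is only defined on functions whose type-(II) integral admits an absolutely continuous representative. This is not an extra hypothesis to impose but a fact supplied by the proof of Theorem~\ref{TN1}, where it is shown that $I_0^{k'_{\theta-1}}(I_0^{k_{\theta-1}}f)(x)=\int_0^x f(y)\,dy$ (recall $\omega\equiv 1$ here); the right-hand side is manifestly absolutely continuous, so $S_0^{\theta-1}(H_0^{\theta-1}f)$ has an absolutely continuous representative and the derivative may legitimately be taken. Thus no separate regularity argument is required, and the theorem is immediate from the inversion results of Section~\ref{sec4}.
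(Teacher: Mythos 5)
Your proposal is correct and takes essentially the same route as the paper, which simply derives the theorem by applying Theorem~\ref{TN1} with $k=k_{\theta-1}$, $k'=k'_{\theta-1}$ via the identifications \eqref{obs1H}, \eqref{derv1}--\eqref{derv2} and the conjugacy from Proposition~\ref{conjpn}. Your added remark on why $S_0^{\theta-1}(H_0^{\theta-1}f)$ automatically has an absolutely continuous representative is a detail the paper leaves implicit, and it is handled correctly.
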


The following result follows from Theorem \ref{TN3}.

\begin{theorem}\label{KATR}
Let $\theta>1$ and $f\in L^1([0,1];\mathbb{R})$ be such that $S_0^{\theta-1} f$  has an absolutely continuous representative. Then
$$
H_0^{\theta-1}\left(\mathcal{D}_0^\theta f\right)(x)=f(x)-\left(S_0^{\theta-1} f\right)(0) F\left(\frac{x}{\theta-1}\right),\quad \mbox{ a.e. } x\in [0,1],
$$
where $F$ is given by \eqref{FF}.
\end{theorem}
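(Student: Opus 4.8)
The plan is to derive this identity directly from Theorem \ref{TN3} after translating the fractional operators of Section \ref{sec5} into the abstract $k$-integral and $k'$-derivative language. Setting $\alpha=\theta-1$ and recalling that here $[a,b]=[0,1]$ with $\omega\equiv 1$, the identifications \eqref{obs1H}, \eqref{obs2S} and \eqref{derv1} give $H_0^{\theta-1}=I_0^{k_{\theta-1}}$, $S_0^{\theta-1}=I_0^{k'_{\theta-1}}$ and $\mathcal{D}_0^\theta=D_0^{k'_{\theta-1}}$. By Proposition \ref{conjpn} the kernels $k_{\theta-1}$ and $k'_{\theta-1}$ are conjugate, so $k_{\theta-1}\in\mbox{conj}(k'_{\theta-1})$ and the standing hypotheses of Theorem \ref{TN3} hold with $k=k_{\theta-1}$ and $k'=k'_{\theta-1}$.

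First I would apply Theorem \ref{TN3} verbatim. Since $\omega\equiv 1$ and $(I_0^{k'_{\theta-1}}f)(0)=(S_0^{\theta-1}f)(0)$, it outputs
$$
H_0^{\theta-1}\left(\mathcal{D}_0^\theta f\right)(x)=f(x)-\left(S_0^{\theta-1}f\right)(0)\,\frac{d}{dx}\delta_{k_{\theta-1},\mathds{1}}(x,0),\quad \mbox{a.e. }x\in[0,1].
$$
Comparing this with the claimed formula, the proof reduces to the single identity
$$
\frac{d}{dx}\delta_{k_{\theta-1},\mathds{1}}(x,0)=F\left(\frac{x}{\theta-1}\right).
$$

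To establish it, I would first observe, by comparing the definition of $k_\alpha$ with \eqref{FF}, that $k_\alpha(x,y)=F\left(\frac{x-y}{\alpha}\right)$. Unwinding the definition of $\delta_{k_{\theta-1},\mathds{1}}$ (with $\mathds{1}\equiv 1$ and $\omega\equiv 1$) then gives
$$
\delta_{k_{\theta-1},\mathds{1}}(x,0)=\int_0^x k_{\theta-1}(x,z)\,dz=\int_0^x F\left(\frac{x-z}{\theta-1}\right)\,dz=\int_0^x F\left(\frac{u}{\theta-1}\right)\,du,
$$
where the last step is the substitution $u=x-z$. Differentiating via the fundamental theorem of calculus yields $F\left(\frac{x}{\theta-1}\right)$, completing the argument. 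The only point requiring any care is the recognition that $k_\alpha(x,y)=F\left(\frac{x-y}{\alpha}\right)$, which converts the abstract boundary term $\delta_{k_{\theta-1},\mathds{1}}$ into an explicit integral of $F$; beyond that the computation is routine and no genuine obstacle arises.
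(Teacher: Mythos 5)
Your proposal is correct and follows exactly the route the paper intends: the paper simply states that the result ``follows from Theorem \ref{TN3}'', and you supply the only nontrivial detail, namely that with $\omega\equiv 1$ and $k_\alpha(x,y)=F\bigl(\tfrac{x-y}{\alpha}\bigr)$ one has $\frac{d}{dx}\delta_{k_{\theta-1},\mathds{1}}(x,0)=F\bigl(\tfrac{x}{\theta-1}\bigr)$. The verification is accurate, so nothing further is needed.
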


Similarly, using Theorem \ref{TN3b}, we obtain the following result.

\begin{theorem}\label{KATR}
Let $\theta>1$ and $f\in L^1([0,1];\mathbb{R})$ be such that $S_1^{\theta-1} f$  has an absolutely continuous representative. Then
$$
H_1^{\theta-1}\left(\mathcal{D}_1^\theta f\right)(x)=f(x)-\left(S_1^{\theta-1} f\right)(1) F\left(\frac{1-x}{\theta-1}\right),\quad \mbox{ a.e. } x\in [0,1].
$$
\end{theorem}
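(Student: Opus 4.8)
The plan is to deduce this statement directly from Theorem~\ref{TN3b}, which is the abstract right-sided inversion formula, by translating it into the notation of Section~\ref{sec5}. First I would specialize Theorem~\ref{TN3b} to the present setting $[a,b]=[0,1]$, $\omega\equiv 1$, with $k=k_{\theta-1}$ and $k'=k'_{\theta-1}$. Proposition~\ref{conjpn} guarantees that $k_{\theta-1}$ and $k'_{\theta-1}$ are conjugate, hence $k_{\theta-1}\in\mbox{conj}(k'_{\theta-1})$, so the standing hypotheses under which Theorem~\ref{TN3b} was proved are satisfied. Using the identifications $H_1^{\theta-1}=I_1^{k_{\theta-1}}$ from \eqref{obs1H}, $S_1^{\theta-1}=I_1^{k'_{\theta-1}}$ from \eqref{obs2S}, and $\mathcal{D}_1^{\theta}=D_1^{k'_{\theta-1}}$ from \eqref{derv2}, together with $\omega\equiv 1$, Theorem~\ref{TN3b} yields
$$
H_1^{\theta-1}\left(\mathcal{D}_1^\theta f\right)(x)=f(x)+\left(S_1^{\theta-1}f\right)(1)\,\frac{d}{dx}\,\delta_{\mathds{1},k_{\theta-1}}(1,x),\quad \mbox{ a.e. } x\in[0,1].
$$
It then remains only to identify the boundary term $\frac{d}{dx}\,\delta_{\mathds{1},k_{\theta-1}}(1,x)$ with $-F\!\left(\frac{1-x}{\theta-1}\right)$.

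The single genuine computation is the evaluation of this term. By the definition of $\delta_{k_1,k_2}$ and $\omega\equiv 1$, I would write
$$
\delta_{\mathds{1},k_{\theta-1}}(1,x)=\int_x^1 \mathds{1}(1,z)\,k_{\theta-1}(z,x)\,dz=\int_x^1 k_{\theta-1}(z,x)\,dz.
$$
Recalling that $k_{\theta-1}(z,x)=F\!\left(\frac{z-x}{\theta-1}\right)$ with $F$ given by \eqref{FF}, the change of variable $u=z-x$ turns this into $\int_0^{1-x}F\!\left(\frac{u}{\theta-1}\right)\,du$. Differentiating in $x$ via the fundamental theorem of calculus, and accounting for the fact that the upper limit $1-x$ is decreasing in $x$, gives
$$
\frac{d}{dx}\,\delta_{\mathds{1},k_{\theta-1}}(1,x)=-F\!\left(\frac{1-x}{\theta-1}\right),\quad \mbox{ a.e. } x\in[0,1].
$$

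Substituting this into the displayed consequence of Theorem~\ref{TN3b} immediately produces the claimed identity, the minus sign in front of $\left(S_1^{\theta-1}f\right)(1)F\!\left(\frac{1-x}{\theta-1}\right)$ arising from the orientation of the right-sided integral. The argument is essentially mechanical once Theorem~\ref{TN3b} is invoked; the only point demanding care, and thus the main (minor) obstacle, is the correct bookkeeping of the sign and the reversal of the limits of integration in the boundary term, which is exactly where this right-sided statement differs from the left-sided version obtained from Theorem~\ref{TN3}.
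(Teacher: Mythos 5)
Your proposal is correct and follows essentially the same route as the paper, whose entire proof is the remark ``Similarly, using Theorem \ref{TN3b}, we obtain the following result,'' i.e.\ precisely the specialization $k=k_{\theta-1}$, $k'=k'_{\theta-1}$, $\omega\equiv 1$ that you carry out. Your explicit evaluation $\frac{d}{dx}\,\delta_{\mathds{1},k_{\theta-1}}(1,x)=-F\bigl(\tfrac{1-x}{\theta-1}\bigr)$, including the sign from the decreasing upper limit, is the only computation needed and is done correctly.
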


The next results characterize the conditions for the existence of the fractional derivatives $\mathcal{D}_0^\theta$ and $\mathcal{D}_1^\theta$ in the space $AC([0,1];\mathbb{R})$.

\begin{theorem}\label{TYYRG}
Let $\theta>1$. If $f\in AC([0,1];\mathbb{R})$, then $\mathcal{D}_0^\theta f$ exists almost everywhere in $[0,1]$, and can be represented in the form
\begin{equation}\label{sim}
\left(\mathcal{D}_0^\theta f\right)(x)=f(0) \left[\frac{1}{\theta-1} E_1\left(\frac{x}{\theta-1}\right)\right]+ \int_0^x \frac{1}{\theta-1}E_1\left(\frac{x-y}{\theta-1}\right)\frac{df}{dy}(y)\,dy
,\quad \mbox{ a.e. } x\in [0,1].
\end{equation}
\end{theorem}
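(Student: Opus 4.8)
The plan is to reduce everything to the composition and boundary results already proved, exploiting that $f\in AC([0,1];\mathbb R)$ can be written as a constant plus a primitive. Throughout, set $\alpha:=\theta-1>0$ and recall that since $\omega\equiv 1$ we have, by \eqref{obs2S} and \eqref{derv1}, $\mathcal D_0^\theta f=\frac{d}{dx}\big(S_0^{\alpha}f\big)=D_0^{k'_{\alpha}}f$ with $S_0^{\alpha}=I_0^{k'_{\alpha}}$. The assumption $f\in AC([0,1];\mathbb R)$ gives $f(x)=f(0)+\int_0^x f'(y)\,dy=f(0)+\big(I_0^{\mathds 1}f'\big)(x)$ with $f'\in L^1([0,1];\mathbb R)$, where $\mathds 1$ is the unit kernel. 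The strategy is then to apply the linear operator $I_0^{k'_{\alpha}}$ to this decomposition and to rewrite each of the two resulting terms as the primitive of an $L^1$ function, so that absolute continuity of $S_0^{\alpha}f$ becomes transparent and its derivative can be read off directly.

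For the constant term, Proposition \ref{ex1} gives $I_0^{k'_{\alpha}}\big(f(0)\big)(x)=f(0)\,\delta_{k'_{\alpha},\mathds 1}(x,0)$. A direct substitution $u=\frac{x-z}{\alpha}$ in the defining integral yields $\delta_{k'_{\alpha},\mathds 1}(x,0)=\int_0^{x/\alpha}E_1(u)\,du$, which is absolutely continuous on $[0,1]$ with a.e. derivative $\frac1\alpha E_1\!\big(\frac{x}{\alpha}\big)$; this will produce the first term of \eqref{sim}. For the second term I would compose: applying Theorem \ref{pr2} (its hypothesis $k'_{\alpha}\,\mathcal R\,\mathds 1$ follows from $0<\delta_{k'_{\alpha},\mathds 1}<\infty$ on $\Omega$, which in turn follows from $E_1>0$ and its integrability, including the mild logarithmic singularity at $0$) gives $I_0^{k'_{\alpha}}\big(I_0^{\mathds 1}f'\big)=I_0^{\delta_{k'_{\alpha},\mathds 1}}f'$. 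The key observation is the symmetry $\delta_{k'_{\alpha},\mathds 1}=\delta_{\mathds 1,k'_{\alpha}}$ (both equal $\int_0^{(x-y)/\alpha}E_1(u)\,du$ after the appropriate substitution), so a second application of Theorem \ref{pr2} rewrites this as $I_0^{\mathds 1}\big(I_0^{k'_{\alpha}}f'\big)=I_0^{\mathds 1}\big(S_0^{\alpha}f'\big)$, i.e. $\int_0^x \big(S_0^{\alpha}f'\big)(s)\,ds$.

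Collecting the two pieces, $S_0^{\alpha}f(x)=f(0)\int_0^{x/\alpha}E_1(u)\,du+\int_0^x \big(S_0^{\alpha}f'\big)(s)\,ds$; since $S_0^{\alpha}f'\in L^1([0,1];\mathbb R)$ by Proposition \ref{TS1}, the right-hand side is absolutely continuous, so $S_0^{\alpha}f$ admits an absolutely continuous representative and $\mathcal D_0^\theta f$ exists a.e. Differentiating gives $\mathcal D_0^\theta f(x)=f(0)\,\frac1\alpha E_1\!\big(\frac{x}{\alpha}\big)+\big(S_0^{\alpha}f'\big)(x)$, and writing $S_0^{\alpha}f'$ out explicitly yields precisely \eqref{sim}. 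I expect the main obstacle to be the rigorous justification of absolute continuity of the second term, that is, the reduction of $I_0^{k'_{\alpha}}\big(I_0^{\mathds 1}f'\big)$ to the primitive $\int_0^x \big(S_0^{\alpha}f'\big)(s)\,ds$; this is exactly where the composition law (and the Fubini interchange underlying it) together with the $\delta$-symmetry carry the weight, and where one must keep track of the integrable singularity of $E_1$ at the origin. If one prefers to bypass the second invocation of Theorem \ref{pr2}, the same reduction follows by applying Fubini's theorem directly to the double integral defining the second term.
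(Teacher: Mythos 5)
Your argument is correct, and it reaches the same intermediate identity as the paper, namely
$$
(S_0^{\theta-1}f)(x)=f(0)\int_0^{x/(\theta-1)}E_1(u)\,du+\int_0^x (S_0^{\theta-1}f')(s)\,ds,
$$
but by a genuinely different route for the second term. The paper writes $\int_0^x f'(y)\,dy=H_0^{\theta-1}\bigl(S_0^{\theta-1}f'\bigr)(x)$ via \eqref{r1a}, applies $S_0^{\theta-1}$, and then invokes \eqref{r1a} a second time to collapse $S_0^{\theta-1}H_0^{\theta-1}$; this chain rests on Theorem \ref{COMPHS}, hence on the conjugacy of $k_{\theta-1}$ and $k'_{\theta-1}$ (Proposition \ref{conjpn}) and ultimately on the Laplace-transform computation of Lemma \ref{Laplace}. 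You instead prove the commutation $S_0^{\theta-1}\circ I_0^{\mathds{1}}=I_0^{\mathds{1}}\circ S_0^{\theta-1}$ directly from Theorem \ref{pr2} together with the symmetry $\delta_{k'_{\theta-1},\mathds{1}}=\delta_{\mathds{1},k'_{\theta-1}}$, which holds because both kernels are of convolution type. This buys you independence from the conjugate kernel $k_{\theta-1}$ altogether: your proof of \eqref{sim} never uses $H_0^{\theta-1}$ or Proposition \ref{conjpn}, only Proposition \ref{ex1}, Theorem \ref{pr2}, Proposition \ref{TS1}, and the integrability of $E_1$ near the origin (needed to verify $k'_{\theta-1}\,\mathcal{R}\,\mathds{1}$, which you correctly flag). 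The paper's version, by contrast, recycles machinery it has already built and would generalize to non-convolution conjugate pairs. Your handling of the constant term agrees with the paper's after the identity $\int_0^X E_1(u)\,du=XE_1(X)-e^{-X}+1$, and the final differentiation step is the same. No gaps.
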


\begin{proof}
Let $f\in AC([0,1];\mathbb{R})$. Then
$$
f(x)=f(0)+\int_0^x \frac{df}{dy}(y)\,dy,\quad 0\leq x\leq 1.
$$
Using \eqref{r1a}, we obtain
$$
f(x)=f(0)+H_0^{\theta-1}\left(S_0^{\theta-1} \frac{df}{dx}\right)(x),\quad \mbox{a.e. } x\in [0,1],
$$
which yields
$$
(S_0^{\theta-1} f)(x)=(S_0^{\theta-1} f(0))(x)+S_0^{\theta-1} H_0^{\theta-1}\left(S_0^{\theta-1} \frac{df}{dx}\right)(x),\quad \mbox{a.e. } x\in [0,1].
$$
On the other hand, after some calculations, we obtain
$$
\left(S_0^{\theta-1} f(0)\right)(x)=f(0)\left[\left(\frac{x}{\theta-1} \right)E_1\left(\frac{x}{\theta-1}\right)-e^{\frac{-x}{\theta-1}}+1\right],\quad \mbox{ a.e. } x\in [0,1].
$$
Therefore,
$$
(S_0^{\theta-1} f)(x)=f(0)\left[\left(\frac{x}{\theta-1} \right)E_1\left(\frac{x}{\theta-1}\right)-e^{\frac{-x}{\theta-1}}+1\right]+S_0^{\theta-1} H_0^{\theta-1}\left(S_0^{\theta-1} \frac{df}{dx}\right)(x),\quad \mbox{a.e. } x\in [0,1].
$$
Again, using \eqref{r1a}, we obtain
$$
(S_0^{\theta-1} f)(x)=f(0)\left[\left(\frac{x}{\theta-1} \right)E_1\left(\frac{x}{\theta-1}\right)-e^{\frac{-x}{\theta-1}}+1\right]+\int_0^x \left(S_0^{\theta-1} \frac{df}{dy}\right)(y)\,dy,\quad \mbox{a.e. } x\in [0,1].
$$
Taking the derivative with respect to $x$, \eqref{sim} follows.
\end{proof}

Similarly, we have the following result for the right-sided fractional derivative $\mathcal{D}_1^{\theta}$.

\begin{theorem}\label{TYYRGb}
Let $\theta>1$. If $f\in AC([0,1];\mathbb{R})$, then $\mathcal{D}_1^\theta f$ exists almost everywhere in $[0,1]$, and can be represented in the form
$$
\left(\mathcal{D}_1^\theta f\right)(x)=f(1) \left[\frac{1}{\theta-1} E_1\left(\frac{1-x}{\theta-1}\right)\right]-\int_x^1 \frac{1}{\theta-1}E_1\left(\frac{y-x}{\theta-1}\right)\frac{df}{dy}(y)\,dy
,\quad \mbox{ a.e. } x\in [0,1].
$$
\end{theorem}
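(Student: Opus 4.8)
The plan is to mirror the proof of Theorem \ref{TYYRG}, but anchored at the right endpoint $x=1$. Since $f\in AC([0,1];\mathbb{R})$, I would start from the representation $f(x)=f(1)-\int_x^1 \frac{df}{dy}(y)\,dy$ for $0\le x\le 1$. Applying the composition identity \eqref{r2q} with $\alpha=\theta-1$ in the form $H_1^{\theta-1}(S_1^{\theta-1}g)(x)=\int_x^1 g(y)\,dy$, the tail integral becomes $H_1^{\theta-1}\big(S_1^{\theta-1}\tfrac{df}{dx}\big)(x)$, so that $f=f(1)-H_1^{\theta-1}\big(S_1^{\theta-1}\tfrac{df}{dx}\big)$ almost everywhere. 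I would then apply $S_1^{\theta-1}$ to both sides and invoke \eqref{r2q} a second time, now as $S_1^{\theta-1}H_1^{\theta-1}g(x)=\int_x^1 g(y)\,dy$, to reach the intermediate formula $(S_1^{\theta-1}f)(x)=(S_1^{\theta-1}f(1))(x)-\int_x^1 \big(S_1^{\theta-1}\tfrac{df}{dy}\big)(y)\,dy$.

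The one genuine computation is the action of $S_1^{\theta-1}$ on the constant $f(1)$. Using the definition of $S_1^\alpha$ and the substitution $u=\frac{y-x}{\theta-1}$, this reduces to evaluating $\int_0^X E_1(u)\,du$ with $X=\frac{1-x}{\theta-1}$. From $\frac{d}{du}\big[uE_1(u)\big]=E_1(u)-e^{-u}$ together with $uE_1(u)\to 0$ as $u\to 0^+$, I would obtain $\int_0^X E_1(u)\,du=X E_1(X)+1-e^{-X}$, hence $(S_1^{\theta-1}f(1))(x)=f(1)\big[\frac{1-x}{\theta-1}E_1(\frac{1-x}{\theta-1})+1-e^{-(1-x)/(\theta-1)}\big]$, the exact right-endpoint analogue of the boundary term appearing in Theorem \ref{TYYRG}.

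Since $S_1^{\theta-1}\tfrac{df}{dx}\in L^1([0,1];\mathbb{R})$ by Proposition \ref{TS1}, the integral summand is absolutely continuous, and the explicit boundary summand is smooth on $[0,1)$; thus $S_1^{\theta-1}f$ possesses an absolutely continuous representative, which both guarantees that $\mathcal{D}_1^\theta f=-\frac{d}{dx}(S_1^{\theta-1}f)$ exists almost everywhere and licenses termwise differentiation. Differentiating the intermediate formula, $\frac{d}{dx}\int_x^1 \big(S_1^{\theta-1}\tfrac{df}{dy}\big)(y)\,dy=-\big(S_1^{\theta-1}\tfrac{df}{dx}\big)(x)$, which after the overall sign of $\mathcal{D}_1^\theta$ reproduces $-\int_x^1 \frac{1}{\theta-1}E_1(\frac{y-x}{\theta-1})\frac{df}{dy}(y)\,dy$.

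The only delicate point, and the step I would watch most carefully, is differentiating the boundary term. With $X=\frac{1-x}{\theta-1}$ one has $\frac{d}{dx}\big[X E_1(X)\big]=-\frac{1}{\theta-1}\big(E_1(X)-e^{-X}\big)$, whose $+\frac{1}{\theta-1}e^{-X}$ contribution exactly cancels $\frac{d}{dx}\big[-e^{-X}\big]=-\frac{1}{\theta-1}e^{-X}$, leaving precisely $-\frac{1}{\theta-1}E_1(X)$. After the sign flip in $\mathcal{D}_1^\theta$ this yields the term $f(1)\frac{1}{\theta-1}E_1(\frac{1-x}{\theta-1})$, and combining with the integral contribution gives the stated representation. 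The cancellation of the $e^{-X}$ terms is what makes the formula collapse to the clean expression in the statement, and verifying it is the crux of the argument.
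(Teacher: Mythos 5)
Your proposal is correct and is essentially the paper's intended argument: the paper gives no separate proof of Theorem \ref{TYYRGb}, stating only that it follows ``similarly'' to Theorem \ref{TYYRG}, and your right-endpoint mirroring (the decomposition $f(x)=f(1)-\int_x^1 f'(y)\,dy$, two applications of \eqref{r2q}, the computation $\int_0^X E_1(u)\,du=XE_1(X)+1-e^{-X}$, and the cancellation of the $e^{-X}$ terms upon differentiation) is exactly that adaptation, with all signs handled correctly.
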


Next, using Theorems \ref{appT} and \ref{TYYRG}, we shall prove the following approximation result.

\begin{theorem}\label{fmfg}
Let $f\in AC([0,1];\mathbb{R})$ be such that $f(0)=0$. Then
$$
\lim_{\theta\to 1^+}\left\|\mathcal{D}_0^\theta f -\frac{df}{dx}\right\|_{L^1([0,1];\mathbb{R})}=0.
$$
\end{theorem}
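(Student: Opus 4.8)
The plan is to reduce the statement to the approximation result of Theorem~\ref{appT} by means of the explicit representation of $\mathcal{D}_0^\theta f$ obtained in Theorem~\ref{TYYRG}. The crucial observation is that for an absolutely continuous $f$ the fractional derivative $\mathcal{D}_0^\theta f$ is nothing but the operator $S_0^{\theta-1}$ applied to the ordinary derivative $\frac{df}{dx}$, up to a boundary term that the hypothesis $f(0)=0$ annihilates.

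First, since $f\in AC([0,1];\mathbb{R})$, its classical derivative $\frac{df}{dx}$ belongs to $L^1([0,1];\mathbb{R})$, so $S_0^{\theta-1}\frac{df}{dx}$ is well-defined. Next, I would apply Theorem~\ref{TYYRG}: the representation \eqref{sim} together with the assumption $f(0)=0$ gives
$$
\left(\mathcal{D}_0^\theta f\right)(x)=\int_0^x \frac{1}{\theta-1}E_1\left(\frac{x-y}{\theta-1}\right)\frac{df}{dy}(y)\,dy,\quad \mbox{ a.e. } x\in[0,1].
$$
Comparing with the definition of the left-sided fractional integral of type (II), the right-hand side is exactly $\left(S_0^{\theta-1}\frac{df}{dx}\right)(x)$; hence
$$
\mathcal{D}_0^\theta f=S_0^{\theta-1}\frac{df}{dx}\quad\mbox{ in }\,L^1([0,1];\mathbb{R}).
$$

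Finally, setting $\alpha=\theta-1$, the limit $\theta\to 1^+$ is equivalent to $\alpha\to 0^+$, and I would invoke Theorem~\ref{appT} with $g=\frac{df}{dx}\in L^1([0,1];\mathbb{R})$ to conclude
$$
\lim_{\theta\to 1^+}\left\|\mathcal{D}_0^\theta f-\frac{df}{dx}\right\|_{L^1([0,1];\mathbb{R})}=\lim_{\alpha\to 0^+}\left\|S_0^\alpha\frac{df}{dx}-\frac{df}{dx}\right\|_{L^1([0,1];\mathbb{R})}=0.
$$
There is no genuine analytic obstacle once the identification $\mathcal{D}_0^\theta f=S_0^{\theta-1}\frac{df}{dx}$ is made: the entire difficulty has already been absorbed into Theorem~\ref{appT}, whose proof rests on recognizing $\{G_\alpha\}_{\alpha>0}$ as an approximate identity. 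The only point deserving care is the clean cancellation of the boundary term in \eqref{sim}, which is precisely where the hypothesis $f(0)=0$ is used; without it one would be left with the extra contribution $f(0)\frac{1}{\theta-1}E_1\!\left(\frac{x}{\theta-1}\right)$, which concentrates at the origin and does not vanish in $L^1$ as $\theta\to 1^+$.
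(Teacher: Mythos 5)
Your proof is correct and follows exactly the paper's own argument: use the representation from Theorem~\ref{TYYRG} together with $f(0)=0$ to identify $\mathcal{D}_0^\theta f$ with $S_0^{\theta-1}\frac{df}{dx}$, then conclude by the approximate-identity result of Theorem~\ref{appT} with $\alpha=\theta-1\to 0^+$. Your closing remark about why the boundary term must vanish is a nice addition, but the core reasoning is identical to the paper's.
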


\begin{proof}
Let $f\in AC([0,1];\mathbb{R})$ be such that $f(0)=0$. Using Theorem \ref{TYYRG}, we obtain
$$
\left\|\mathcal{D}_0^\theta f -\frac{df}{dx}\right\|_{L^1([0,1];\mathbb{R})}=\left\|S_0^{\theta-1} \frac{df}{dx}-\frac{df}{dx}\right\|_{L^1([0,1];\mathbb{R})},\quad \theta>1.
$$
Passing to the limit as $\theta\to 1^+$ and using Theorem \ref{appT}, the desired result follows.
\end{proof}

Similarly, using Theorems \ref{appT2} and \ref{TYYRGb}, we obtain the following  result.

\begin{theorem}\label{fmfgb}
Let $f\in AC([0,1];\mathbb{R})$ be such that $f(1)=0$. Then
$$
\lim_{\theta\to 1^+}\left\|\mathcal{D}_1^\theta f -\left(-\frac{df}{dx}\right)\right\|_{L^1([0,1];\mathbb{R})}=0.
$$
\end{theorem}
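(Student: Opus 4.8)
The plan is to mirror exactly the argument used for Theorem \ref{fmfg}, replacing the left-sided ingredients by their right-sided counterparts. First I would invoke the representation formula from Theorem \ref{TYYRGb}: since $f\in AC([0,1];\mathbb{R})$, the derivative $\mathcal{D}_1^\theta f$ exists almost everywhere and
$$
\left(\mathcal{D}_1^\theta f\right)(x)=f(1)\left[\frac{1}{\theta-1}E_1\left(\frac{1-x}{\theta-1}\right)\right]-\int_x^1 \frac{1}{\theta-1}E_1\left(\frac{y-x}{\theta-1}\right)\frac{df}{dy}(y)\,dy.
$$
The crucial observation is that the integral on the right is precisely $\left(S_1^{\theta-1}\frac{df}{dx}\right)(x)$, by the very definition of the right-sided fractional integral of type (II) with parameter $\alpha=\theta-1$ applied to $\frac{df}{dx}$.

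Next I would use the hypothesis $f(1)=0$ to annihilate the boundary term, leaving
$$
\left(\mathcal{D}_1^\theta f\right)(x)=-\left(S_1^{\theta-1}\frac{df}{dx}\right)(x),\quad \text{a.e. } x\in[0,1].
$$
Consequently, for every $\theta>1$,
$$
\left\|\mathcal{D}_1^\theta f-\left(-\frac{df}{dx}\right)\right\|_{L^1([0,1];\mathbb{R})}=\left\|S_1^{\theta-1}\frac{df}{dx}-\frac{df}{dx}\right\|_{L^1([0,1];\mathbb{R})},
$$
where the two minus signs cancel. Since $f\in AC([0,1];\mathbb{R})$, its derivative $\frac{df}{dx}$ belongs to $L^1([0,1];\mathbb{R})$, so Theorem \ref{appT2} applies to $g=\frac{df}{dx}$; letting $\theta\to 1^+$ is the same as letting $\alpha=\theta-1\to 0^+$, and the right-hand side tends to $0$. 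This yields the claim.

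The argument is essentially routine once the representation of Theorem \ref{TYYRGb} is in hand, and I do not anticipate a genuine obstacle. The one point that requires care is the sign bookkeeping: the right-sided derivative $\mathcal{D}_1^\theta$ carries a built-in minus sign (inherited from $D_b^{k'}$), and the statement compares it to $-\frac{df}{dx}$ rather than $\frac{df}{dx}$, so one must verify that these two sign conventions line up so that the difference collapses to $S_1^{\theta-1}\frac{df}{dx}-\frac{df}{dx}$ (up to an overall sign that is irrelevant inside the norm) before invoking the approximation result Theorem \ref{appT2}.
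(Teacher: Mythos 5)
Your proposal is correct and follows exactly the route the paper intends: the paper proves Theorem \ref{fmfgb} by the same one-line reduction, invoking the representation of Theorem \ref{TYYRGb} with $f(1)=0$ to identify $\mathcal{D}_1^\theta f$ with $-S_1^{\theta-1}\frac{df}{dx}$ and then applying the approximate-identity result of Theorem \ref{appT2}. Your sign bookkeeping is right, since the difference equals $-\left(S_1^{\theta-1}\frac{df}{dx}-\frac{df}{dx}\right)$ and the overall sign disappears inside the $L^1$ norm.
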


We end this section with the following  fractional integration by parts rule. Just before, let us introduce the following functional spaces. Given $\alpha>0$, we say that $f\in H_0^\alpha(L^1)$ if there exists $\varphi\in L^1([0,1];\mathbb{R})$ such that $f(x)=(H_0^\alpha \varphi)(x)$, a.e. $x\in [0,1]$. We say that $f\in H_1^\alpha(L^\infty)$ if there exists $\varphi\in L^\infty([0,1];\mathbb{R})$ such that $f(x)=(H_1^\alpha \varphi)(x)$, a.e. $x\in [0,1]$.

\begin{theorem}\label{RIPGD}
Let $\theta>1$. If $f\in H_1^{\theta-1}(L^\infty)$ and $g\in H_0^{\theta-1}(L^1)$, then
$$
\int_0^1 f(x) \left(\mathcal{D}_0^\theta g\right)(x)\,dx=\int_0^1 \left(\mathcal{D}_1^\theta f\right)(x)g(x)\,dx.
$$
\end{theorem}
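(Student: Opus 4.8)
The plan is to reduce both fractional derivatives to the $L^1$- and $L^\infty$-densities that generate $g$ and $f$, and then to invoke the integration by parts rule for type (I) fractional integrals already proved in Theorem \ref{CHT}. By the definitions of the spaces $H_0^{\theta-1}(L^1)$ and $H_1^{\theta-1}(L^\infty)$, I would first write $g=H_0^{\theta-1}\psi$ for some $\psi\in L^1([0,1];\mathbb{R})$ and $f=H_1^{\theta-1}\varphi$ for some $\varphi\in L^\infty([0,1];\mathbb{R})$.

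The key step is to evaluate the two derivatives explicitly. Using the identifications $H_i^{\theta-1}=I_i^{k_{\theta-1}}$ from \eqref{obs1H} and $\mathcal{D}_i^\theta=D_i^{k'_{\theta-1}}$ from \eqref{derv1}--\eqref{derv2}, together with the conjugacy $k_{\theta-1}\in\mbox{conj}(k'_{\theta-1})$ guaranteed by Proposition \ref{conjpn}, I can apply the inversion relations of Theorem \ref{TN1}. Indeed, since $g=I_0^{k_{\theta-1}}\psi$, relation \eqref{inv1} gives $\mathcal{D}_0^\theta g=D_0^{k'_{\theta-1}}(I_0^{k_{\theta-1}}\psi)=\psi$; and since $f=I_1^{k_{\theta-1}}\varphi$, relation \eqref{inv1b} gives $\mathcal{D}_1^\theta f=D_1^{k'_{\theta-1}}(I_1^{k_{\theta-1}}\varphi)=\varphi$. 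Note that Theorem \ref{TN1} applies to every $L^1$ function, so no separate verification that the relevant $S_i^{\theta-1}$-integrals admit absolutely continuous representatives is needed: this is built into that statement, and ultimately stems from the composition $I^{k'_{\theta-1}}I^{k_{\theta-1}}$ collapsing to a plain integral via conjugacy.

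With these two identities in hand, the left-hand side becomes $\int_0^1 f\,(\mathcal{D}_0^\theta g)\,dx=\int_0^1 (H_1^{\theta-1}\varphi)\,\psi\,dx$, while the right-hand side becomes $\int_0^1 (\mathcal{D}_1^\theta f)\,g\,dx=\int_0^1 \varphi\,(H_0^{\theta-1}\psi)\,dx$. These two expressions coincide by Theorem \ref{CHT} applied with $\alpha=\theta-1$, placing $\psi\in L^1([0,1];\mathbb{R})$ in the $L^1$ slot and $\varphi\in L^\infty([0,1];\mathbb{R})$ in the $L^\infty$ slot. The only point requiring attention, and the main (if modest) obstacle, is bookkeeping: correctly matching the integrability roles in Theorem \ref{CHT} so that the $L^1$ density is the argument of $H_0^{\theta-1}$ and the $L^\infty$ density the argument of $H_1^{\theta-1}$, and confirming that the representation hypotheses defining $H_0^{\theta-1}(L^1)$ and $H_1^{\theta-1}(L^\infty)$ feed exactly into the inversion statement of Theorem \ref{TN1}.
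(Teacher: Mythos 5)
Your proposal is correct and follows essentially the same route as the paper: decompose $f=H_1^{\theta-1}\varphi$ and $g=H_0^{\theta-1}\psi$, identify $\mathcal{D}_1^\theta f=\varphi$ and $\mathcal{D}_0^\theta g=\psi$ via the inversion relation (the paper cites its specialization, Theorem \ref{NFDI}, of Theorem \ref{TN1}), and conclude by the type (I) integration by parts rule of Theorem \ref{CHT}. The bookkeeping of the $L^1$ and $L^\infty$ roles matches the paper's argument exactly.
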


\begin{proof}
Let $f\in H_1^{\theta-1}(L^\infty)$ and $g\in H_0^{\theta-1}(L^1)$. By the definition of the functional space $H_1^{\theta-1}(L^\infty)$, there exists a certain function $\varphi_f\in L^\infty([0,1];\mathbb{R})$ such that 
\begin{equation}\label{eqff}
f(x)=\left(H_1^{\theta-1} \varphi_f\right)(x),\quad \mbox{ a.e. } x\in [0,1].
\end{equation}
Similarly, by the definition of the functional space $H_0^{\theta-1}(L^1)$, there exists a certain function $\varphi_g\in L^1([0,1];\mathbb{R})$ such that 
\begin{equation}\label{eqgg}
g(x)=\left(H_0^{\theta-1} \varphi_g\right)(x),\quad \mbox{ a.e. } x\in [0,1].
\end{equation}
Further, using \eqref{eqff} and Theorem \ref{NFDI}, we obtain
\begin{equation}\label{eqff2}
\varphi_f(x)=(\mathcal{D}_1^\theta f)(x),\quad \mbox{a.e. } x\in [0,1].
\end{equation}
Using \eqref{eqgg} and Theorem \ref{NFDI}, we obtain
\begin{equation}\label{eqgg2}
(\mathcal{D}_0^\theta g)(x)=\varphi_g(x),\quad \mbox{a.e. } x\in [0,1].
\end{equation}
Next, using \eqref{eqff} and \eqref{eqgg2}, we have
$$
\int_0^1 f(x) \left(\mathcal{D}_0^\theta g\right)(x)\,dx=\int_0^1 \left(H_1^{\theta-1} \varphi_f\right)(x) \varphi_g(x)\,dx.
$$
Applying Theorem \ref{CHT}, we obtain
\begin{equation}\label{ouf}
\int_0^1 f(x) \left(\mathcal{D}_0^\theta g\right)(x)\,dx=\int_0^1 (H_0^{\theta-1} \varphi_g)(x) \varphi_f(x)\,dx.
\end{equation}
Finally, using \eqref{eqgg}, \eqref{eqff2} and \eqref{ouf}, the desired result follows.
\end{proof}

\section{An existence result for a boundary value problem involving $k'$-derivative}\label{sec6}

In this section, we discuss the solvability of a boundary value problem  involving $k'$-derivative. More precisely, we are concerned with the boundary value problem
\begin{equation}\label{bvp}
\begin{aligned}
&(D_0^{k'}u)(t)=f(t,u(t)),\quad \mbox{ a.e. } t\in [0,1],\\
&(I_0^{k'}u)(0)=0,
\end{aligned}
\end{equation} 
where $(k,k')\in \mathcal{K}_\omega\times \mathcal{K}_\omega$ (with $[a,b]=[0,1]$) are two conjugate kernels, $u\in C([0,1];\mathbb{R})$ is such that $I_0^{k'}u$ admits an absolutely continuous representative, and $f: [0,1]\times \mathbb{R}\to \mathbb{R}$ is a continuous function. 

The functional space $C([0,1];\mathbb{R})$ is equipped with the Chebyshev norm
$$
\|y\|_\infty=\max\{|y(t)|:\, 0\leq t\leq 1\},\quad y\in C([0,1];\mathbb{R}).
$$

Problem \eqref{bvp} is investigated under the following assumptions:
\begin{itemize}
\item[(A1)] $I_0^k \left(C([0,1];\mathbb{R})\right)\subset C([0,1];\mathbb{R})$, i.e., $I_0^k y\in C([0,1];\mathbb{R})$, for all $y\in C([0,1];\mathbb{R})$.
\item[(A2)] For all $(t,x_1,x_2)\in [0,1]\times \mathbb{R}\times \mathbb{R}$, we have 
$$
|f(t,x_1)-f(t,x_2)|\leq c_f |x_1-x_2|,
$$
where $c_f$ is a positive constant such that $c_f\|I_0^k 1\|_\infty<1$.
\end{itemize}

Suppose now that $u\in C([0,1];\mathbb{R})$ is a solution to \eqref{bvp}. Using Theorem \ref{TN3}, we obtain 
$$
u(t)=\left(I_0^k f(\cdot,u(\cdot))\right)(t),\quad \mbox{ a.e. } t\in [0,1].
$$
By continuity, we may identify both functions appearing in the above equation, so that we get
\begin{equation}\label{bvp2}
u(t)=\left(I_0^k f(\cdot,u(\cdot))\right)(t),\quad t\in [0,1].
\end{equation}
Conversely, suppose that $u\in C([0,1];\mathbb{R})$ is a solution to \eqref{bvp2}. By Theorem \ref{pr2}, we have
$$
(I_0^{k'}u)(t)=\left(I_0^{\delta_{k',k}}f(\cdot,u(\cdot))\right)(t),\quad \mbox{ a.e. } t\in [0,1].
$$
Since $k$ and $k'$ are two conjugate kernels, we obtain
$$
(I_0^{k'}u)(t)=\int_0^t f(s,u(s))\omega(s)\,ds,\quad \mbox{ a.e. } t\in [0,1],
$$
which yields
\begin{eqnarray*}
(D_0^{k'}u)(t)&=&\left(\frac{1}{\omega(t)} \frac{d}{dt}\right)\int_0^t f(s,u(s))\omega(s)\,ds\\
&=& f(t,u(t)),\quad \mbox{ a.e. } t\in [0,1].
\end{eqnarray*}
Moreover, by identification of $I_0^{k'}u$ with its absolutely continuous representative 
$$
t\mapsto \int_0^t f(s,u(s))\omega(s)\,ds,
$$
we have 
$$
(I_0^{k'}u)(0)=0.
$$
Therefore, $u\in C([0,1];\mathbb{R})$ is a solution to \eqref{bvp}.

Consequently,  we deduce that solving \eqref{bvp} is equivalent to solve \eqref{bvp2} in $C([0,1];\mathbb{R})$. 
Next, let us introduce the mapping $T: C([0,1];\mathbb{R})\to C([0,1];\mathbb{R})$ given by
$$
(Tu)(t)=\left(I_0^k f(\cdot,u(\cdot))\right)(t),\quad u\in C([0,1];\mathbb{R}),\, t\in [0,1].
$$
ote that from (A1) and the continuity of $f$, the mapping $T$ is well-defined. Further, let us fix $(u,v)\in C([0,1];\mathbb{R})\times C([0,1];\mathbb{R})$.  Taking into account 
assumptions (A1) and (A2), for all $t\in [0,1]$, we have
\begin{eqnarray*}
|(Tu)(t)-(Tv)(t)| &=& \left|\left(I_0^k f(\cdot,u(\cdot))\right)(t)-\left(I_0^k f(\cdot,v(\cdot))\right)(t)\right|\\
&=&\left|\int_0^t k(t,s)f(s,u(s))\omega(s)\,ds-\int_0^t k(t,s)f(s,v(s))\omega(s)\,ds\right|\\
&=&\left|\int_0^t k(t,s) \left(f(s,u(s))-f(s,v(s))\right)\omega(s)\,ds\right|\\
&\leq & c_f \left(\int_0^t k(t,s) \omega(s)\,ds\right) \|u-v\|_\infty\\
&=& c_f (I_0^k1)(t)\|u-v\|_\infty\\
&\leq & C \|u-v\|_\infty,
\end{eqnarray*}
where $C=c_f \|I_0^k1\|_\infty$. Hence, we obtain
$$
\|Tu-Tv\|_\infty\leq C \|u-v\|_\infty,\quad (u,v)\in C([0,1];\mathbb{R})\times C([0,1];\mathbb{R}).
$$
Moreover, taking in consideration assumption (A2), we have $0\leq C<1$. Finally, Banach contraction principle implies that the mapping $T$ has one and only one fixed point $u^*\in C([0,1];\mathbb{R})$, which is the unique solution to \eqref{bvp}. Hence, the following result holds.

\begin{theorem}\label{TBVP}
Under  assumptions (A1) and (A2),  \eqref{bvp} admits one and only one solution $u^*\in C([0,1];\mathbb{R})$.
\end{theorem}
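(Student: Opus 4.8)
The plan is to convert the boundary value problem \eqref{bvp} into a fixed-point equation for an integral operator and then apply the Banach contraction principle. The decisive preliminary step is to show that $u\in C([0,1];\mathbb{R})$ solves \eqref{bvp} if and only if it solves the integral equation
$$
u(t)=\left(I_0^k f(\cdot,u(\cdot))\right)(t),\quad t\in [0,1].
$$
For the forward implication I would start from a solution $u$ of \eqref{bvp} and apply Theorem \ref{TN3} to $I_0^k\left(D_0^{k'}u\right)$; the boundary condition $(I_0^{k'}u)(0)=0$ is exactly what kills the correction term involving $\delta_{k,\mathds{1}}(\cdot,a)$, leaving $u=I_0^k f(\cdot,u(\cdot))$ almost everywhere, which I then promote to an identity on all of $[0,1]$ by continuity (using assumption (A1)). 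For the converse, starting from a solution of the integral equation, I would apply $I_0^{k'}$ and invoke Theorem \ref{pr2} together with the conjugacy of $k$ and $k'$ (so that $\delta_{k',k}\equiv 1$) to reduce $I_0^{k'}u$ to $\int_0^t f(s,u(s))\omega(s)\,ds$; differentiating via $\left(\frac{1}{\omega}\frac{d}{dt}\right)$ recovers the equation $D_0^{k'}u=f(\cdot,u)$, and the boundary condition is read off directly from the antiderivative. This establishes that solving \eqref{bvp} is equivalent to finding a fixed point of the map $T:C([0,1];\mathbb{R})\to C([0,1];\mathbb{R})$ defined by $(Tu)(t)=\left(I_0^k f(\cdot,u(\cdot))\right)(t)$, which is well-defined as a self-map by (A1) and the continuity of $f$.

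The second main step is the contraction estimate. For $(u,v)\in C([0,1];\mathbb{R})\times C([0,1];\mathbb{R})$ and $t\in[0,1]$, I would use the Lipschitz bound from (A2) to pull the constant $c_f$ out of the kernel integral, obtaining
$$
|(Tu)(t)-(Tv)(t)|\leq c_f\left(\int_0^t k(t,s)\omega(s)\,ds\right)\|u-v\|_\infty = c_f\,(I_0^k 1)(t)\,\|u-v\|_\infty.
$$
Taking the supremum over $t$ yields $\|Tu-Tv\|_\infty\leq C\|u-v\|_\infty$ with $C=c_f\|I_0^k 1\|_\infty$, and the hypothesis $c_f\|I_0^k 1\|_\infty<1$ in (A2) guarantees $0\leq C<1$. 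The Banach contraction principle then delivers a unique fixed point $u^*\in C([0,1];\mathbb{R})$, which by the equivalence above is the unique solution of \eqref{bvp}.

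I expect the main obstacle to be the equivalence between the differential and integral formulations rather than the contraction argument, which is routine once the estimate is set up. The delicate point is the correct use of Theorem \ref{TN3}: the output of $I_0^k\circ D_0^{k'}$ is not simply $f$ but $f$ minus a boundary term weighted by $(I_0^{k'}f)(a)$, and the reduction goes through precisely because the prescribed condition $(I_0^{k'}u)(0)=0$ cancels that term. One must also be careful that the inversion theorems hold only almost everywhere, so the passage to genuine equality on $[0,1]$ relies essentially on the continuity hypothesis embedded in (A1), namely that $I_0^k$ maps $C([0,1];\mathbb{R})$ into itself.
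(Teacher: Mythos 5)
Your proposal is correct and follows essentially the same route as the paper: the equivalence of \eqref{bvp} with the integral equation $u=I_0^k f(\cdot,u(\cdot))$ (forward direction via Theorem \ref{TN3}, where the condition $(I_0^{k'}u)(0)=0$ cancels the correction term; converse via Theorem \ref{pr2} and the conjugacy of $k$ and $k'$), followed by the contraction estimate with constant $C=c_f\|I_0^k 1\|_\infty<1$ and the Banach fixed point theorem. No gaps to report.
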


\bibliographystyle{amsplain}

\end{document}